\def\blx@err@patch#1{}
\pgfplotsset{width=10cm,compat=1.9}
\theoremstyle{plain}
\newtheorem{Definition}{Definition}[section]
\newtheorem{Remark}[Definition]{Remark}
\newtheorem{Theorem}[Definition]{Theorem}
\newtheorem{Lemma}[Definition]{Lemma}
\newtheorem{Proposition}[Definition]{Proposition}
\newtheorem{Algorithm}[Definition]{Algorithm}
\newtheorem{Corollary}[Definition]{Corollary}
\begin{document}

\title{A posteriori existence for the Keller-Segel model via a finite volume - finite element scheme}
\author{Marc Hoffmann\thanks{corresponding author; hoffmann@mathematik.tu-darmstadt.de}, Jan Giesselmann\thanks{giesselmann@mathematik.tu-darmstadt.de}} %optionales Argument ist die Signatur,
\affil{Department of Mathematics, Technical University of Darmstadt, \\ Dolivostr. 15, 64293 Darmstadt, Germany}
\date{}

\maketitle
\begin{abstract}
    \noindent We derive two forms of conditional \emph{a posteriori} error estimates for a finite volume scheme approximating the parabolic-elliptic Keller-Segel system. 
    The estimates control the error in the $L^\infty(0,T, L^2(\Omega))$- and $L^2(0,T;H^1(\Omega))$-norm and exhibit linear convergence in the mesh size, as observed in numerical experiments. 
    Crucially, we show that, as long as the condition of the error estimate is satisfied, a  weak solution exists. 
    This means, as long as the numerical solution has good properties, we can rigorously infer existence of an exact solution.
    %Crucially, we show that these error bounds can be used to rigorously verify the existence of weak solutions—up to a prescribed time horizon—based solely on numerical simulations.
\end{abstract}

\noindent \textbf{Keywords:} conditional a posteriori error estimates, finite time blow-up, existence of solutions, finite volume scheme \\
\textbf{AMS subject classifications:} 65M15, 65M08, 35K40 

%\tableofcontents
%\listoffigures

% -- Abkuerzungen neuer mathematischer Befehle --
\newcommand{\D}{\mathcal{D}}               % Definitionsbereich
\newcommand{\K}{\mathbb{K}}                % Koerper
\newcommand{\R}{\mathbb{R}}                % reelle Zahlen
\newcommand{\Q}{\mathbb{Q}}                % rationale Zahlen
\newcommand{\Z}{\mathbb{Z}}                % ganze Zahlen
\newcommand{\N}{\mathbb{N}}                % natuerliche Zahlen
\newcommand{\F}{\mathcal{F}}               % Fourier Transformation
\renewcommand{\S}{\mathcal{S}}             % Schwartz Raum
\newcommand{\SO}{\mathrm{SO}}              % special orthogonal group
\newcommand{\SE}{\mathrm{SE}}              % special euclidean group
\newcommand{\GL}{\mathrm{GL}}              % general linear group
\newcommand{\so}{\mathfrak{so}}            % special orthogonal group
\newcommand{\se}{\mathfrak{se}}            % special euclidean group
\renewcommand{\Re}{\mathrm{Re}\,}          % Realteil
\renewcommand{\Im}{\mathrm{Im}\,}          % Imaginaerteil
\providecommand{\trace}{\mathrm{tr}}           % Trace/Spur
\renewcommand{\L}{\mathcal{L}}             % L Operator
\newcommand{\A}{\mathcal{A}}               % L Operator
\newcommand{\Null}{\mathcal{N}}            % kernel, null space
\newcommand{\Range}{\mathcal{R}}           % range
\newcommand{\sech}{\mathrm{sech}}          % secans hyperbolicus
\newcommand{\erf}{\mathrm{erf}}            % Error function 
\newcommand{\diag}{\mathrm{diag}}          % Diagonalmatrix
\newcommand{\E}{\mbox{I\negthinspace E}}   % Erwartungswert
\newcommand{\supp}{\mathrm{supp}}          % Traeger, support
\newcommand{\one}{\mathbbm{1}}             % Indikatorfunktion
\newcommand{\n}{\mathrm{\textbf{n}}}       % Normalenvektor,
\newcommand{\orbit}{\mathcal{O}}           % Orbit
\newcommand{\rank}{\mathrm{rank}}          % Rank of a matrix
\newcommand{\amin}{a_{\mathrm{min}}} 
\newcommand{\amax}{a_{\mathrm{max}}} 
\newcommand{\azero}{a_0} 
\newcommand{\czero}{c_0} 
\newcommand{\dzero}{d_0}
\newcommand{\Imax}{I_{\mathrm{max}}}

\providecommand{\rd}{{\operatorname{rd}}}
\providecommand{\cond}{{\operatorname{cond}}}
\providecommand{\weight}{{\psi}} 
\providecommand{\RRm}{\ensuremath{\setR^m}}
\providecommand{\setR}{\ensuremath{\mathcal{R}}}
\providecommand{\Rmn}{\setR^{m \times n}}
\providecommand{\Rnn}{\setR^{n \times n}}
\providecommand{\Rmm}{\setR^{m \times m}}
\providecommand{\Cn}{\setC^n}
\providecommand{\Cm}{\setC^m}
\providecommand{\Cmn}{\setC^{m \times n}}
\providecommand{\Cnn}{\setC^{n \times n}}
\providecommand{\Cmm}{\setC^{m \times m}}
\providecommand{\Kn}{\ensuremath{\setK^n}}
\providecommand{\Km}{\ensuremath{\setK^m}}
\providecommand{\Kmn}{\ensuremath{\setK^{m \times n}}}
\providecommand{\Kmm}{\ensuremath{\setK^{m \times m}}}
\providecommand{\Knn}{\setK^{n \times n}}
\providecommand{\rank}{\textup{rang}}
\providecommand{\notexam}{\faMagic}
\providecommand{\notfivecp}{\faGear}
\def\RR{\mathbb{R}}
\def\CC{\mathbb{C}}
\def\NN{\mathbb{N}}
\def\ZZ{\mathbb{Z}}

\def\dt{\,\mathrm{d}t}
\def\dx{\,\mathrm{d}x}
\def\ds{\,\mathrm{d}s}
\def\dr{\,\mathrm{d}r}

% \def\y{\mathbf{y}}
% \def\z{\mathbf{z}}
% \def\f{\mathbf{f}}
% \def\g{\mathbf{g}}
% \def\r{\mathbf{r}}
% \def\b{\mathbf{b}}
% \def\c{\mathbf{c}}
% \def\d{\mathbf{d}}
% \def\k{\mathbf{k}}
% \def\e{\mathbf{e}}
% \def\u{\mathbf{u}}
% \def\v{\mathbf{v}}
% \def\w{\mathbf{w}}
% \def\x{\mathbf{x}}
% \def\p{\mathbf{p}}
% \def\h{\mathbf{h}}
% \def\A{\mathbf{A}}
% \def\B{\mathbf{B}}
% \def\C{\mathbf{C}}
% \def\D{\mathbf{D}}
% \def\E{\mathbf{E}}
% %\def\F{\mathbf{F}}
% \def\J{\mathbf{J}}
% \def\I{\mathbf{I}}
% %\def\R{\mathbf{R}}
% %\def\S{\mathbf{S}}
% \def\U{\mathbf{U}}
% \def\V{\mathbf{V}}
% \def\W{\mathbf{W}}
% \def\Y{\mathbf{Y}}
% \def\Lam{\boldsymbol{\Lambda}}
% \def\b{\mathbf{b}}
% \def\zero{\mathbf{0}}

\def\bspsi{\boldsymbol{\psi}}
\def\bbeta{\boldsymbol{\beta}}
\def\bgamma{\boldsymbol{\gamma}}
\def\M{\mathcal{M}}
\def\L{\mathcal{L}}
\def\T{\mathcal{T}}
\def\Z{\mathcal{Z}}

\def\eps{\epsilon}
\def\Re{\text{Re}}
\def\Im{\text{Im}}
\def\tint{{\textstyle \int}}
\def\tnorm{|\!|\!|}

%% differential operators 
\def\divergence{\mathrm{div}}

%% norms 

  \providecommand{\normtmp}[2]{{#1\lVert #2 #1\rVert}}
\providecommand{\norm}[1]{\normtmp{}{#1}}
\providecommand{\bignorm}[1]{\normtmp{\big}{#1}}
\providecommand{\Bignorm}[1]{\normtmp{\Big}{#1}}
\providecommand{\biggnorm}[1]{\normtmp{\bigg}{#1}}
\providecommand{\Biggnorm}[1]{\normtmp{\Bigg}{#1}}

\providecommand{\normmtmp}[2]{{#1\lvert\hspace{-0.07em}#1\lvert\hspace{-0.07em}#1\lvert{#2}
		#1\rvert\hspace{-0.07em}#1\rvert\hspace{-0.07em}#1\rvert}}  
\providecommand{\normm}[1]{\normmtmp{}{#1}}
\providecommand{\bignormm}[1]{\normmtmp{\big}{#1}}
\providecommand{\Bignormm}[1]{\normmtmp{\Big}{#1}}
\providecommand{\biggnormm}[1]{\normmtmp{\bigg}{#1}}
\providecommand{\Biggnormm}[1]{\normmtmp{\Bigg}{#1}}

\providecommand{\abstmp}[2]{{#1\lvert #2 #1\rvert}}
\providecommand{\abs}[1]{\abstmp{}{#1}}
\providecommand{\bigabs}[1]{\abstmp{\big}{#1}}
\providecommand{\Bigabs}[1]{\abstmp{\Big}{#1}}
\providecommand{\biggabs}[1]{\abstmp{\bigg}{#1}}
\providecommand{\Biggabs}[1]{\abstmp{\Bigg}{#1}}

\providecommand{\skptmp}[3]{{\ensuremath{#1\langle {#2}, {#3} #1\rangle}}}
\providecommand{\skp}[2]{\skptmp{}{#1}{#2}}
\providecommand{\bigskp}[2]{\skptmp{\big}{#1}{#2}}
\providecommand{\Bigskp}[2]{\skptmp{\Big}{#1}{#2}}
\providecommand{\biggskp}[2]{\skptmp{\bigg}{#1}{#2}}
\providecommand{\Biggskp}[2]{\skptmp{\Bigg}{#1}{#2}}

\providecommand{\hskptmp}[3]{{\ensuremath{#1( {#2}, {#3} #1)}}}
\providecommand{\hskp}[2]{\hskptmp{}{#1}{#2}}
\providecommand{\bighskp}[2]{\hskptmp{\big}{#1}{#2}}
\providecommand{\Bighskp}[2]{\hskptmp{\Big}{#1}{#2}}
\providecommand{\bigghskp}[2]{\hskptmp{\bigg}{#1}{#2}}
\providecommand{\Bigghskp}[2]{\hskptmp{\Bigg}{#1}{#2}}

\providecommand{\settmp}[2]{{#1\{{#2}#1\}}}
\providecommand{\set}[1]{\settmp{}{#1}}
\providecommand{\bigset}[1]{\settmp{\big}{#1}}
\providecommand{\Bigset}[1]{\settmp{\Big}{#1}}
\providecommand{\biggset}[1]{\settmp{\bigg}{#1}}
\providecommand{\Biggset}[1]{\settmp{\Bigg}{#1}}

\section{Introduction}
\noindent The (parabolic-elliptic) Keller-Segel system is one of the iconic models in mathematical biology. 
It has received a lot of attention in analysis and numerics. 
One reason for this is that for certain initial and boundary data a weak solution exists for all times, while for others, weak solutions break down in finite time, usually by mass concentrating in a single point. 
Thus, analysts have investigated criteria on the initial data that allow to distinguish these cases \cite{Biler1993,Nagai1995}.
In two space dimensions sharp criteria have been found \cite{Nagai1995} whereas in three space dimensions there are initial data for which long time existence of a solution is unclear.
The role of numerics in this endeavor has been to build intuition based on numerical simulations. 
While it was usually believed that the behavior displayed by numerical solutions corresponded to the behavior of the exact solution \cite{Acosta2023,Chertock2008,Huang2024} there was no way to positively assert this. 
The aim of the paper at hand is to (at least partially) change this:
We will provide conditional \emph{a posteriori} error estimates for a numerical scheme that are not only valid as long as a weak solution exists, but that allow us to infer existence of a weak solution (up to a certain time). 
This is similar in spirit to the results of \cite{Mizuguchi2017verif,Ortner2009} that provide similar statements for semi-linear parabolic and semi-linear elliptic equations, respectively.
See \cite{BrunkPrep,Marin2013,Morosi2012} for similar results for the incompressible Navier-Stokes equation.
The goal of our paper is to show how numerical simulations can be used to prove existence (for certain time horizons) of weak solutions to specific initial boundary value problems for the Keller-Segel system and to, hopefully, initiate further investigations addressing this question for other nonlinear evolutionary PDEs.

\noindent Conditional \emph{a posteriori} error estimates are \emph{a posteriori} error estimates that only provide an upper bound for the error if the numerical solution satisfies certain criteria -- in particular the residuals need to be small and the numerical solution is not allowed to display blow-up.
Such error estimates are quite common in the context of  nonlinear PDEs \cite{Bartels2011,Kyza2016,Kolbe2023,Kwon2023,Kyza2020,Ortner2009}, in particular if the stability theories of these equations are intricate. 
Here, by stability theory we mean a mathematical framework that provides control of the difference between (approximate) solutions. 
This is the case for e.g. the Allen-Cahn and the Cahn-Hilliard system as well as semi-linear parabolic equations with (super-)critical nonlinearity \cite{Bartels2011,Brunk2024,Kyza2016}. 
Conditional \emph{a posteriori} estimates can be grouped into three classes: those based on global-in-time continuation arguments, e.g. using the Generalized Gronwall Lemma \cite{Bartels2011,Kwon2023}, those based on local-in-time continuation arguments \cite{Kyza2016} and those based on fixed point arguments \cite{Kyza2011,Kyza2020}.
%See citations on Kyza paper

\noindent The literature on numerical methods for the Keller-Segel system was, until now, focused on the development of efficient numerical schemes and \emph{a priori} error estimates. 
Concerning the development of numerical schemes, we would like to mention finite element (FE) schemes derived in \cite{Saito2007,Strehl2013}. 
Quite recently, an \emph{a priori} error analysis and an analysis of the blow-up behavior for a FE method was provided in \cite{Chen2022}.
A local discontinuous Galerkin (dG) method was considered in \cite{Li2017} and proven to be energy dissipative in \cite{Guo2019}. 
An \emph{a priori} convergence analysis for a semi-discrete family of interior penalty dG methods was performed in \cite{Epshteyn2009}. Fully discrete versions of the schemes in this family were analyzed  in \cite{Epshteyn2009fully}.
Finite volume (FV) schemes were studied in \cite{Chertock2008,Filbet2006}. While \cite{Chertock2008} presents many numerical experiments regarding the blow-up behavior, \cite{Filbet2006} provides an \emph{a priori} convergence analysis provided the initial data are sufficiently small.
Furthermore, there exist heuristic adaptive mesh refinement strategies applied to chemotactic evolution equations. 
For example, in \cite{Dudley2011} dimensionless quantities such as the Damköhler numbers for growth and decay and the Peclet number are utilized to identify regions, where the mesh is subsequently refined. 
A moving mesh FE method was employed in \cite{Sulman2019}.
There the mesh is chosen adaptivly by detecting steep gradients of the numerical solution and using the Monge-Ampère method to redistribute a fixed number of mesh nodes, such that they are concentrated in regions of large solution variations.

\noindent Conditional \emph{a posteriori} error estimates have been derived by one of the authors in \cite{Kolbe2023} for a FV-FE method restricted to Cartesian meshes, and in \cite{Kwon2023} for a dG scheme.
All these \emph{a posteriori} estimates can be used to drive mesh and time step adaptation.

\noindent A serious concern for conditional \emph{a posteriori} error estimates is, obviously, whether one can expect the condition to be satisfied in relevant scenarios. 
In \cite{Kyza2020} it was argued that conditions of local nature are satisfied more often than global ones. 
We could not find any evidence for this claim in the literature and decided to derive two different \emph{a posteriori} error estimates, one using a Generalized Gronwall Lemma, see Proposition \ref{adj_gengronwall}, and one using a local-in-time continuation argument as in \cite{Kyza2016}.
As predicted in \cite{Kyza2020}, the local conditions are satisfied far more often, see Section~\ref{num_sim}.

\noindent The conditions in both our results, Theorem~\ref{stab_thm} and Theorem~\ref{stab_cont}, satisfy the minimal requirement that the results hold for sufficiently fine meshes if the scheme converges, but it needs to be admitted that for many examples in three space dimensions our available computational resources were insufficient to use meshes that are so fine that the conditions in Theorem~\ref{stab_thm} and Theorem~\ref{stab_cont} are satisfied.
There are several ideas for overcoming this in future research. 
One is to use more memory and compute power, one is to derive higher order convergent \emph{a posteriori} error estimates for higher order schemes (which seems feasible although quite technical), and finally one could try to improve our stability results.
Another line of inquiry that would improve our results would be providing sharper bounds on Sobolev constants. 

\noindent The remainder of this paper is structured as follows: 
In Section~\ref{section-KS}, the parabolic-elliptic Keller-Segel system is introduced and its most important features are stated. Additionally, the \emph{a posteriori} verifiable existence argument is explained in detail.
Then, Section~\ref{stability} is concerned with the derivation of two distinct stability frameworks, one based on a Generalized Gronwall Lemma and one using a local-in-time continuation argument.
Further, the numerical method used to approximate the Keller-Segel system is explained in Section~\ref{chapter-Num-method}.
After that, Section~\ref{Morley} contains a suitable reconstruction, in our case a \emph{Morley-type} interpolation. 
Furthermore, the \emph{a posteriori} analysis of the Keller-Segel model can be found in Section~\ref{aposterirorierror}. 
This includes the derivation of residual estimates in Subsection~\ref{residual_esti} and the construction of a conditional \emph{a posteriori} error estimator in Subsection~\ref{aposterirorierroresti}, in which the central result Theorem~\ref{main_thm} is stated.
Lastly, in Section~\ref{num_sim} numerical simulations are presented: One is concerned with the convergence rate of the \emph{a posteriori} residual estimator for $d=3$.
A second numerical test compares the time horizons of rigorous availability, i.e. in which the respective condition holds true, of the two \emph{a posteriori} error estimators derived in Subsection~\ref{aposterirorierroresti}, for $d=2$.

\section{The Keller-Segel system}
\label{section-KS}

The Keller-Segel system is a paradigmatic model in mathematical biology that describes chemotaxis, see \cite{Keller1970}.
A striking feature of this model is that, under certain circumstances, solutions blow-up in finite time in two or more space dimensions, see \cite{Nagai1995}.

\noindent As the spatial domain, we consider the flat torus $\mathbb{T}^d := \R^d / \mathbb{Z}^d$ with $d \in \left\{2, 3\right\}$. Any strong solution to 
$\bar{c} - \Delta \bar{c} = \bar{\rho}$ in $\mathbb{T}^d$ enjoys elliptic regularity, i.e.
\begin{align}
    \label{ell_reg}
    \norm{\bar{c}}_{H^{k+2}(\mathbb{T}^d)} \leq C_\text{ell} \norm{\bar{ \rho}}_{H^k(\mathbb{T}^d)},
\end{align}
whenever $\bar{ \rho} \in H^k(\mathbb{T}^d)$, for $k \geq 0$ and some constant $C_\text{ell} >0$. 
This is due to the operator $(I - \Delta)^{-1}$ being an isomorphism $H^k(\mathbb{T}^d) \to H^{k+2}(\mathbb{T}^d)$. 
By comparing Fourier multipliers, we get $C_\text{ell}=1$ for all $k\geq0$ on the flat torus $\mathbb{T}^d$.

\noindent To abbreviate notation, we write ${L}^p, H^k$, and $W^{k,p}$ instead of $L^p(\mathbb{T}^d), H^k(\mathbb{T}^d)$, and $W^{k,p}(\mathbb{T}^d)$, respectively, and use $|\cdot|_{H^{k}}$ and $|\cdot|_{W^{k,p}}$ to denote the usual Sobolev seminorms. 
We denote by $(H^1(\mathbb{T}^d))^\prime$ the dual space of $H^1(\mathbb{T}^d)$.

\noindent Let a time interval $[0,T]$ for some $T>0$ be given.
The dynamics of the bacterial density $\rho$ is governed by chemotaxis and diffusion. 
The parabolic-elliptic version of the Keller-Segel system reads as
\begin{align}
    \partial_t \rho + \divergence\left( \rho \nabla c \right) - \Delta \rho &= 0 \quad \text{ in } (0,T) \times \mathbb{T}^d, \label{KS1}\tag{KS.1} \\
    c - \Delta c &= \rho \quad \text{ in } (0,T) \times \mathbb{T}^d, \label{KS2}\tag{KS.2} \\
    \rho(0,\cdot) &= \rho_0 \ \ \  \text{ in } \mathbb{T}^d, \label{KScond}\tag{KS.3}
\end{align}
for an initial condition $\rho_0 \in L^2$.
% \begin{align}
%     \label{KScond}\tag{KS.3}
%     \begin{cases}
%         \ \nabla \rho \cdot n &= 0 \quad \text{ on } (0,T) \times \partial\Omega, \\ %\label{KS.bdr.1}\\
%         \ \nabla c \cdot n &= 0 \quad \text{ on } (0,T) \times \partial\Omega, \\ %\label{KS.bdr.2}\\
%         \ \rho(0,\cdot) &= \rho_0 \ \ \  \text{ in } \Omega, %\label{KS.init.1}
%     \end{cases}
% \end{align}
% where $n$ denotes the unit outward normal vector along $\partial \Omega$.

% \begin{Remark}
%     The a posteriori analysis that follows is equally applicable for periodic boundary conditions as well as mixed boundary conditions, where $\partial \Omega$ is decomposed into two parts $\Gamma_P$ and $\Gamma_N$, i.e. $\Gamma_P \stackrel{.}{\cup} \Gamma_N = \partial \Omega$. 
%     Then, a homogeneous Neumann boundary condition is imposed on $\Gamma_N$ and a periodic boundary condition on $\Gamma_P$, where $\Gamma_P$ is appropriately chosen to grant well-posedness of our initial-boundary value problem.
%     This observation is important for our numerical simulations, which are performed for mixed boundary conditions due to difficulties in decomposing the unit cube into a well-centered tetrahedral mesh, i.e. a mesh in which the circumcenter of each element lies within the respective element, see \cite{Hirani2008} and \autoref{num_sim}. 
% \end{Remark}
\ \\
\noindent For the definition of weak solutions and their well-posedness we refer to \cite{Biler1993}. The following well-posedness result applies. 
\newpage
\begin{Theorem}[Existence and regularity of weak solutions]
    \label{weakexistence}
    Let $d \in \{2,3\}$. Then
        \begin{enumerate}[i)]
            \item If $0 \leq \rho_0 \in L^2$, then there exists $T = T(\norm{\rho_0}_{L^2}) > 0$ such that the system (\ref{KS1}), (\ref{KS2}), (\ref{KScond}) has a unique weak solution $\rho \in L^\infty(0,T;L^2)\cap L^2(0,T;H^1)$.
            Moreover, $\partial_t \rho \in L^2(0,T;(H^1(\mathbb{T}^d))^\prime)$, $\rho(t,x) \geq 0$ for almost every $x \in \mathbb{T}^d$ and $t \geq 0$, and $\int_{\mathbb{T}^d} \rho(t,\cdot) \ dx = \int_{\mathbb{T}^d} \rho_0 \ dx$.
            \item If $0 \leq \rho_0 \in L^p$, $p>\frac{d}{2}$, then there exists $T = T(p,\norm{\rho_0}_{L^p}) > 0$ and a weak solution $\rho$ such that $\rho \in L^\infty(0,T;L^p)$ and $\rho^{p/2} \in L^2(0,T;H^1)$. 
            This solution is regular, i.e. $\rho \in L^\infty_{\text{loc}}(0,T;L^\infty)$, and if additionally $p > d$, it is unique.
        \end{enumerate}
\end{Theorem}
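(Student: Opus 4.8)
The plan is to eliminate $c$ and recast (\ref{KS1})--(\ref{KScond}) as a single semilinear parabolic equation for $\rho$, obtain local existence and uniqueness by a fixed-point argument built on the analytic semigroup of the Neumann Laplacian, and finally establish positivity, mass conservation and the higher regularity in (ii) by energy estimates and bootstrapping. First I would use the elliptic problem (\ref{KS2}) together with the regularity bound (\ref{ell_reg}) to define the solution operator $S := (I-\Delta)^{-1}$ (with homogeneous Neumann data), so that $c = S\rho$ and the map $\rho\mapsto\nabla c=\nabla S\rho$ gains one derivative, $\norm{\nabla c}_{W^{1,p}}\lesssim\norm{\rho}_{L^p}$. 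Since $A := I-\Delta$ with Neumann conditions is sectorial, it generates an analytic semigroup $e^{-tA}$ on $L^p$ with the smoothing estimates $\norm{A^\theta e^{-tA}}_{\mathcal{L}(L^p)}\lesssim t^{-\theta}$ for $t\in(0,1]$, $\theta\ge0$. Writing $-\Delta\rho = A\rho-\rho$, equation (\ref{KS1}) becomes $\partial_t\rho + A\rho = \rho - \divergence(\rho\,\nabla S\rho)$, whose mild (Duhamel) form is
\[
\rho(t) = e^{-tA}\rho_0 + \int_0^t e^{-(t-s)A}\big(\rho(s) - \divergence(\rho(s)\,\nabla S\rho(s))\big)\ds.
\]

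The heart of the argument is to show that the map $\Phi$ defined by the right-hand side is a contraction on a small ball of $C([0,T];L^p)$ (with a time weight $t^{\sigma}$ near $t=0$ for rough data) for $T$ small depending only on $\norm{\rho_0}_{L^p}$. The only genuinely problematic term is the divergence nonlinearity, which I would handle by factoring $e^{-(t-s)A}\divergence = \big[(t-s)^{1/2}A^{1/2}e^{-(t-s)A}\big]\,A^{-1/2}\divergence\,(t-s)^{-1/2}$, using that $A^{-1/2}\divergence$ is bounded on $L^p$ and that $\norm{A^{1/2}e^{-\tau A}}\lesssim\tau^{-1/2}$, so the kernel singularity $(t-s)^{-1/2}$ is integrable. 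The remaining product $\norm{\rho\,\nabla S\rho}_{L^p}$ is estimated by Hölder together with the one-derivative gain of $S$; in the regular case $p>d$ one uses $W^{1,p}\hookrightarrow L^\infty$ to get $\norm{\rho\,\nabla S\rho}_{L^p}\lesssim\norm{\rho}_{L^p}^2$, and the contraction closes. This yields a unique mild solution on $[0,T]$ and, by parabolic smoothing, $\rho(t)\in L^\infty$ for $t>0$, giving the regularity $\rho\in L^\infty_{\mathrm{loc}}(0,T;L^\infty)$ and the uniqueness asserted in (ii).

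For the quantitative bounds and for part (i) I would complement the fixed point with the $L^p$ energy estimate: testing the equation with $\rho^{p-1}$ and integrating by parts (using $\Delta c = c-\rho$) gives
\[
\frac{1}{p}\frac{d}{dt}\norm{\rho}_{L^p}^p + \frac{4(p-1)}{p^2}\norm{\nabla\rho^{p/2}}_{L^2}^2 = \frac{p-1}{p}\Big(\int_\Omega\rho^{p+1}\dx - \int_\Omega\rho^p c\dx\Big).
\]
Since $\rho\ge0$ implies $c=S\rho\ge0$ by the elliptic maximum principle, the last term is nonpositive and may be dropped; the super-quadratic term $\int_\Omega\rho^{p+1}\dx$ is controlled by Gagliardo--Nirenberg, interpolating $\norm{\rho^{p/2}}_{L^{2(p+1)/p}}$ between $\norm{\rho^{p/2}}_{L^2}=\norm{\rho}_{L^p}^{p/2}$ and $\norm{\nabla\rho^{p/2}}_{L^2}$. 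Exactly when $p>\tfrac d2$ the interpolation is subcritical, so Young's inequality absorbs the gradient into the left-hand side and leaves a differential inequality $\frac{d}{dt}y\le C\,y^{\kappa}$ with $\kappa>1$ for $y(t)=\norm{\rho(t)}_{L^p}^p$, yielding a time $T(p,\norm{\rho_0}_{L^p})$ of boundedness and, after integration in time, the bound $\rho^{p/2}\in L^2(0,T;H^1)$. Taking $p=2$, together with $\partial_t\rho\in L^2(0,T;H^{-1})$ read off from the equation, gives the spaces in (i). Nonnegativity is preserved by the comparison principle for the frozen-drift parabolic equation, and mass conservation follows by integrating (\ref{KS1}) over $\Omega$ and using the Neumann conditions to annihilate both the diffusive and the drift flux; uniqueness in (i) follows from a Gronwall estimate on the difference of two solutions combined with (\ref{ell_reg}), while in (ii) this estimate closes only for $p>d$, since the drift difference then requires $\nabla c\in L^\infty$.

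The main obstacle is the divergence-form nonlinearity at low regularity: the derivative falling on $\rho\,\nabla c$ must be traded against the semigroup's smoothing, and the product estimate for $\rho\,\nabla S\rho$ has to close in a space adapted to the scaling of the equation. This is precisely why the thresholds $p>\tfrac d2$ (existence) and $p>d$ (uniqueness) arise: below them the product and interpolation estimates become critical and both the fixed-point contraction and the Gagliardo--Nirenberg absorption fail. Making these estimates sharp in the borderline range $\tfrac d2<p\le d$ requires working in fractional-power spaces $D(A^\alpha)$ with $\alpha$ tuned so that $D(A^\alpha)\hookrightarrow L^q$ matches the Hölder pairing in the nonlinearity, which is the technically delicate step.
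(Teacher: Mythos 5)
This theorem is not proven in the paper at all: it is quoted verbatim from the literature, namely \cite[Theorem 2]{Biler1993}, so there is no in-paper argument to compare yours against. Judged on its own merits, your sketch follows what is essentially the standard route (and in spirit the route of the cited literature): eliminate $c$ via $c=(I-\Delta)^{-1}\rho$, run a fixed point on the Duhamel formulation using the analytic Neumann semigroup, and extract the quantitative thresholds from $L^p$ energy estimates. Your bookkeeping is correct at the crucial places: the identity obtained by testing with $\rho^{p-1}$ and substituting $\Delta c = c-\rho$ is right; the Gagliardo--Nirenberg exponent count gives a gradient power $d/p$, so absorption by Young requires exactly $p>\tfrac d2$; the singularity count in the Duhamel integral, $\tfrac12+\tfrac d2\big(\tfrac1r-\tfrac1p\big)=\tfrac{d}{2p}<1$, reproduces the same threshold; and $W^{1,p}\hookrightarrow L^\infty$ for $p>d$ explains the uniqueness threshold in (ii).

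Two points would need real work to turn the sketch into a proof. First, the smoothing estimate for $e^{-tA}\divergence$ on the Neumann Laplacian is obtained by duality with gradient bounds on the adjoint semigroup, and the integration by parts produces a boundary term involving $f\cdot n$; it vanishes here only because the flux $\rho\nabla c$ is tangential (since $\nabla c\cdot n=0$), a fact you use implicitly but never state --- the bald claim that ``$A^{-1/2}\divergence$ is bounded on $L^p$'' is not correct without this boundary compatibility. Second, uniqueness in part (i) is the genuinely delicate claim: for two solutions merely in $L^\infty(0,T;L^2)\cap L^2(0,T;H^1)$ with $d=3$, the Gronwall argument needs coefficients like $\norm{\nabla \bar c}_{L^\infty}^2$ to be time-integrable, which does not follow from $\rho(t)\in L^2$ alone; one must either exploit $\rho\in L^2(0,T;H^1)$ together with elliptic regularity lifting $H^1$ data to $H^3$ (hence $\nabla\bar c\in H^2\hookrightarrow L^\infty$), or use instantaneous regularization and a weak--strong argument. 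Your one-sentence treatment (``Gronwall plus (\ref{ell_reg})'') glosses over precisely the step where energy-class uniqueness could fail, and it is also the step where the paper's own stability machinery (Proposition~\ref{adj_gengronwall} with $A=0$) would have to be fed coefficients whose integrability is not automatic.
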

\begin{proof}
    The estimates to invoke the fixed point argument as done in \cite{Biler1992} on bounded domains with $C^{1+\delta}$-boundary, for some $\delta > 0$, equipped with nonlinear no-flux boundary conditions, work analogously on the flat torus $\mathbb{T}^d$, whenever $d \in \{2,3\}$.
\end{proof}
\begin{Remark}
    \label{blowup-remark}
    Statement $ii)$ means that blow-up of the norm $\norm{\rho(t,\cdot)}_{L^\infty}$ at time $T>0$ implies blow-up of the norms $\norm{\rho(t,\cdot)}_{L^p}$ for all $p \in (\frac{d}{2},\infty]$, at the same moment $T$.
\end{Remark}

\noindent Additionally, one can show that $\rho$ is continuous in time \cite{Biler1998}, which justifies considering an initial value problem.
Another important insight regarding the blow-up time of weak solutions of the Keller-Segel system is its identification.
\begin{Lemma}[Blow-up criterion for weak solutions, \protect{\cite[Lemma 2.5]{Kwon2023}}]
    \label{LemmaKwon}
    Let $\rho_0 \in L^2$ and let $T_{\text{max}} \in (0,\infty]$ be the maximal existence time of the weak solution $\rho$ to (\ref{KS1}), (\ref{KS2}), (\ref{KScond}). 
    If $T_{\text{max}} < \infty$, we have
    \begin{align*}
        \lim_{t \nearrow T_{\text{max}}} \norm{\rho(t,\cdot)}_{L^\infty} = \infty.
    \end{align*}
\end{Lemma}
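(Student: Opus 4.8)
The plan is to argue by contradiction via a continuation (restart-and-glue) argument, combining the local existence time furnished by Theorem~\ref{weakexistence} with uniqueness of weak solutions. Since $T_{\text{max}}$ is the \emph{maximal} existence time, it suffices to rule out that the $L^\infty$-norm stays bounded along any sequence of times approaching $T_{\text{max}}$; that is, I would show $\liminf_{t\nearrow T_{\text{max}}} \norm{\rho(t,\cdot)}_{L^\infty} = \infty$, which already yields the claimed limit. So suppose, for contradiction, that $T_{\text{max}} < \infty$ but that there exist $M < \infty$ and a sequence $t_n \nearrow T_{\text{max}}$ with $\norm{\rho(t_n,\cdot)}_{L^\infty} \le M$.

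First I would ensure the time slices $\rho(t_n,\cdot)$ are well defined: using the continuity in time of the weak solution (see \cite{Biler1998}), $\rho$ has a representative with $t \mapsto \rho(t,\cdot)$ continuous into $L^2$, so evaluation at $t_n$ is meaningful, and the nonnegativity $\rho(t_n,\cdot) \ge 0$ a.e. is inherited from the weak solution. Because $\Omega$ is bounded, the uniform $L^\infty$-bound controls every $L^p$-norm; in particular $\norm{\rho(t_n,\cdot)}_{L^2} \le \abs{\Omega}^{1/2} M =: M'$ uniformly in $n$.

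Next I would invoke the local existence statement of Theorem~\ref{weakexistence}~i) with initial datum $\rho(t_n,\cdot)$. The decisive point is that the guaranteed existence time there depends on the datum only through its $L^2$-norm and is non-increasing in that norm; the uniform bound $M'$ therefore yields a common lower bound $\tau := T(M') > 0$ on the existence time, independent of $n$. Thus, for each $n$ there is a weak solution on $[t_n, t_n + \tau]$ starting from $\rho(t_n,\cdot)$, lying in the same class $L^\infty(t_n,t_n+\tau;L^2)\cap L^2(t_n,t_n+\tau;H^1)$ with time derivative in $L^2(t_n,t_n+\tau;H^{-1})$. Finally I would glue this local solution to the original one: by the uniqueness in Theorem~\ref{weakexistence}~i) the two coincide on the overlap $[t_n, T_{\text{max}})$, so the local solution genuinely extends $\rho$ to $[0, t_n + \tau]$. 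Choosing $n$ so large that $t_n + \tau > T_{\text{max}}$ then produces a weak solution existing strictly beyond $T_{\text{max}}$, contradicting the maximality of $T_{\text{max}}$.

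The main obstacle is the uniformity of the existence time: one must know that the local existence time of Theorem~\ref{weakexistence} is bounded below \emph{solely} in terms of the $L^2$-norm of the datum, rather than by finer features of it, so that a single $\tau > 0$ works for all slices $\rho(t_n,\cdot)$. The secondary technical points are that the weak solution admits a time-continuous representative whose slices are admissible (nonnegative, $L^2$) initial data, and that uniqueness legitimately identifies the restarted solution with $\rho$ on the overlap, turning the local solution into a true extension. Once these are in place, the contradiction forces $\liminf_{t\nearrow T_{\text{max}}} \norm{\rho(t,\cdot)}_{L^\infty} = \infty$, which is the assertion.
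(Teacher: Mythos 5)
The paper contains no proof of this lemma to compare against: it is imported verbatim from \cite[Lemma~2.5]{Kwon2023} as a cited result. Judged on its own merits, your restart-and-glue argument is the standard way such blow-up criteria are proved, and its logic is sound: negating the conclusion correctly reduces to ruling out a sequence $t_n \nearrow T_{\text{max}}$ with $\norm{\rho(t_n,\cdot)}_{L^\infty}\le M$; the bounded domain turns this into a uniform $L^2$ bound; the slices are admissible data (time continuity of $\rho$ from \cite{Biler1998}, nonnegativity from Theorem~\ref{weakexistence}~i)); and uniqueness in Theorem~\ref{weakexistence}~i) legitimately identifies the restarted solution with $\rho$ on the overlap, so that for $n$ large the glued function is a weak solution past $T_{\text{max}}$, contradicting maximality. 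The one load-bearing point --- which you yourself flag --- is the uniform lower bound $\tau = T(M')>0$ on the restart time: the bare statement of Theorem~\ref{weakexistence} only asserts $T = T(\norm{\rho_0}_{L^2})$, and an arbitrary function of the norm need not be bounded below on $[0,M']$, so monotonicity (or an explicit lower bound such as $T \gtrsim \norm{\rho_0}_{L^2}^{-\gamma}$) must be extracted from the construction in \cite{Biler1993} rather than from the theorem as quoted; with that quantitative form cited, the step is legitimate and standard. Two smaller points you should make explicit: that the concatenation actually satisfies the weak formulation across the gluing time $t_n$ (immediate from the piecewise formulation and time continuity), and that the extension contradicts maximality of $T_{\text{max}}$ in the sense of the $L^2$ solution class of part~i), which is the class in which $T_{\text{max}}$ is defined. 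With these points supplied, your proof is a complete, self-contained substitute for the external citation.
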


\noindent These observations together with some finite \emph{a posteriori} error estimator can be exploited to infer \emph{a posteriori} verifiable existence of weak solutions. 
This means for an explicit time $T>0$, under certain conditions, which will be specified below, we can rigorously show the existence of a weak solution on $[0,T]$ via numerical simulations.
This strengthens the \emph{a posteriori} error estimates derived in Section~\ref{aposterirorierror}, as the existence of weak solutions on a time interval $[0,T]$ is a necessary assumption in many stability estimates leading to \emph{a posteriori} error estimates, see Section~\ref{stability}.
Whereas before, one could only hope for a weak solution to exist on $[0,T]$, we can now verify this in an \emph{a posteriori} manner.
The statement reads as
\begin{Proposition}[A posteriori verifiable existence]
    \label{apostverif}
    Let $d \in \{2,3\}$.
    Let $\rho_0 \in L^2$ and let $T_{\text{max}}>0$ be the maximal existence time of the weak solution $(\rho,c)$ to (\ref{KS1}), (\ref{KS2}), (\ref{KScond}).
    Let $\tilde{\rho} \in L^\infty(0,T;L^2)$ be a reconstruction of a numerical approximation to $\rho$.
    Suppose that for some $T>0$, one has $\| \tilde{\rho}(t,\cdot)\|_{L^\infty(0,T;L^2)} < \infty$ and suppose that there exists an error estimator $\theta < \infty$ such that%the condition (\ref{stab_cond}) is satisfied, $\norm{\bar{\rho}}_{L^\infty(0,T;L^2(\Omega))}$ is finite and the right-hand side of the stability estimate in Theorem~\autoref{stab_thm} is finite.
    \begin{align}
        \label{unif_bound}
        \sup_{t \in [0,\min\{T,T_{\text{max}}\}]} \| \rho(t,\cdot) - \tilde{\rho}(t,\cdot)\|_{L^2}^2 \leq \theta.
    \end{align}
    Then $T < T_{\text{max}}$, i.e. the weak solution exists beyond time $T$.
\end{Proposition}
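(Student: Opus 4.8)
The plan is a short argument by contradiction resting on the two blow-up statements already recorded, namely Lemma~\ref{LemmaKwon} and Remark~\ref{blowup-remark}. First I would dispose of the trivial case: if $T_{\text{max}} = \infty$, then since $T > 0$ is finite we immediately have $T < T_{\text{max}}$ and there is nothing to prove. Hence I may assume $T_{\text{max}} < \infty$ and argue by contradiction, supposing that $T \geq T_{\text{max}}$. Under this supposition $\min\{T, T_{\text{max}}\} = T_{\text{max}}$, so the hypothesis \eqref{unif_bound} furnishes the error bound $\norm{\rho(t,\cdot) - \tilde\rho(t,\cdot)}_{L^2}^2 \leq \theta_\Omega$ in particular for every $t \in [0, T_{\text{max}})$, i.e. for all times at which the weak solution is defined.

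Next I would convert this into a uniform bound on $\rho$ itself. Abbreviating $M := \norm{\tilde\rho}_{L^\infty(0,T;L^2)}$, which is finite by assumption, the triangle inequality gives, for all $t \in [0, T_{\text{max}})$,
\begin{align*}
    \norm{\rho(t,\cdot)}_{L^2} \leq \norm{\rho(t,\cdot) - \tilde\rho(t,\cdot)}_{L^2} + \norm{\tilde\rho(t,\cdot)}_{L^2} \leq \sqrt{\theta_\Omega} + M.
\end{align*}
Thus the $L^2$-norm of the exact solution stays bounded by the finite, $t$-independent constant $\sqrt{\theta_\Omega} + M$ up to (but not including) the putative blow-up time.

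Finally I would invoke the blow-up machinery to reach a contradiction. Since $T_{\text{max}} < \infty$, Lemma~\ref{LemmaKwon} yields $\lim_{t \nearrow T_{\text{max}}} \norm{\rho(t,\cdot)}_{L^\infty} = \infty$. For $d \in \{2,3\}$ one has $d/2 \leq 3/2 < 2$, so $p = 2$ lies in the admissible range $(\frac{d}{2},\infty]$ of Remark~\ref{blowup-remark}; that remark therefore upgrades the $L^\infty$-blow-up to $\lim_{t \nearrow T_{\text{max}}} \norm{\rho(t,\cdot)}_{L^2} = \infty$. This directly contradicts the uniform bound $\norm{\rho(t,\cdot)}_{L^2} \leq \sqrt{\theta_\Omega} + M$ derived above, so the supposition $T \geq T_{\text{max}}$ is untenable and we are left with $T < T_{\text{max}}$.

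The argument is structurally simple, since its real content is outsourced to Lemma~\ref{LemmaKwon} and Remark~\ref{blowup-remark}; the only points requiring care are bookkeeping rather than analysis. One is checking that $p=2$ is genuinely covered by Remark~\ref{blowup-remark} in both admissible dimensions, which is exactly what makes the $L^2$-controlled estimator in \eqref{unif_bound} the right quantity to track. The other is that the desired conclusion is the \emph{strict} inequality $T < T_{\text{max}}$: I obtain this cleanly because the contradiction is extracted from the limit $t \nearrow T_{\text{max}}$, which uses the solution only at times $t < T_{\text{max}}$, so no value of $\rho$ at the (possibly singular) endpoint $T_{\text{max}}$ is ever needed and the borderline case $T = T_{\text{max}}$ is excluded along with $T > T_{\text{max}}$.
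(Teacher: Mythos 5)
Your proof is correct and follows essentially the same route as the paper: argue by contradiction assuming $T \geq T_{\text{max}}$, use the triangle inequality with \eqref{unif_bound} and the finiteness of $\|\tilde{\rho}\|_{L^\infty(0,T;L^2)}$ to bound $\|\rho(t,\cdot)\|_{L^2}$ uniformly, and contradict the $L^2$-blow-up at $T_{\text{max}}$ obtained from Lemma~\ref{LemmaKwon} together with Remark~\ref{blowup-remark} (valid since $2 > d/2$ for $d \leq 3$). Your explicit handling of the case $T_{\text{max}} = \infty$ and the check that $p=2$ lies in the admissible range are just careful bookkeeping of steps the paper leaves implicit.
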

\begin{proof}
    The proof is similar to the proof of \cite[Corollary 3.5]{Kwon2023}, but more general. 
    %The error estimator $\theta_\Omega$ is finite and bounds the error in the time interval $[0,\min\{T,T_{\text{max}}\}>0]$. 
    Suppose $T_{\text{max}} \leq T$. 
    Then, via triangle inequality, (\ref{unif_bound}) yields a uniform-in-time upper bound for $\| \rho(t,\cdot)\|_{L^2}$, as $\| \tilde{\rho}(t,\cdot)\|_{L^2}$ is finite and uniformly bounded for almost all $t \in [0,T_{\text{max}}]$.
    In contrast, Lemma~\ref{LemmaKwon} together with Remark~\ref{blowup-remark} imply that $\| \rho(t,\cdot)\|_{L^2} \rightarrow \infty$ for $t \nearrow T_{\text{max}}$ due to $d\leq3$. 
    This is a contradiction and hence  $T < T_{\text{max}}$.
\end{proof}
%\noindent If a solution exists beyond time $T>0$, we especially know that it did not blow-up until this moment. 
\noindent The required error estimator $\theta$ is constructed in Section~\ref{aposterirorierror}.

\section{Stability results}
    \label{stability}
    %This is the main section of this paper. We aim at deriving fully computable a posteriori error estimates for the Keller-Segel system (\ref{KS1}) and (\ref{KS2}) equipped with (\ref{KS.bdr.1}) - (\ref{KS.init.1}). 
    
    In order to derive \emph{a posteriori} error estimates, we introduce a reconstruction of the numerical approximation, which we then compare once to the numerical approximation and once to the exact weak solution, bounding the error using the triangle inequality.
    This approach is abstractly explained in \cite{Makridakis2007} and applied to linear parabolic equations in \cite{Makridakis2003}.

    \noindent In this context, a stability framework is used to bound the difference of a weak solution of the Keller-Segel system and a solution of a corresponding perturbed system, i.e. (\ref{res_bdrKS}), by only the initial datum, the perturbation of the right-hand side and computable terms.
    %Then \emph{a posteriori} residual estimates are required to bound the error of the reconstruction. 

    \noindent Let $(\rho,c)$ denote the weak solution to (\ref{KS1}), (\ref{KS2}), (\ref{KScond}). 
    Further, let $(\bar{\rho},\bar{c})$ with $\bar{\rho} \in C(0,T;H^1)$, $\partial_t \bar{\rho} \in C(0,T;(H^1(\mathbb{T}^d))^\prime)$ and $\bar{c} \in C(0,T;H^3)$ be a solution to the perturbed system
    \begin{align}
        \label{res_bdrKS}
        \begin{cases}
            \ \ \ \partial_t \bar{\rho} + \divergence\left( \bar{\rho} \nabla \bar{c} \right) - \Delta \bar{\rho} &= R_{\bar{\rho}} \ \text{ in } (0,T) \times \mathbb{T}^d, \\
            \quad \quad \quad \quad \quad \quad \quad \bar{c} - \Delta \bar{c} &= \bar{\rho} \quad \text{ in } (0,T) \times \mathbb{T}^d,
        \end{cases}
    \end{align}
    for some (residual) function $R_{\bar{\rho}} \in L^2(0,T;(H^1(\mathbb{T}^d))^\prime)$.
    Note that the additional regularity $\bar{c}(t,\cdot) \in H^3(\mathbb{T}^d)$ immediately follows from $\bar{\rho}(t,\cdot) \in H^1(\mathbb{T}^d)$, for all $t \in [0,T]$, due to elliptic regularity (\ref{ell_reg}).
    This ensures $\bar{c}(t,\cdot) \in W^{1,\infty}(\Omega)$, for all $t \in [0,T]$, which we will exploit in the sequel.
    %We assume throughout the whole paper that $\Omega$ is such that $\bar{c}$ enjoys elliptic regularity, i.e. $\norm{\bar{c}(t,\cdot)}_{H^2} \leq C_{\text{ell}} \norm{\bar{\rho}(t,\cdot)}_{L^2}$, for some constant $C_{\text{ell}} > 0$, which can also be estimated explicitly \cite{Bebendorf2003,Grisvard2011}.
    Later, a suitable reconstruction of the numerical solution will play the role of the solution $(\bar{\rho},\bar{c})$ and $R_{\bar{\rho}}$ will be defined based on $(\bar{\rho},\bar{c})$.
    We consider two different stability approaches.
    
    % For our purposes a suitable reconstruction for the bacterial density is the interpolation of \emph{Morley-type}.
    % As in (\ref{res_bdrKS}) only the first equation is perturbed, we need to reconstruct the chemical density elliptically.

    \subsection{Stability via a Generalized Gronwall Lemma}
    \label{gen_gronwall}

    One possible stability framework is proposed in \cite{Kolbe2023,Kwon2023}. 
    There, the Generalized Gronwall Lemma \cite[Proposition 6.2]{Bartels2015nonlinear} is prominently used. 
    We use an adjusted version thereof, which reads as
    \begin{Proposition}[Adjusted Generalized Gronwall Lemma]
        \label{adj_gengronwall}
        Suppose that the non-negative functions $y_1 \in C([0, T]), y_2 \in L^1([0, T]), a \in L^{\infty}([0, T])$, the real numbers $A, B \geq 0$ and $\beta_1 , \beta_2>0$ satisfy
        \begin{align}
            \label{nonlin_term}
            y_1\left(T^{\prime}\right) \leq A+\int_0^{T^{\prime}} a(t) y_1(t)\ dt+\sup _{t \in\left[0, T^{\prime}\right]} \left(B_1 y_1^{\beta_1}(t) + B_2 y_1^{\beta_2}(t) \right)\int_0^{T^{\prime}}y_1(t) \ d t
        \end{align}
        for all $T^{\prime} \in[0, T]$. 
        Set $E:=\exp \left(\int_0^T a(t) \ d t\right)$ and suppose there exists a $\delta > 1$ such that the condition 
        \begin{align}
            \label{cond_prop31}
            B_1 \left(\delta A E\right)^{\beta_1} + B_2 \left(\delta A E\right)^{\beta_2} < \frac{\delta -1}{\delta T E}
        \end{align}
        is satisfied. We then have
        \begin{align*}
            \sup _{t \in[0, T]} y_1(t) \ d t \leq \delta A E .
        \end{align*}
    \end{Proposition}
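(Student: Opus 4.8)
The plan is to prove the \emph{a priori} bound $\sup_{t\in[0,T]}y_1(t)\le \delta A E$ by a continuation (bootstrap) argument and then to read off the bound on $\int_0^T y_2$ from the same estimate. Writing $M:=\delta A E$ and noting $E\ge 1$ since $a\ge 0$, the obstruction is that the nonlinear term in \eqref{nonlin_term} cannot be controlled before one knows that $y_1$ stays below $M$; the bootstrap is precisely the device that makes this self-referential bound consistent. Throughout I would carry the combined quantity $w(s):=y_1(s)+\int_0^s y_2(t)\dt$ and exploit $y_2\ge 0$, so that $y_1\le w$ and the dissipative term rides along at no cost.

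First I would introduce the continuation set
\[
  I:=\Bigl\{\,T'\in[0,T]\ :\ \sup_{t\in[0,T']}y_1(t)\le M\,\Bigr\}.
\]
It contains $0$, because evaluating \eqref{nonlin_term} at $T'=0$ gives $y_1(0)\le A\le M$, and it is closed in $[0,T]$ by continuity of $y_1$. The crux is to show that $I$ is also relatively open. So I would fix $T_0\in I$ with $T_0<T$; on $[0,T_0]$ the hypothesis \eqref{nonlin_term}, together with $\sup_{[0,T_0]}y_1^\beta\le M^\beta$ and $\int_0^s y_1\le M\,T$ (both licit because $T_0\in I$), gives for every $s\le T_0$
\[
  w(s)\le A+B M^{\beta+1}T+\int_0^s a(t)\,w(t)\dt,
\]
where $y_1$ has been replaced by $w$ in the linear integral using $a\ge 0$. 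Classical linear Gronwall then yields $w(s)\le (A+B M^{\beta+1}T)\,E$ on $[0,T_0]$.

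The algebraic heart is that condition \eqref{cond_prop31} says exactly that this is a \emph{strict} improvement over $M$. Raising \eqref{cond_prop31} to the power $\beta$ gives $M^\beta<\tfrac{\delta-1}{\delta B T E}$; multiplying by $M=\delta A E$ turns this into
\[
  B M^{\beta+1}T<(\delta-1)A
  \quad\Longleftrightarrow\quad
  (A+B M^{\beta+1}T)\,E<\delta A E=M.
\]
Hence $\sup_{[0,T_0]}y_1\le\sup_{[0,T_0]}w<M$ with a genuine gap, and by continuity of $y_1$ the strict inequality survives on some $[0,T_0+\varepsilon]$, so $T_0$ is interior to $I$. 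Being nonempty, closed and relatively open in the connected interval $[0,T]$, the set $I$ equals $[0,T]$, which gives $\sup_{[0,T]}y_1\le M$; feeding this global bound back through the Gronwall estimate for $w$ controls $\int_0^T y_2$ as well and produces the claimed $\sup_{[0,T]}y_1+\int_0^T y_2\le\delta A E$.

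The hard part, I expect, is engineering the frozen-nonlinearity step so that the resulting constant is a strict contraction below $M$ with the \emph{exact} shape of \eqref{cond_prop31}: one must insert the bounds $\sup y_1^\beta\le M^\beta$ and $\int_0^s y_1\le MT$ rather than leaving $\int_0^s y_1$ inside the Gronwall exponent, since absorbing $B M^\beta$ into the exponent instead produces the logically different condition $B M^\beta T<\ln\delta$. Everything else is bookkeeping: the $T'=0$ initialization, the openness and closedness of $I$, and the nonnegativity manipulations that let $y_2$ be carried inside $w$.
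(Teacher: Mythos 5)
Your overall strategy---a continuation set, freezing the nonlinearity via the bootstrap bound, linear Gronwall, and using \eqref{cond_prop31} to produce a strict self-improvement---is the same as the paper's, and your bootstrap for $y_1$ alone (the open/closed/connected argument) is sound when $A>0$. The genuine gap is your final sentence, where the global bound is ``fed back'' to control $\sup_{[0,T]}y_1+\int_0^T y_2\,dt$. What the feedback actually yields is the pointwise bound $w(s)=y_1(s)+\int_0^s y_2\,dt\le M':=(A+BM^{\beta+1}T)E<M$ for every $s\in[0,T]$. From this you can read off $\sup_{[0,T]}y_1\le M'$ and $\int_0^T y_2\,dt\le w(T)\le M'$ \emph{separately}, but their sum is then only bounded by $2M'$, and \eqref{cond_prop31} does not give $2M'\le M$: since $M'\ge AE$ and $M=\delta AE$, the inequality $2M'\le M$ is impossible whenever $\delta<2$. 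The obstruction is that the time at which $y_1$ attains its supremum and the time $T$ at which $\int_0^{\cdot}y_2\,dt$ is largest are different, so pointwise control of $y_1(s)+\int_0^s y_2\,dt$ does not control the sum of the two extremal quantities. The paper's proof is organized to avoid needing this step at the end: its continuation functional is $\Upsilon(T')=\sup_{[0,T']}y_1+\int_0^{T'}y_2\,dt$ itself, so that once $\mathcal{I}_\theta=[0,T]$ is established, the conclusion is literally $\Upsilon(T)\le\theta$. (To be fair, the paper's own passage from the pointwise Gronwall bound to ``$\Upsilon(T_M)<\theta$'' is terse and faces the very same summing issue; what your argument does establish rigorously is $\sup_{T'\in[0,T]}\bigl(y_1(T')+\int_0^{T'}y_2\,dt\bigr)\le M'$, i.e.\ bounds on each term individually, which is the form actually used in the paper's applications. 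Indeed, taking $a\equiv 0$, $B$ tiny, $y_1\equiv A$ early and then decaying to zero, and all the mass of $y_2$ concentrated near $T$ with $\int_0^T y_2\,dt=A$ shows that the summed conclusion cannot be reached by any repair of this step when $\delta<2$.)

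A second, smaller omission relative to the paper is the case $A=0$. Your strict-improvement equivalence $BM^{\beta+1}T<(\delta-1)A$ degenerates to $0<0$ when $A=0$ (since then $M=\delta AE=0$), so the openness of $I$ fails and your argument yields nothing, whereas the statement still asserts $y_1\equiv 0$ and $\int_0^T y_2\,dt=0$ in that case. The paper treats $A=0$ separately by running the identical bootstrap with an arbitrary threshold $\theta>0$ in place of $\delta AE$ and concluding by letting $\theta\searrow 0$; you need to add this (or an equivalent) limiting argument.
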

    \begin{proof}
            We assume first that $A>0$, set $\theta:=\delta A E$, and define
            \begin{align*}
                \mathcal{I}_\theta:=\left\{T^{\prime} \in[0, T]: \Upsilon\left(T^{\prime}\right):=\sup _{t \in\left[0, T^{\prime}\right]} y_1(t) \leq \theta\right\}.
            \end{align*}
            Since $y_1(0) \leq A<\theta$ and since $\Upsilon$ is continuous and increasing, we have $\mathcal{I}_\theta=$ $\left[0, T_M\right]$ for some $0<T_M \leq T$. For every $T^{\prime} \in\left[0, T_M\right]$ we have
            \begin{align*}
                y_1\left(T^{\prime}\right) & \leq A+\int_0^{T^{\prime}} a(t) y_1(t)\ d t+\sup _{t \in\left[0, T^{\prime}\right]}\left(B_1 y_1^{\beta_1}(t) + B_2 y_1^{\beta_2}(t) \right) \int_0^{T^{\prime}} y_1(t)\ dt \\
                & \leq A+\int_0^{T^{\prime}} a(t) y_1(t) \ d t+ T \left(B_1 \theta^{1+\beta_1} +  B_2\theta^{1+\beta_2}\right) 
            \end{align*}
            An application of the classical Gronwall Lemma, the condition (\ref{cond_prop31}), and the choice of $\theta$ yield that for all $T^{\prime} \in\left[0, T_M\right]$, we have
            \begin{align*}
            y_1\left(T^{\prime}\right)\leq\left(A+ T \left(B_1 \theta^{1+\beta_1} +  B_2\theta^{1+\beta_2}\right)\right) E < \theta .
            \end{align*}
            This implies $\Upsilon\left(T_M\right)<\theta$, hence $T_M=T$, and thus proves the Lemma if $A>0$. If $A=0$, then the above argument holds for every $\theta>0$, and we deduce that $y_1(t)=y_2(t)=0$ for all $t \in[0, T]$.
    \end{proof}
    \begin{Remark}
        The main advantage of the adjusted version, in addition to the fact that it is exactly tailored for the type of estimate that will result in the following, is that we can replace the factor $(1+T)$ in the condition of \cite[Proposition 6.2]{Bartels2015nonlinear} by the factor $T$.
        Similar results also work for more general functions in $y_1$ other than $B_1 y_1^{\beta_1} + B_2 y_1^{\beta_2}$, e.g. all polynomials.
    \end{Remark}
    
    \noindent Let $C_S$ denote the constant of the Sobolev embedding $H^1(\mathbb{T}^d) \hookrightarrow L^6(\mathbb{T}^d)$.
    Let $(\rho,c)$ and $(\bar{\rho},\bar{c})$ be as above. 
    In order to apply Proposition \ref{adj_gengronwall} to the Keller-Segel system, we perform energy estimates similar to \cite[Section 3]{Kwon2023}, that is subtracting the first line in (\ref{res_bdrKS}) from (\ref{KS1}), testing with $e := (\rho - \bar{\rho})$ and using the Cauchy-Schwarz inequality. 
    We obtain
    \begin{align*}
        \frac{d}{dt} \bigg(\frac12 \| e \|_{L^2}^2\bigg) + |e|_{H^1}^2 
        \leq \underbrace{|e|_{H^1} \| \bar{\rho} \|_{L^3} |c-\bar{c}|_{W^{1,6}} + |e|_{H^1} \| e \|_{L^2} \| \nabla \bar{c}\|_{L^\infty} + \| R_{\bar{\rho}} \|_{(H^1(\mathbb{T}^d))^\prime} \| e \|_{H^1}}_{=:\  I} \quad &\\
            + \underbrace{|e|_{H^1}\| e \|_{L^3} |c-\bar{c}|_{W^{1,6}}}_{=: \ II} &.
    \end{align*}
    We treat $I$ similar to \cite[Section 3]{Kwon2023}, only slightly adjusting the constants in Young's inequality, which gives
    \begin{align*}
        I \leq (2C_S^2 C_{ell}^2 \| \bar{\rho} \|_{L^3}^2 + 2\| \nabla \bar{c}\|_{L^\infty}^2 + \frac{1}{16})\|e\|_{L^2}^2 + 6\| R_{\bar{\rho}} \|_{(H^1(\mathbb{T}^d))^\prime}^2 + \frac38 |e|_{H^1}^2,
    \end{align*}

    \noindent In contrast to \cite[Section 3]{Kwon2023}, we further estimate $II$ by
    \begin{align*}
        |e|_{H^1}\| e \|_{L^3} |c-\bar{c}|_{W^{1,6}} &\leq C_S C_{\text{ell}} |e|_{H^1}\| e \|_{L^3} \|e\|_{L^2} \\
        &\leq C_S^{3/2} C_{\text{ell}} |e|_{H^1} \|e\|_{H^1}\|e\|_{L^2}^{3/2} \\
         & =  C_S^{3/2} C_{\text{ell}} \left( \|e\|_{L^2}^2 |e|_{H^1} + \|e\|_{L^2}^{3/2} |e|_{H^1}^{3/2} \right)\\
        & \leq  \frac45 C_S^3 C_\text{ell}^2\|e\|_{L^2}^4 + \frac{432}{125} C_S^6 C_{\text{ell}}^4\|e\|_{L^2}^6  + \frac58 |e|_{H^1}^2,
    \end{align*}
    where we used the Sobolev embedding $H^1 \hookrightarrow L^6$ and elliptic regularity in the first step, Hölder's inequality and again the Sobolev embedding $H^1 \hookrightarrow L^6$ in the second step and Young's inequality in the fourth.
    Combining the estimates of $I$ and $II$, we obtain in total
    \begin{align}
        \label{energy_esti}
        \frac{d}{dt} \| e \|_{L^2}^2 &\leq (4 C_S^2 C_{ell}^2 \| \bar{\rho} \|_{L^3}^2 + 4\| \nabla \bar{c}\|_{L^\infty}^2 + B_1 \|e\|_{L^2}^2 + B_2\|e\|_{L^2}^4 + \frac18 )\|e\|_{L^2}^2 + 12\| R_{\bar{\rho}} \|_{(H^1(\mathbb{T}^d))^\prime}^2,
    \end{align}
    with $B_1 := \frac85 C_S^3 C_\text{ell}^2$ and $B_2 := \frac{864}{125}C_S^6 C_{\text{ell}}^4$.
    Integrating in time and setting
    \begin{align*}
        y_1(t) & :=\|\rho(t, \cdot)-\bar{\rho}(t, \cdot)\|_{L^2}^2,\quad y_2(t) :=|\rho(t, \cdot)-\bar{\rho}(t, \cdot)|_{H^1}^2, \\
        a(t) & :=4 C_S^2 C_{\text {ell }}^2\|\bar{\rho}(t, \cdot)\|_{L^3}^2+4\|\nabla \bar{c}(t, \cdot)\|_{L^{\infty}}^2+\frac{1}{8}, \\
        A & := \norm{\rho(0,\cdot) - \bar{\rho}(0,\cdot)}_{L^2}^2 + 12\int_0^T \norm{R_{\bar{\rho}}}^2_{(H^1(\mathbb{T}^d))^\prime} \ dt,
    \end{align*}
    as well as $E:=\exp\left( \int_0^T a(t)\ dt \right)$, $\beta_1 := 1$, $\beta_2 := 2$ and $B_1$, $B_2$ as above, we apply Proposition \ref{adj_gengronwall} to get
    \begin{Theorem}
        \label{stab_thm}
        Let $d \in \left\{2,3\right\}$. Suppose $(\rho,c)$ is a weak solution of the Keller-Segel system (\ref{KS1}), (\ref{KS2}), (\ref{KScond}) and $(\bar{\rho},\bar{c})$ be a solution to (\ref{res_bdrKS}), as above.
        Then, for any $\delta>1$, such that
        \begin{align}
            \label{stab_cond}
            B_1 \delta A E + B_2 \left(\delta A E\right)^{2} < \frac{\delta -1}{\delta T E}
        \end{align}
        is satisfied, it holds
        \begin{align*}
            &\sup_{t \in [0,T]}\norm{\rho(t,\cdot) - \bar{\rho}(t,\cdot)}_{L^2}^2 \leq \delta AE. % \\
            % &\quad\quad\quad\quad\quad\quad\quad\quad\quad\quad\leq 8 \left( \frac{1}{2} \norm{\rho(0,\cdot) - \bar{\rho}(0,\cdot)}_{L^2(\Omega)}^2 + \int_0^T \norm{R_{\bar{\rho}}}^2_{(H^1(\mathbb{T}^d))^\prime(\Omega)} dt \right) \times \\
            % & \quad\quad\quad\quad\quad\quad\quad\quad\quad\quad\quad\quad \exp\left( \int_0^T 3 C_S^2 C_{\text{ell}}^2 \norm{\bar{\rho}(t,\cdot)}_{L^3(\Omega)}^2 + 3\norm{\nabla \bar{c}(t,\cdot)}^2_{L^\infty(\Omega)}+\frac{1}{3} \ dt \right).
        \end{align*}
    \end{Theorem}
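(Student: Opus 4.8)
The plan is to recognize the assertion as a direct application of the Adjusted Generalized Gronwall Lemma (Proposition~\ref{adj_gengronwall}) to the energy estimate (\ref{energy_esti}) that has already been established. The remaining work is purely a matter of recasting the pointwise-in-time differential inequality into the integral form (\ref{nonlin_term}) and checking that the regularity hypotheses on $y_1$, $y_2$, and $a$ are met, so that the lemma may be invoked with the indicated data.

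First I would integrate (\ref{energy_esti}) in time over $[0,T']$ for an arbitrary $T' \in [0,T]$. Since the integrand $\| R_{\bar{\rho}} \|_{H^{-1}}^2$ is non-negative, enlarging the upper limit from $T'$ to $T$ only increases the right-hand side, which lets me absorb the residual contribution, together with the initial-data term $\| e(0,\cdot) \|_{L^2}^2$, into the constant $A$ independently of $T'$. With $y_1 := \| e \|_{L^2}^2$ and $y_2 := |e|_{H^1}^2$ this yields
\begin{align*}
    y_1(T') + \int_0^{T'} y_2(t)\,dt \leq A + \int_0^{T'} a(t)\, y_1(t)\,dt + B\int_0^{T'} y_1(t)^3\,dt,
\end{align*}
with $a$ and $B$ exactly as defined preceding the statement.

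The one genuinely non-routine manipulation is matching the cubic term $B\int_0^{T'} y_1^3\,dt$ to the structurally rigid nonlinear term $B\sup_{t\in[0,T']} y_1^\beta(t)\int_0^{T'} y_1(t)\,dt$ that appears in (\ref{nonlin_term}). Writing $y_1^3 = y_1^2 \cdot y_1$ and extracting the first factor from the integral via $y_1^2(t) \leq \sup_{s\in[0,T']} y_1^2(s)$ produces precisely this form and forces the choice $\beta = 2$; correspondingly, condition (\ref{cond_prop31}) specializes to exactly (\ref{stab_cond}).

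Finally I would verify the regularity prerequisites of Proposition~\ref{adj_gengronwall}: that $y_1 \in C([0,T])$ follows from the time-continuity of the weak solution $\rho$ and of $\bar{\rho} \in C(0,T;H^1)$; that $y_2 \in L^1([0,T])$ follows from $e \in L^2(0,T;H^1)$; and that $a \in L^\infty([0,T])$ follows from $\bar{\rho} \in C(0,T;H^1) \hookrightarrow C(0,T;L^3)$ together with boundedness of $\|\nabla\bar{c}\|_{L^\infty}$ on $[0,T]$. Once these are in place, applying Proposition~\ref{adj_gengronwall} returns $\sup_{t\in[0,T]} y_1(t) + \int_0^T y_2(t)\,dt \leq \delta A E$, which is the claim. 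I expect the boundedness of $a$, and specifically the control of $\|\nabla\bar{c}\|_{L^\infty}$, to be the only point requiring care, since in three space dimensions $H^2$ does not embed into $W^{1,\infty}$, so one must invoke the additional smoothness of the reconstruction $\bar{c}$ rather than its $H^2$-regularity alone.
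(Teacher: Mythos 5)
Your proposal is correct and takes essentially the same route as the paper: the paper likewise integrates (\ref{energy_esti}) in time over $[0,T']$, absorbs the initial datum and the residual term into $A$, identifies the nonlinearity with the choice $\beta = 2$, and invokes Proposition~\ref{adj_gengronwall}. The points you spell out explicitly --- enlarging the residual integral to $[0,T]$, extracting $\sup_{[0,T']} y_1^2$ from the cubic term, and checking the regularity hypotheses on $y_1$, $y_2$, $a$ --- are exactly the steps the paper leaves implicit.
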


    \noindent By slightly modifying the arguments, analogous to \cite{Kolbe2023,Kwon2023}, we could use the generalized Gronwall Lemma to control the $L^2(0,T;H^1)$-norm in addition to the $L^\infty(0,T;L^2)$-norm. This would involve a slight worsening of the constants. 
    For our purpose of a posteriori existence, control of the $L^\infty(0,T;L^2)$-norm is sufficient, see Proposition \ref{apostverif}.
    
    % Note that for $d=2$, the term $\norm{\nabla \bar{c}(t,\cdot)}^2_{L^\infty(\Omega)}$ can be bounded up to a constant by $\norm{\bar{\rho}(t,\cdot)}^2_{L^{3}(\Omega)}$, due to elliptic regularity and Sobolev embedding $H^2(\Omega) \hookrightarrow L^\infty(\Omega)$ with constant $C_S^{\prime \prime}$, i.e.
    % \begin{align*}
    %     \norm{\nabla \bar{c}(t,\cdot)}_{L^\infty(\Omega)} \leq C_S^{\prime \prime} \norm{\nabla \bar{c}(t,\cdot)}_{H^2(\Omega)} \leq C_S^{\prime \prime} C_{\text{ell}} \norm{\nabla \bar{\rho}(t,\cdot)}_{L^2(\Omega)} \leq  C_S^{\prime \prime} C_{\text{ell}} C_S^{\prime} \norm{\bar{\rho}(t,\cdot)}_{L^3(\Omega)}.
    % \end{align*}

    \begin{Remark} 
        Due to the residual estimates in the $(H^1(\mathbb{T}^d))^\prime$-norm, that will follow in Section~\ref{aposterirorierror}, and their scaling behavior, see Section~\ref{num_sim}, we anticipate the factor $A$ to be small, whenever $(\bar{\rho},\bar{c})$ results from a convergent numerical scheme, and to go to zero under mesh refinement.
        Consequently, the condition (\ref{stab_cond}) can be expected to hold true for sufficiently fine meshes. 
        Furthermore, small time-intervals $[0,T]$ make it easier to satisfy condition (\ref{stab_cond}). 
        However, large constants $C_S$ and $C_\text{ell}$, especially for $d=3$, pose a challenge in practice, see Remark \ref{bad_constants}.
    \end{Remark}

    \noindent We can formulate an \emph{a posteriori} verifiable existence statement as a direct consequence of Proposition~\ref{apostverif}, similar to \cite[Corollary 3.5]{Kwon2023}.
        \begin{Corollary}
            Let $\rho_0 \in L^2$ and let $T_{\text{max}}>0$ be the maximal existence time of the weak solution $(\rho,c)$ to (\ref{KS1}), (\ref{KS2}), (\ref{KScond}).
            Let $\tilde{\rho} \in L^\infty(0,T;L^2) \cap L^2(0,T;H^1)$ be a reconstruction of a numerical approximation to $\rho$.
            Suppose that for some $T>0$, one has $\| \tilde{\rho}(t,\cdot)\|_{L^\infty(0,T;L^2)} < \infty$. Suppose further that (\ref{stab_cond}) is satisfied and that $A$ as well as $E$ are finite.
            Then $T < T_{\text{max}}$, i.e. the weak solution exists beyond time $T$.
        \end{Corollary}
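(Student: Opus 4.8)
The plan is to obtain the Corollary as a direct application of Proposition~\ref{apostverif}, with Theorem~\ref{stab_thm} supplying the finite error estimator $\theta_\Omega$ demanded there. I would identify the reconstruction $\tilde{\rho}$ with the solution $\bar{\rho}$ of the perturbed system (\ref{res_bdrKS}) (for the residual $R_{\bar{\rho}}$ induced by this reconstruction) and set $\theta_\Omega := \delta A E$, which is finite by the assumption that $A$ and $E$ are finite.

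The key step is to verify the uniform-in-time bound (\ref{unif_bound}) for this $\theta_\Omega$. Theorem~\ref{stab_thm} would at once give
\[
\sup_{t\in[0,T]}\norm{\rho(t,\cdot)-\bar{\rho}(t,\cdot)}_{L^2}^2 \le \delta A E = \theta_\Omega,
\]
were the weak solution $(\rho,c)$ already known to exist on all of $[0,T]$ -- but that existence is precisely what we are trying to prove. To sidestep this circularity I would invoke Theorem~\ref{stab_thm} only on truncated intervals $[0,T']$ with $T' < \min\{T,T_{\text{max}}\}$, where the weak solution is guaranteed to exist. Since $A$ and $E$ are built from time integrals over the interval and are hence non-decreasing in its length, while the right-hand side of the stability condition (\ref{stab_cond}) only increases as the interval shrinks, condition (\ref{stab_cond}) persists on every such subinterval; thus Theorem~\ref{stab_thm} yields $\sup_{t\in[0,T']}\norm{\rho(t,\cdot)-\bar{\rho}(t,\cdot)}_{L^2}^2 \le \delta A E$ uniformly in $T'$. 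Passing to the limit $T'\nearrow\min\{T,T_{\text{max}}\}$ delivers (\ref{unif_bound}).

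With (\ref{unif_bound}) established and $\norm{\tilde{\rho}}_{L^\infty(0,T;L^2)}<\infty$ assumed, Proposition~\ref{apostverif} applies directly and returns $T<T_{\text{max}}$. Unwinding that proposition, the underlying mechanism is the contradiction argument: if one had $T_{\text{max}}\le T$, the triangle inequality combined with the uniform bound would cap $\norm{\rho(t,\cdot)}_{L^2}$ on $[0,T_{\text{max}})$, contradicting $\norm{\rho(t,\cdot)}_{L^2}\to\infty$ as $t\nearrow T_{\text{max}}$, which follows from Lemma~\ref{LemmaKwon} and Remark~\ref{blowup-remark} since $2>\tfrac{d}{2}$ for $d\le 3$.

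The main obstacle is exactly the circularity flagged above: Theorem~\ref{stab_thm} carries the standing hypothesis that a weak solution exists on $[0,T]$, so it cannot be applied on $[0,T]$ before that existence is known. The truncation-and-limit device, justified by the monotonicity of $A$ and $E$ in the interval length and the consequent persistence of (\ref{stab_cond}) on subintervals, is what renders the argument non-circular and is the only genuinely delicate point; the remainder is bookkeeping inherited from Proposition~\ref{apostverif}.
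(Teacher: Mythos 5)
Your proof is correct and follows essentially the same route as the paper, which obtains the Corollary as a direct consequence of Proposition~\ref{apostverif} combined with Theorem~\ref{stab_thm} supplying the finite estimator $\theta_\Omega = \delta A E$. Your truncation-and-limit argument on subintervals $[0,T']$ with $T' < \min\{T,T_{\text{max}}\}$, justified by the monotonicity of $A$, $E$ and of the right-hand side of (\ref{stab_cond}) in the interval length, rigorously supplies the non-circularity detail that the paper leaves implicit.
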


    % The statement reads as
    % \begin{Proposition}[A posteriori verifiable regularity, see Corollary 3.4 in \cite{Kwon2023}]
    %     \label{apostverif}
    %     Let $\rho_0 \in L^2(\Omega)$ and let $T_{\text{max}}>0$ be the maximal existence time of the weak solution $(\rho,c)$ to (\ref{KS1}) and (\ref{KS2}) equipped with (\ref{KS.bdr.1}) - (\ref{KS.init.1}).
    %     Suppose that there exists an approximate strong solution $(\bar{\rho},\bar{c})$ of (\ref{res_bdrKS}) such that at some time $T>0$ the condition (\ref{stab_cond}) is satisfied, $\norm{\bar{\rho}}_{L^\infty(0,T;L^2(\Omega))}$ is finite and the right-hand side of the stability estimate in Theorem~\autoref{stab_thm} is finite.
    %     Then $T_{\text{max}}>T$, i.e. the weak solution exists at least until time $T$.
    % \end{Proposition}
    % \noindent This means that given a reconstruction $\bar{\rho}$ of our numerical solution and given a posteriori residual bounds such that the conditions of Proposition~\autoref{apostverif} hold true up to some time $T$, we can verify a posteriori that the weak solution exists at least up to the time $T$. 
    % Especially, the solution did not blow up until this moment.

    \subsection{Stability through a local-in-time continuation argument}
    \label{cont_arg}
        This stability framework is inspired by the local-in-time continuation argument used in \cite{Kyza2016}.
        As a time discretization, we choose the potentially non-uniform time steps $t^n \in [0,T]$ for $n = 0, 1, \dots, N_t$ with the time step sizes $\Delta t^n := t^{n+1}-t^n \geq 0$ such that $\sum_{n = 0}^{N_t-1} \Delta t^n = T$. 
        
        % \noindent Let $(\rho,c)$ and $(\bar{\rho},\bar{c})$ be as in Theorem~\ref{stab_thm}. 
        \begin{Theorem}[Stability through local-in-time continuation]
            \label{stab_cont}
            Let $d \in \left\{2,3\right\}$ and let the domain $\Omega$ satisfy (\ref{ell_reg}). Suppose $(\rho,c)$ is a weak solution of the Keller-Segel system (\ref{KS1}), (\ref{KS2}), (\ref{KScond}) and $(\bar{\rho},\bar{c})$ be a solution of (\ref{res_bdrKS}), as above.
            Then, provided for each $n = 0, 1, \ldots, N_t-1$,
            \begin{align}
                \label{root}
                \delta \mapsto \Delta t^n \left( B_1 \delta \tilde{A}_n E_n + B_2 \delta^2 \tilde{A}_n^2 E_n^2\right) - \log(\delta)
            \end{align}
            has a root, $\delta_n$, it holds
            \begin{align*}
                &\sup_{t \in [0,T]}\norm{\rho(t,\cdot) - \bar{\rho}(t,\cdot)}_{L^2}^2  \leq \delta_{N_t -1} \tilde{A}_{N_t -1} E_{N_t -1},
            \end{align*}
            where 
            \begin{align}
                \label{iter_def}
                E_n &:= \exp\bigg(\int_{t^n}^{t^{n+1}} 4 C_S^2 C_{ell}^2 \| \bar{\rho}(s,\cdot) \|_{L^3}^2 + 4\| \nabla \bar{c}(s,\cdot)\|_{L^\infty}^2 + \frac18 \  ds \bigg) \ , \notag \\
                \tilde{A}_0 &:= \|\rho(0,\cdot) - \bar{\rho}(0,\cdot)\|_{L^2}^2 + 12\int_{0}^{t^{1}} \| R_{\bar{\rho}}(s,\cdot) \|_{(H^1(\mathbb{T}^d))^\prime}^2 \ ds, \notag \\
                \tilde{A}_{n+1} &:= \delta_n \tilde{A}_n E_n + 12\int_{t^{n+1}}^{t^{n+2}} \| R_{\bar{\rho}}(s,\cdot) \|_{(H^1(\mathbb{T}^d))^\prime}^2 \ ds.
            \end{align}
        \end{Theorem}
        \begin{proof}
            Integrating (\ref{energy_esti}) locally in time and applying the classical Gronwall Lemma leads to
            \begin{align}
                \label{noncomp_esti}
                \| e(t,\cdot)\|_{L^2}^2 \leq A_n E_n \mathcal{H}_n(t),
            \end{align}
            for any $t \in [t^n,t^{n+1}]$, where
            \begin{align*}
            A_n &:= \|e(t^n,\cdot)\|_{L^2}^2 + 12\int_{t^n}^{t^{n+1}} \| R_{\bar{\rho}}(s,\cdot) \|_{(H^1(\mathbb{T}^d))^\prime}^2 \ ds \ , \\
            % E_n &:= \exp\bigg(\int_{t^n}^{t^{n+1}} 4 C_S^2 C_{ell}^2 \| \bar{\rho}(s,\cdot) \|_{L^3}^2 + 4\| \nabla \bar{c}(s,\cdot)\|_{L^\infty}^2 + \frac18 \  ds \bigg) \ ,\\
            \mathcal{H}_n(t) &:= \exp\bigg(  \int_{t^n}^{t} B_1\|e(s,\cdot)\|_{L^2}^2 + B_2\|e(s,\cdot)\|_{L^2}^4 \ ds \bigg).
        \end{align*}
        In a next step, we bound $\mathcal{H}_n(t)$ using the following continuation argument.
        We define
        \begin{align*}
            \mathcal{I}_n := \left\{t \in [t^n,t^{n+1}] : \sup_{s \in [t^n,t]}\norm{e(s,\cdot)}_{L^2}^2 \leq \delta_n A_n E_n \right\},
        \end{align*}
        for some constant $\delta_n > 1$, which will later be chosen as close to $1$ as possible.
        Note that $ t^n \in \mathcal{I}_n$, hence $\mathcal{I}_n$ is non-empty and due to its compactness attains a maximal value. 
        We want to show by contradiction that $\mathcal{I}_n = [t^n,t^{n+1}]$.
        To this end, assume $t^* := \max \mathcal{I}_n$ satisfies $t^* < t^{n+1}$.
        Then (\ref{noncomp_esti}) implies
        \begin{align*}
            \sup_{t \in [t^n,t^*]}\| e(t,\cdot)\|_{L^2}^2 &\leq A_n E_n \mathcal{H}_n(t^*) \\
            & \leq A_n E_n \exp\bigg(  \Delta t^n \sup_{t \in [t^n,t^*]} \left( B_1 \|e(t,\cdot)\|_{L^2}^2 + B_2 \|e(t,\cdot)\|_{L^2}^4\right) \bigg) \\
            & \leq A_n E_n \exp\bigg( \Delta t^n \left( B_1 \delta_n A_n E_n + B_2 (\delta_n A_n E_n)^2\right) \bigg).
        \end{align*}
        Next, we choose $\delta_n > 1$ such that
        \begin{align*}
            \exp\bigg( \Delta t^n \left( B_1 \delta_n A_n E_n + B_2 (\delta_n A_n E_n)^2\right) \bigg) \leq (1 - \alpha)\delta_n
        \end{align*}
        for some $\alpha \in (0,1)$. Then, with the estimate above, we find 
        \begin{align*}
            \sup_{t \in [t^n,t^*]}\| e(t,\cdot)\|_{L^2}^2 \leq A_n E_n (1 - \alpha)\delta_n <  A_n E_n \delta_n,
        \end{align*}
        and so $t^* < t^{n+1}$ cannot be the maximal value of $\mathcal{I}_n$. Consequently, the maximal value is $t^{n+1}$, which in turn implies that $\mathcal{I}_n = [t^n,t^{n+1}]$.
        Additionally, letting $\alpha \searrow 0$, we select $\delta_n$ as the smallest root of
        \begin{align*}
             \Delta t^n \left( B_1 \delta_n A_n E_n + B_2 \delta_n^2 A_n^2 E_n^2\right) - \log(\delta_n) \stackrel{!}{=} 0,
        \end{align*}
        whenever it exists. 
        In this case, we obtain the local-in-time estimate
        \begin{align*}
            \sup_{t \in [t^n,t^{n+1}]}\norm{e(t,\cdot)}_{L^2}^2 \leq \delta_n A_n E_n,
        \end{align*}
        for all $n = 0, 1, \ldots, N_t-1$, which can iteratively be extended to a global stability estimate.
        We prove this by induction.
        Let $\tilde{A}_{n+1}$ be as in (\ref{iter_def}).
        % \begin{align}
        %     \label{iter_def}
        %     \tilde{A}_0 &:= A_0, \notag \\
        %     \tilde{A}_{n+1} &:= \Psi_n + 12\int_{t^{n+1}}^{t^{n+2}} \| R_{\bar{\rho}}(s,\cdot) \|_{(H^1(\mathbb{T}^d))^\prime}^2 \ ds, \quad \text{with} \quad \Psi_n := \delta_n \tilde{A}_n E_n.
        % \end{align}
        Setting $n=0$, the arguments above lead to $$\sup_{t \in [t^0,t^1]} \norm{\rho(t,\cdot) - \bar{\rho}(t,\cdot)}_{L^2}^2 \leq \delta_0 A_0 E_0.$$
        Now assume the estimate $$\sup_{t \in [t^0,t^n]} \norm{\rho(t,\cdot) - \bar{\rho}(t,\cdot)}_{L^2}^2 \leq \delta_{n-1} A_{n-1} E_{n-1}$$ holds for some $n \in  \{0, 1, \ldots, N_t-1\}$.
        Then,
        \begin{align*}
            \sup_{t \in [t^0,t^{n+1}]} \norm{\rho(t,\cdot) - \bar{\rho}(t,\cdot)}_{L^2}^2 = \max\bigg\{  \sup_{t \in [t^0,t^{n}]} \norm{\rho(t,\cdot) - \bar{\rho}(t,\cdot)}_{L^2}^2,  \sup_{t \in [t^n,t^{n+1}]} \norm{\rho(t,\cdot) - \bar{\rho}(t,\cdot)}_{L^2}^2 \bigg\}.
        \end{align*}
        We observe
        \begin{align*}
            \sup_{t \in [t^0,t^{n}]} \norm{\rho(t,\cdot) - \bar{\rho}(t,\cdot)}_{L^2}^2 \leq \delta_{n-1} A_{n-1} E_{n-1} \leq \delta_{n} A_{n} E_{n}. 
        \end{align*}
        In order to show
        \begin{align*}
             \sup_{t \in [t^n,t^{n+1}]} \norm{\rho(t,\cdot) - \bar{\rho}(t,\cdot)}_{L^2}^2 \leq \delta_{n} A_{n} E_{n},
        \end{align*}
        we first note that $A_n \leq \tilde{A}_{n}$, as $\|e(t^n,\cdot)\|_{L^2}^2 \leq \sup_{t \in [t^{n-1},t^{n}]}\|e(t,\cdot)\|_{L^2}^2 \leq \delta_{n-1} A_{n-1} E_{n-1}$ and 
        then repeat all arguments of the local estimates above, starting from (\ref{noncomp_esti}), while replacing $A_n$ by $\tilde{A}_n$.
        All in all, we find
        \begin{align*}
            \sup_{t \in [t^0,t^{n+1}]} \norm{\rho(t,\cdot) - \bar{\rho}(t,\cdot)}_{L^2}^2 \leq \delta_{n} A_{n} E_{n},
        \end{align*}
        whenever there is a root $\delta_n>1$ of (\ref{root}) for all $n = 0,1,\ldots,N_t-1$.
        % Then due to 
        % \begin{align*}
        %     \norm{e(t^{n+1},\cdot)}_{L^2}^2 \leq \sup_{t \in [t^n,t^{n+1}]}\norm{e(t,\cdot)}_{L^2}^2\leq \Psi_n
        % \end{align*}
        % for $n = 0, 1, \ldots, N_t-1$, we obtain
        %     Let $n^* \in \{0, 1, \ldots, N_t-1\}$ be such that 
        %     \begin{align*}
        %         \sup_{t \in [0,T]} \norm{\rho(t,\cdot) - \bar{\rho}(t,\cdot)}_{L^2} = \sup_{t \in [t^{n^*},t^{n^*+1}]} \norm{\rho(t,\cdot) - \bar{\rho}(t,\cdot)}_{L^2}.
        %     \end{align*}
        %     Then
        %     \begin{align*}
        %         \sup_{t \in [0,T]} \norm{\rho(t,\cdot) - \bar{\rho}(t,\cdot)}_{L^2} \leq A_{n^*}E_{n^*}\delta_{n^*} \leq \tilde{\Psi}_{n^*}.
        %     \end{align*}
        \end{proof}
        % Note again that $\norm{\nabla \bar{c}(t,\cdot)}^2_{L^\infty(\Omega)} \leq C_\varepsilon^2 \norm{\bar{c}(t,\cdot)}^2_{W^{2,d+\varepsilon}(\Omega)}\leq C_\varepsilon^2 C_\text{ell}^2 \norm{\bar{\rho}(t,\cdot)}^2_{L^{d+\varepsilon}(\Omega)}$, for any $\varepsilon > 0$,
        % where the constant $C_\varepsilon> 0$ can be estimated explicitly, see \cite{Mizuguchi2017}.
        \begin{Remark}
            \label{bad_constants}
            This stability estimate is also conditional and the existence of a root of (\ref{root}) depends on the smallness of the pre-factors $B_2\Delta t^n A_n^2 E_n^2$ and $B_1\Delta t^n A_n E_n$. 
            %If these roots do not exist for all $n=0,1,\ldots,N_t-1$, then this stability framework breaks down.
            Similar to the approach using the Generalized Gronwall Lemma, we expect the factors $A_n$ to be small, whenever the mesh size $h$ is sufficiently small, which favors the existence of the required roots.
            Nevertheless, the lack of sharp upper bounds of constants $C_S$, $C_S^\prime$ and $C_\text{ell}$, especially in the case of $d=3$, see \cite{Mizuguchi2017}, restricts the practicality of both conditional stability frameworks presented in this section.
        \end{Remark}
        \noindent We can again formulate an \emph{a posteriori} verifiable existence statement as a direct consequence of Proposition~\ref{apostverif}. It reads as
        \begin{Corollary}
            Let $\rho_0 \in L^2$ and let $T_{\text{max}}>0$ be the maximal existence time of the weak solution $(\rho,c)$ to (\ref{KS1}), (\ref{KS2}), (\ref{KScond}).
            Let $\tilde{\rho} \in L^\infty(0,T;L^2) \cap L^2(0,T;H^1)$ be a reconstruction of a numerical approximation to $\rho$.
            Suppose that for some $T>0$, one has $\| \tilde{\rho}(t,\cdot)\|_{L^\infty(0,T;L^2)} < \infty$. Suppose further that for every $n = 0,1,\ldots,N_t-1$, (\ref{root}) has a root and that $A_n$ as well as $E_n$ are finite.
            Then $T < T_{\text{max}}$, i.e. the weak solution exists beyond time $T$.
        \end{Corollary}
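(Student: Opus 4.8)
The plan is to deduce the statement directly from Proposition~\ref{apostverif} by producing the finite error estimator $\theta_\Omega$ required there, and to obtain that estimator from the local-in-time stability bound of Theorem~\ref{stab_cont}. Observe that the hypotheses of the Corollary are exactly those of Proposition~\ref{apostverif} --- namely $\rho_0 \in L^2$, the existence of the maximal time $T_{\text{max}}$, and $\norm{\tilde{\rho}}_{L^\infty(0,T;L^2)} < \infty$ --- augmented by the two extra assumptions needed to run Theorem~\ref{stab_cont}, i.e. that (\ref{root}) admits a root for each $n = 0,1,\ldots,N_t-1$ and that all $A_n, E_n$ are finite. Throughout, the reconstruction $\tilde{\rho}$ plays simultaneously the role of $\bar{\rho}$ in Theorem~\ref{stab_cont} and of the reconstruction in Proposition~\ref{apostverif}, with the extra regularity $\tilde{\rho} \in L^2(0,T;H^1)$ ensuring that the $H^1$-seminorm terms appearing in the energy estimate are well defined.

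First I would construct $\theta_\Omega$. Applying the local-in-time continuation estimate subinterval by subinterval exactly as in the derivation of Theorem~\ref{stab_cont} --- using that the roots $\delta_n$ of (\ref{root}) exist and that the associated $A_n, E_n$ are finite --- and iterating the definition (\ref{iter_def}), one obtains
\begin{align*}
    \sup_{t \in [0,\min\{T,T_{\text{max}}\})} \norm{\rho(t,\cdot) - \tilde{\rho}(t,\cdot)}_{L^2}^2 \leq \Psi_{N_t-1} =: \theta_\Omega < \infty.
\end{align*}
Finiteness follows by induction on $n$: $\Psi_0 = \delta_0 A_0 E_0$ is a finite product, and $\Psi_{n+1} = \delta_{n+1} A_{n+1} E_{n+1}$ is finite whenever $\Psi_n$ is, since $A_{n+1}$ is the sum of $\Psi_n$ and a finite residual integral. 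Moreover $\Psi_{n+1} \geq \delta_{n+1} E_{n+1} \Psi_n \geq \Psi_n$ because $\delta_{n+1} > 1$ and $E_{n+1} \geq 1$, so the sequence is monotone increasing and $\Psi_{N_t-1}$ indeed dominates the bound on the possibly shorter interval $[0,\min\{T,T_{\text{max}}\})$.

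With this finite $\theta_\Omega$, the conclusion is immediate: the displayed estimate is precisely the hypothesis (\ref{unif_bound}) of Proposition~\ref{apostverif}, and $\norm{\tilde{\rho}}_{L^\infty(0,T;L^2)} < \infty$ is assumed, so Proposition~\ref{apostverif} yields $T < T_{\text{max}}$. Unwinding that proposition, the triangle inequality combines the uniform error bound with the uniform bound on $\tilde{\rho}$ to keep $\norm{\rho(t,\cdot)}_{L^2}$ bounded as $t \nearrow T_{\text{max}}$; since $d \leq 3$, this contradicts the blow-up alternative of Lemma~\ref{LemmaKwon} together with Remark~\ref{blowup-remark}, ruling out $T_{\text{max}} \leq T$.

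The one genuinely delicate point --- and the step I would be most careful to state explicitly --- is the apparent circularity of invoking a stability estimate that nominally presupposes a weak solution on $[0,T]$ in order to establish existence on $[0,T]$. The resolution is that Theorem~\ref{stab_cont} is applied only on $[0,\min\{T,T_{\text{max}}\})$, where the maximal weak solution is guaranteed to exist, so the estimate is used legitimately; the contradiction with the blow-up criterion then excludes $T_{\text{max}} \leq T$ \emph{a posteriori}. Everything else is a verbatim citation of Proposition~\ref{apostverif} together with a finiteness check for the constant produced by Theorem~\ref{stab_cont}, exactly paralleling the Gronwall-based Corollary stated after Theorem~\ref{stab_thm}.
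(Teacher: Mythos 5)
Your proposal is correct and follows exactly the route the paper intends: the corollary is stated as a direct consequence of Proposition~\ref{apostverif}, with the estimator $\theta_\Omega = \Psi_{N_t-1}$ supplied by Theorem~\ref{stab_cont}, and your contradiction argument via Lemma~\ref{LemmaKwon} and Remark~\ref{blowup-remark} is precisely the content of that proposition's proof. Your added details --- the inductive finiteness and monotonicity of the $\Psi_n$ (using $\delta_n > 1$, $E_n \geq 1$) and the observation that the stability estimate is legitimately applied only on $[0,\min\{T,T_{\text{max}}\})$ to avoid circularity --- are points the paper leaves implicit, and they are handled correctly.
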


        \begin{Remark} 
        Note that both stability frameworks above are independent of the exact choice of the solution $(\bar{\rho},\bar{c})$ to (\ref{res_bdrKS}), i.e. the reconstruction of the numerical solution and the numerical scheme itself. 
        For example in \cite{Kwon2023} a similar stability framework as Proposition \ref{adj_gengronwall} is used to derive a posteriori error estimates for a dG method.
        The only restrictions regarding the reconstruction are regularity constraints, the fact that a residual may only occur on the right-hand side of the first equation (\ref{KS1}), while no residual may occur in the elliptic equation (\ref{KS2}) and that we need to be able to compute all norms of the reconstruction, or upper bounds thereof, explicitly.
        % Additionally, similar stability results for the chemical concentration $c$ can be deduced via Theorem~\ref{stab_thm} and the elliptic coupling of $\rho$ and $c$ in (\ref{KS2}).
    \end{Remark}
\section{Numerical method}
\label{chapter-Num-method}

In order to tackle the Keller-Segel system numerically, we solve (\ref{KS1}) for the bacterial density, using a cell-centered FV scheme and (\ref{KS2}) for the chemical concentration using a $P1$-FE scheme. 
The numerical scheme is presented for $d = 3$, as this case is more delicate. Similar ideas are equally applicable for $d = 2$ and used to perform the corresponding numerical simulations, discussed in Section~\ref{num_sim}.

\noindent We choose this specific numerical scheme as it simplifies the presentation of the \emph{a posteriori} residual estimates, as fewer technical difficulties occur when dealing with the elliptic equation (\ref{KS2}), compared to using a FV-FV scheme:
Firstly, \emph{a posteriori} error estimators for diffusion-reaction problems, as (\ref{KS2}), are readily available for FE schemes, see \cite[Section 4.3]{Verfurth2013}. 
Secondly, using a FV scheme for (\ref{KS2}), would lead to a discontinuous velocity field in the convective term of (\ref{KS1}), which is not covered by \cite{Nicaise2006}, which is needed for reconstructing our numerical approximation, see Section~\ref{Morley}. 
Nevertheless, we expect that similar results to the ones presented in Section~\ref{aposterirorierror} can be shown for the FV scheme in \cite{Filbet2006} or the FE scheme in \cite{Saito2007}, but we anticipate them to require more technical work.

\noindent Let $\mathcal{T}_h$ be a conforming and uniformly shape regular tetrahedral mesh of $\mathbb{T}^d$,  with the maximal mesh size $h$.
% We discretize it, by choosing a conforming and uniformly shape regular tetrahedral mesh $\mathcal{T}_h$ with the maximal mesh size $h$, see \cite[Section 3.1.5]{Bartels2015}.
This means among others, that each element $K \in \mathcal{T}_h$ is a tetrahedron, i.e. it has four triangular faces, and satisfies $\sup_{h>0}\sup_{K \in \mathcal{T}_h} \frac{h_K}{r_K} \leq c_\text{usr}$ for some constant $c_\text{usr} > 0$.
Here $h_K$ denotes the diameter of the tetrahedron $K$ and $r_K$ its inradius, i.e. the radius of the largest ball that is completely contained in $K$.
Then the (maximal) mesh size is quantified by $h := \max_{K \in \mathcal{T}_h} \left\{ h_K \right\}$.
For cell-centered FV schemes, one needs to make sure, that for each element $K \in \mathcal{T}_h$ the cell-center, i.e. the circumcenter $x_K$, lies in the interior of $K$. 
The construction of \emph{well-centered} tetrahedral meshes is a delicate task in itself, yet it can be accomplished, see \cite{Hirani2008}.
 
\noindent We further define $F_h$, to be the set of all faces of the tetrahedral mesh $\mathcal{T}_h$.
% Let $F_h^{int} := \left\{ F \in F_h : F \nsubseteq \partial \Omega  \right\}$ denote the set of interior faces and $F_h^{bdr} := \left\{ F \in F_h : F \subset \partial \Omega  \right\}$ the set of boundary faces. 
% Consequently, $F_h = F_h^{int} \stackrel{.}{\cup} F_h^{bdr}$. 
For each $K \in \mathcal{T}_h$, we define $F_K := \left\{ F \in F_h : F \subset \partial K  \right\}$, the set of its (four) faces.

\noindent We denote the $d$-dimensional Lebesgue measure by $|\cdot|_d$. Then, for each tetrahedron $K \in \mathcal{T}_h$ and face $F \in F_K$, $|K|_3$ and $|F|_2$ denote their volume and area, respectively. We omit the subscript whenever it is clear from the context.
The unit outward normal to $K$ along $F$ is denoted by $n_{K,F}$. 
Furthermore, the unit outward normal vector of the boundary $\partial K$ is denoted by $n_K = n_K(x)$.
Lastly, for each face $F$ we fix two elements $K, L \in \mathcal{T}_h$ with $F = K \cap L$ and denote the unit normal vector associated with a face $F$ by $n_F$, pointing from $K$ to $L$. 
This notation is ambiguous as for each face $F=K\cap L$ we may choose $n_F = n_{K,F}$ or $n_F = n_{L,F}$ with $K,L \in \mathcal{T}_h$.
The exact choice does not matter, as long as it remains fixed throughout this whole work.
We will see that this ambiguity of the face normal, does not pose a problem for our purposes. 

\noindent In order to discretize in time, we choose the potentially non-uniform time steps $t^n \in [0,T]$ for $n = 0, 1, \dots, N_t$ with the time step sizes $\Delta t^n := t^{n+1}-t^n \geq 0$ such that $\sum_{n = 0}^{N_t-1} \Delta t^n = T$. 

\subsection{Finite Volume scheme for the bacterial density}
\label{FV-scheme-section}

Assume that some approximation $c_h^n$ of the chemical concentration $c$ at time $t^n$ is given, such that the function $\nabla c_h^n \cdot n_{K,F}$ is well-defined on all faces $F\in F_h$.
Then, we approximate the normal derivative of the chemical concentration by the mean over the face $F$ of its approximation $\nabla c_h^n \cdot n_{K,F}$, that is 
\begin{align*}
    \mathcal{M}_F(\nabla c_h^n \cdot n_{K,F}) := \frac{1}{|F|} \int_F \nabla c_h^n \cdot n_{K,F} \ dS(x) .
\end{align*}

% \begin{align*}
%     \int_K \partial_t \rho \ dx + \sum_{F \in F_K}\int_F  \left( \rho \nabla c - \nabla \rho \right)\cdot n_{K,F} \ dS(x)= 0
% \end{align*}
% is locally approximated in space by 
% \begin{align*}
%     |K| \partial_t \rho_K + \sum_{\substack{F \in F_K, \\ F = K \cap L}} |F| \left( \frac{\rho_K-\rho_L}{\log(\rho_K)-\log(\rho_L)} \mathcal{M}_F(\nabla c_h \cdot n_{K,F}) - \frac{\rho_L - \rho_K}{d_F} \right) = 0.
% \end{align*}

\noindent Then, on each $K \in \mathcal{T}_h$, the FV scheme reads as 
\begin{align}
    \label{FV_loc}
    \frac{\rho_K^{n+1}-\rho_K^{n}}{\Delta t^n} + \frac{1}{|K|} \sum_{F \in  F_K }\left( \mathcal{C}_F(\rho_h^{n}) \mathcal{M}_F(\nabla c^n_h \cdot n_{K,F}) - \mathcal{D}_{K,F}(\rho_h^{n+1}) \right) = 0,
\end{align}
% where we only sum over interior faces, to account for the homogenous Neumann boundary condition.
where for $F = K \cap L \in F_h$, with $K,L \in \mathcal{T}_h$, we define the convective flux
\begin{align*}
    \mathcal{C}_F(\rho_h^{n}) := \begin{cases}
        |F|\frac{\rho_K^{n}-\rho_L^{n}}{\log(\rho_K^{n})-\log(\rho_L^{n})}, \quad &\text{if } \rho_K^{n}, \rho_L^{n} > 0 \text{ and } \rho_K^{n} \neq \rho_L^{n},\\
        |F|\rho_K^{n} , \quad &\text{if } \rho_K^{n}, \rho_L^{n} > 0 \text{ and } \rho_K^{n} = \rho_L^{n}, \\
        0, \quad & \text{else},
    \end{cases}
\end{align*}
and the diffusive flux
\begin{align*}
    \mathcal{D}_{K,F}(\rho_h^{n+1}) = \frac{|F|}{d_F}\left( \rho_L^{n+1} - \rho_K^{n+1} \right),
    % := \begin{cases}
    %     \frac{|F|}{d_F}\left( \rho_L^{n+1} - \rho_K^{n+1} \right)\quad &\text{if } F = K \cap L \in F_h^\text{int} \\
    %     0 & \text{else},
    % \end{cases}
\end{align*}
with $d_F := \norm{x_L - x_K}_2$.
\noindent In words, we use the logarithmic mean whenever it is well-defined, else we continuously extend the flux.
This choice is motivated by the gradient flow structure of the Keller-Segel system and the corresponding entropy balance, see \cite[Remark 3.1]{Kolbe2023}.
The log-mean flux was originally used for appropriately discretizing non-linear (cross-) diffusion, see \cite{Bessemoulin2012,Jungel2023}.

\begin{Remark}
    We could replace the logarithmic mean in the definition of the convective flux by some other mean, e.g.
the arithmetic mean. Since we discretize the convective flux explicitly in our time discretization the choice of the
numerical convective flux only has a very small impact on the computational costs.
\end{Remark}

\noindent With
\begin{align*}
    \mathcal{D}_h(\rho^{n+1}_h) |_K &:= \frac{1}{|K|} \sum_{F \in F_K} \mathcal{D}_{K,F}(\rho_h^{n+1}) \quad \text{and} \\
    \mathcal{C}_h(\rho^n_h,c_h^n) |_K &:= \frac{1}{|K|} \sum_{F \in F_K} \mathcal{C}_F(\rho_h^{n}) \mathcal{M}_F(\nabla c^n_h \cdot n_{K,F}),
\end{align*}
we obtain the global formulation of (\ref{FV_loc}), which reads as
\begin{align}
    \label{FV}\tag{FV}
    \frac{\rho_h^{n+1}-\rho_h^{n}}{\Delta t^n} + \mathcal{C}_h(\rho^n_h,c_h^n) -  \mathcal{D}_h(\rho^{n+1}_h) = 0.
\end{align}

\subsection{Finite Element scheme for the chemical concentration}
\label{FE-scheme-section}

In order to get an approximation $c_h^n$ to the chemical concentration at time $t^n$, we use a $P1$-FE scheme, numerically solving (\ref{KS2}).

\noindent In (\ref{FV}), we require well-defined quantities $\nabla c_h^n \cdot n_{K,F}$ on all faces $F \in F_h$ and all $n = 0,1,\ldots,N_t$. 
For a $P1$-FE solution $c_h^n$ on the same mesh $\mathcal{T}_h$, the directional derivatives $\nabla c_h^n \cdot n_{K,F}$, are piecewise constant functions, jumping across the faces $F \in F_h$.
Hence, the required traces are not well-defined.
There are multiple ways to overcome this. We do so, by constructing another mesh $\mathcal{S}_{\mathcal{T}_h}$ for the $P1$-FE scheme, such that the quantities $\nabla c_h^n \cdot n_{K,F}$ are well-defined for all $K \in \mathcal{T}_h$ and $F \in F_K$.
We call the mesh $\mathcal{S}_{\mathcal{T}_h}$ the dual mesh with respect to the (primal) mesh $\mathcal{T}_h$. 
The notation introduced above regarding sets of faces, unit normal vectors etc., applies analogously for the dual mesh $\mathcal{S}_{\mathcal{T}_h}$ by replacing $F$ by $S$.

\noindent In order to construct the dual mesh $\mathcal{S}_{\mathcal{T}_h}$, we observe that for $P1$-FE approximations $c_h^n$, tangential gradients $\nabla c_h^n \cdot \tau_F$, i.e. directional derivatives along a face $F$ do not jump and are thus well-defined. 
We therefore construct the mesh $\mathcal{S}_{\mathcal{T}_h}$ such that the primal normal vectors $n_{K,F}$ lie tangential to the dual faces $S \in S_h$ for all $K \in \mathcal{T}_h$ and $F \in F_K$. In other words, the directional derivatives $\nabla c_h^n \cdot n_{K,F}$ need to be tangential gradients with respect to $\mathcal{S}_{\mathcal{T}_h}$.
One way to achieve this, is constructing each element of the dual mesh in the following way: Let a primal face $F = K \cap L \in F_h$ be given. A dual mesh element is then uniquely defined via its four vertices, which are the circumcenters $x_K$ and $x_L$, as well as two adjacent nodes of $F$.
On boundary faces, we replace one circumcenter of a primal element by the circumcenter of the boundary face. This results in exactly three dual elements per primal face.
The construction of one dual element is illustrated in \autoref{meshes:dual3D}. For $d = 2$ a similar construction, connecting the two circumcenters of neighboring primal elements to one vertex of the edge that forms their intersection, leads to the dual mesh in \autoref{meshes:dual2D}.

\begin{figure}[ht]
    \centering
    \includegraphics[width=0.6\linewidth]{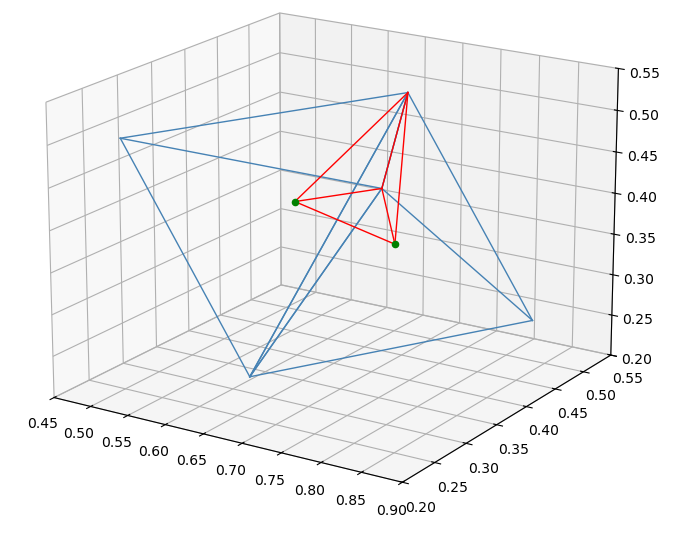} 
    \caption{Illustration of the construction of a single dual element (red) with respect to two neighboring primal elements (blue) and their circumcenters (green) for $d=3$.}
    \label{meshes:dual3D} 
\end{figure}

\begin{figure}[ht]
    \centering
    \includegraphics[width=0.7\linewidth]{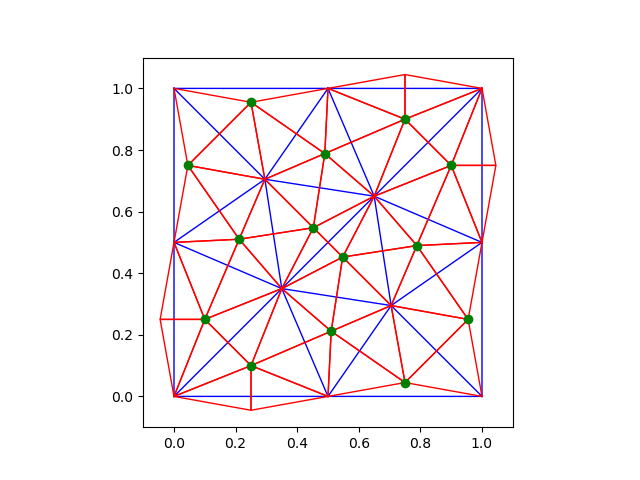} 
    \caption{Illustration of the primal mesh (blue), its circumcenters (green) and the corresponding dual mesh (red) for $d = 2$.}
    \label{meshes:dual2D} 
\end{figure}

\begin{Remark}
    Another idea to get well-defined traces of normal derivatives is to increase the regularity of our finite element. 
    \begin{enumerate}
        \item[i)] Indeed, $C^1$-finite elements can be considered, see \cite{Guzman2022}, and should fit our a posteriori analysis without big adjustments. 
        However, $C^1$-finite elements require a large number of DoFs, especially in three space dimensions and are absent from all major FE packages. 
        One way to circumvent this issue is suggested in \cite{Ainsworth2024}, but not further pursued here.

        \item[ii)] Alternatively, one can also think of choosing $H(div)$-conforming mixed finite elements, e.g. the Raviart-Thomas elements \cite{Raviart1975}, solving a first-order representation of the elliptic PDE (\ref{KS2}).
        This approach would require some adaptation of the arguments in our a posteriori analysis and is also not further considered in this work.

        \item[iii)] A third possibility is to use finite elements of Morley-type, as considered for reconstructions in Section~\ref{Morley}. However, normal derivatives can not straightforwardly be transformed from a reference element to a physical element, as needed for an efficient implementation of a finite element method. 
        This problem can be tackled using Piola transforms, see \cite{Kirby2018}. Nevertheless, this approach is also not followed here. 
    \end{enumerate}
\end{Remark}

\noindent In order to state our FE method, we introduce the discrete spaces
\begin{align*}
    P_0(\mathcal{T}_h) &:= \left\{ v \in L^2(\mathbb{T}^d,\R) : v |_{K} = \text{constant},\ \forall K \in \mathcal{T}_h \right\}, \\
    P_1(\mathcal{S}_{\mathcal{T}_h}) &:= \left\{ v \in H^1(\mathbb{T}^d,\R) \cap C^0(\mathbb{T}^d,\R) : v |_{K} = \text{affine linear},\  \forall K \in \mathcal{S}_{\mathcal{T}_h} \right\}.
\end{align*} 
We denote by $\Pi_i : L^2(\mathbb{T}^d) \rightarrow P_i(\mathcal{T}_h)$ the $L^2$-projection onto the discrete space $P_i(\mathcal{T}_h)$, for $i=0,1$.
We can apply the $P1$-FE method to (\ref{KS2}), to compute an approximation $c_h^n \in P_1(\mathcal{T}_h)$ via
\begin{align}
    \label{FE}\tag{FE}
    \int_{\mathbb{T}^d} c_h^n v_h + \nabla c_h^n \cdot \nabla v_h \ dx = \int_{\mathbb{T}^d} \rho_h^n v_h \ dx, \quad \forall v_h \in P_1(\mathcal{T}_h).
\end{align}
As the right-hand side, we use the integrable, piecewise constant FV approximation $\rho_h^n \in P_0(\mathcal{T}_h)$. 

\subsection{Finite Volume -- Finite Element algorithm}

Combining the schemes (\ref{FV}) and (\ref{FE}), we obtain an algorithm that we expect to approximate the (weak) solution of the Keller-Segel system (\ref{KS1}) and (\ref{KS2}), equipped with (\ref{KScond}).
\begin{Algorithm}[FV-FE]
    \label{FV-FE-algo}
    Let a primal mesh $\mathcal{T}_h$, its dual mesh $\mathcal{S}_{\mathcal{T}_h}$ and a time discretization $t^n \in [0,T]$ for $n = 0, ..., N_t$ be given, as above. \\

    \noindent We start by defining $\rho_h^0 := \Pi_0 \rho_0 \in P_0(\mathcal{T}_h)$. Then, using $\rho_h^{0}\in P_0(\mathcal{T}_h)$, we apply (\ref{FE}) to solve for $c_h^0 \in P_1(\mathcal{T}_h)$.
    \\ \ \\
    \noindent For $n = 0, 1, \dots, N_t -1 $ : 
    \begin{enumerate}
        \item Using $c_h^{n} \in P_1(\mathcal{T}_h)$, we apply (\ref{FV}) to solve for $\rho_h^{n+1} \in P_0(\mathcal{T}_h)$. 
        \item Using $\rho_h^{n+1} \in P_0(\mathcal{T}_h)$, we apply (\ref{FE}) to solve for $c_h^{n+1} \in P_1(\mathcal{T}_h)$.
    \end{enumerate}
\end{Algorithm}

\section{Reconstruction of numerical approximations}
\label{Morley}

In order to establish the desired \emph{a posteriori} error estimates, we introduce a reconstruction of our numerical approximations.
For the FV solution, we use an interpolant of \emph{Morley-type}, which is explained in the following.
Alternatively, one could also use potential and flux reconstructions \cite{Chalhoub2011,Ern2010}, which will however not be discussed in the sequel.

\noindent Various interpolants of \emph{Morley-type} are introduced and used to derive \emph{a posteriori} error estimates for FV approximations of the steady diffusion-convection-reaction equation for $d \in \{2,3\}$ in \cite{Nicaise2006}.
In the following, we adapt the general case, stated in \cite[Section 4.4]{Nicaise2006}, to our purposes. %, of spatial interpolation in each time step $t^n$ for $n = 0,1,\dots,N_t$. \\

\noindent To this end, denote bubble functions, as defined in \cite{Nicaise2006}, by $b_K$ for the element bubble functions and $b_F$ for face bubble functions. 
All bubble functions are normalized, i.e. multiplied by a constant factor, such that they attain a maximal value of $1$. 
%This factor equals $256$ for all $b_K$ and $27$ for all $b_F$, in the case of $d = 3$.
For each $K\in\mathcal{T}_h$ and $F \in F_K$, we observe that bubble functions satisfy
\begin{align*}
    b_K = 0 \text{ on } \partial K \quad \text{and} \quad b_F = 0 \text{ on } \partial K \setminus F.
\end{align*}

\noindent The finite element of \emph{Morley-type} on tetrahedra, fit for our purposes, reads as 
\begin{align*}
    P_K &:= \left\{q_0 + \sum_{F \in F_K} \beta_{K,F} b_F b_K : q_0 \in \mathbb{P}^1(K), \ \beta_{K,F} \in \R\right\}, \\
    \Sigma_K &:= \left\{ p(a_i^K) \right\}_{i = 1,2,3,4} \cup \left\{ \int_F \nabla p \cdot n_{K,F} \ dS(x) \right\}_{F \in F_K},
\end{align*}
for each $K \in \mathcal{T}_h$, where the function $p$ lies in $P_K$ and where the $a_i^K$, denote the four vertices of the tetrahedron $K$. 
Here $P_K$ is the space of local ansatz functions, from which we construct the Morley interpolant, using the degrees of freedom that are collected in the set $\Sigma_K$, as done below, in (\ref{morley:nodes}) and (\ref{morley:diff}).
Along these lines, one can construct a similar finite element of \emph{Morley-type} for $d=2$.
%Note that for the linear interpolation $q_0$, many choices of nodal values are possible.
We immediately get
\begin{Lemma}
    The triple $\left(K,P_K,\Sigma_K\right)$ is a $C^0$-finite element.
\end{Lemma}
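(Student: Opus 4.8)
The goal is to verify that $(K, P_K, \Sigma_K)$ constitutes a finite element in the sense of Ciarter, which requires three things: that $P_K$ is a finite-dimensional space of functions, that $\Sigma_K$ is a basis for the dual space $P_K'$ (equivalently, that the degrees of freedom are unisolvent on $P_K$), and that the resulting element has $C^0$-continuity when elements are glued together across faces.

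The plan is as follows. First I would compute the dimension of $P_K$. A function in $P_K$ is of the form $q_0 + \sum_{F \in F_K} \beta_{K,F}\, b_F b_K$, where $q_0 \in \mathbb{P}^1(K)$ contributes $4$ degrees of freedom (on a tetrahedron), and there are $4$ faces in $F_K$, each contributing one coefficient $\beta_{K,F}$. I would argue the $b_F b_K$ are linearly independent and independent of $\mathbb{P}^1(K)$ (since each $b_F b_K$ vanishes on $\partial K$ to higher order than any affine function, and the four face bubbles are associated to distinct faces), so $\dim P_K = 8$. This matches the cardinality of $\Sigma_K$: four vertex evaluations $p \mapsto p(a_i^K)$ and four face-flux functionals $p \mapsto \int_F \nabla p \cdot n_{K,F}\,dS$.

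\medskip

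The key step is unisolvence: I would show that if $p \in P_K$ satisfies $p(a_i^K) = 0$ for $i = 1,2,3,4$ and $\int_F \nabla p \cdot n_{K,F}\,dS = 0$ for all $F \in F_K$, then $p = 0$. Writing $p = q_0 + \sum_F \beta_{K,F}\, b_F b_K$, I first evaluate at the vertices: since $b_K$ vanishes on $\partial K$ and hence at every vertex, and $b_F$ likewise vanishes at the vertices, the bubble part contributes nothing, so $q_0(a_i^K) = 0$ for all four vertices. An affine function vanishing at the four (affinely independent) vertices of a nondegenerate tetrahedron is identically zero, so $q_0 = 0$. It then remains to show that the four flux conditions force all $\beta_{K,F} = 0$. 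For this I would compute $\int_{F'} \nabla(b_F b_K)\cdot n_{K,F'}\,dS$ and argue the resulting $4\times 4$ matrix is invertible. The natural mechanism, exactly as in \cite[Section 4.4]{Nicaise2006}, is that $b_F b_K$ is a nonnegative function supported on $K$ with a definite sign of its normal flux across $F$, so the diagonal entries dominate; I expect the cross terms $\int_{F'} \nabla(b_F b_K)\cdot n_{K,F'}$ for $F' \neq F$ to vanish or to be controlled, making the matrix diagonal or diagonally dominant and thus nonsingular.

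\medskip

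The main obstacle is precisely this flux-matrix computation: establishing that the map $(\beta_{K,F})_F \mapsto \big(\int_{F'} \nabla(\sum_F \beta_{K,F} b_F b_K)\cdot n_{K,F'}\,dS\big)_{F'}$ is injective. I would prefer to invoke the corresponding unisolvence already established in \cite{Nicaise2006} rather than recompute integrals of gradients of degree-$(d+1)$ bubble products; the only thing to check is that our normalization (scaling the bubbles to have maximum value $1$) does not affect invertibility, which it cannot since it merely rescales columns. Finally, for the $C^0$-claim I would verify that the vertex values determine the trace of $p$ on each face: on a face $F$, the element bubble $b_K$ vanishes identically, and $b_{F'}$ for $F' \neq F$ vanishes on $F$, so the trace of $p|_F$ equals the trace of $q_0|_F$ plus $\beta_{K,F} b_F|_F b_K|_F = \beta_{K,F} b_F \cdot 0$, i.e. just $q_0|_F$, an affine function determined by the three shared vertex values. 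Since adjacent elements share these vertex degrees of freedom, the global interpolant is continuous, giving the $C^0$-conformity and completing the proof.
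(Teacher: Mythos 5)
Your proposal is correct and takes essentially the route the paper intends: the paper omits its own proof entirely, deferring to \cite[Section 4]{Nicaise2006}, and your argument (dimension count $4+4=8$, unisolvence via vertex values killing the affine part followed by invertibility of the flux matrix, and the trace argument for $C^0$-conformity) is precisely that standard argument, spelled out in more detail than the paper records. Your one hedge also resolves cleanly in your favor: writing $\nabla(b_F b_K) = b_F \nabla b_K + b_K \nabla b_F$ and using that $b_K \equiv 0$ on all of $\partial K$ while $b_F \equiv 0$ on every face $F' \neq F$, the off-diagonal entries $\int_{F'} \nabla(b_F b_K)\cdot n_{K,F'}\, dS$ vanish exactly and the diagonal entries reduce to $\int_F b_F^2\, (\nabla b_K \cdot n_{K,F})/b_F\, dS$-type integrals of one definite sign, so the flux matrix is diagonal and nonsingular.
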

\begin{proof}
    The proof follows similar arguments as in \cite[Section 4]{Nicaise2006}. We omit it for brevity.
\end{proof}
% Note that for this proof the exact construction of the tetrahedral mesh $\mathcal{T}_h$ is irrelevant.

% \subsection{The Morley interpolant}
% \label{morley-int}
\noindent We are now prepared to introduce the Morley interpolant, similar to the one defined in \cite{Nicaise2006}. 
For any $K \in \mathcal{T}_h$, we choose the $C^0$-finite element $\left(K,P_K,\Sigma_K\right)$ from above and set
\begin{align*}
    U_h := \left\{p \in H^1(\Omega) \cap C^0(\bar{\Omega}) : p|_K \in P_K, K \in \mathcal{T}_h \right\}.
\end{align*}
Then, for a piecewise constant function $\rho_h^n \in P_0(\mathcal{T}_h)$, we define its Morley interpolant as the unique element $\tilde{\rho}^n \in U_h$, such that
\begin{align}
    \tilde{\rho}^n(a_i^K) &= y_i, \quad \forall K \in \mathcal{T}_h \text{ and } i = 1,2,3,4\ , \label{morley:nodes} \\
    \int_F \nabla \tilde{\rho}^n \cdot n_{K,F} \ dS(x)  &= \mathcal{D}_{K,F}(\rho_h^n), \quad \forall F \in F_K, K \in \mathcal{T}_h. \label{morley:diff} %\\
    %\int_E \left(\nabla c_h^{n-1} \cdot n_{E}\right) \tilde{\rho}^n \ dS(x) &= \mathcal{M}_E\left(\nabla c_h^{n-1} \cdot n_{E}\right) \mathcal{F}_E(\rho_h^n), \quad \forall E \in E_h^{int}.  \label{morley:convection1} 
\end{align}
The nodal values $y_i$ are chosen via a least-squares approach as done in \cite[Section 3.3]{Coudiere1999}.
The preservation of numerical diffusive fluxes (\ref{morley:diff}) will be exploited in residual estimates during the derivation of the \emph{a posteriori} error estimates in Section~\ref{aposterirorierror}. 

\begin{Remark}
    In Section~\ref{num_sim}, we choose the nodal values, i.e. (\ref{morley:nodes}), according to \cite[Section 3.3]{Coudiere1999}, which leads to optimal polynomial approximation properties of $\tilde{\rho}$, i.e. quadratic in $L^2(\mathbb{T}^d)$ and linear in $H^1(\mathbb{T}^d)$.
    This is not the case if one chooses the arithmetic mean over all adjacent elements with respect to $a_i^K$, for $d=2$.
\end{Remark} 

\noindent Note that this finite element of \emph{Morley type} is not needed in the numerical scheme itself. 
It is only needed to compute our \emph{a posteriori} error estimator, see Section~\ref{aposterirorierror}.

\noindent In order to reconstruct in time, we assume $t \in [t^n,t^{n+1}]$, for any $n = 0,1,\dots, N_t-1$.
    %The case of $t \in [0,t^{1}]$ will be considered later.
    Let $\tilde{\rho}^n$ denote the Morley interpolant of the FV solution $\rho^n_h \in P_0(\mathcal{T}_h)$.
    % For $n=0$, we recall $\rho_h^0|_K = \rho_0(x_K)$, for all $K\in\mathcal{T}_h$, where $x_K$ denotes the circumcenter of the tetrahedron $K$. 
    %Additionally, we set $\tilde{\rho}^0 := \rho_0$.
    We introduce the temporal interpolation
    \begin{align*}
        \tilde{\rho}(t,x) := \ell_0^n(t)\tilde{\rho}^{n+1}(x)+\ell_1^n(t)\tilde{\rho}^{n}(x), \quad \forall t \in \left[ t^n , t^{n+1} \right],
    \end{align*}
    where we use the Lagrange polynomials
    \begin{align*}
       \ell_0^n(t) := \frac{t-t^n}{\Delta t^n} \quad \text{and} \quad  \ell_1^n(t) := \frac{t^{n+1}-t}{\Delta t^n}.
    \end{align*}
    It holds $0 \leq \ell_0^n(t) \leq 1$ and $0 \leq \ell_1^n(t) \leq 1$ as well as
    \begin{align*}
        %\label{temporal_derivative}
        \partial_t \tilde{\rho}(t,\cdot) = \frac{\tilde{\rho}^{n+1}-\tilde{\rho}^{n}}{\Delta t^n},
    \end{align*}
    for all $t \in [t^n,t^{n+1}]$.
    We reconstruct the chemical concentration at every point in time $t \in [0,T]$ as the weak solution to the elliptic equation 
    \begin{align*}
            \tilde{c}(t,\cdot)-\Delta \tilde{c}(t,\cdot) &= \tilde{\rho}(t,\cdot), \quad \text{in } \mathbb{T}^d, \\
            % \nabla \tilde{c}(t,\cdot) \cdot n &= 0, \quad\quad\ \   \text{on } \partial \Omega.
    \end{align*}

    % In other words
    % \begin{align*}
    %     \int_\Omega \tilde{c} \phi \ dx + \int_\Omega \nabla \tilde{c} \cdot \nabla \phi \ dx = \int_\Omega \tilde{\rho} \phi \ dx, \quad \forall \phi \in H^1(\Omega).
    % \end{align*}
    % \begin{align*}
    %     \int_\Omega \tilde{c} \phi \ dx + \int_\Omega \nabla \tilde{c} \cdot \nabla \phi \ dx = \int_\Omega g_h \phi \ dx, \quad \forall \phi \in H^1(\Omega).
    % \end{align*}
    % Here the right-hand side is given by
    % \begin{align*}
    %     g_h := A_h c_h - \Pi_1 \tilde{\rho} + \tilde{\rho},
    % \end{align*}
    % where $\Pi_1$ is the $L^2$-projection onto the finite element space $P_1(\mathcal{T}_h)$, the time-interpolated finite element solution is given by 
    % \begin{align*}
    %     c_h(t,x) := \ell_0^n(t)c_h^{n+1}(x)+\ell_1^n(t)c_h^{n}(x), \quad \forall t \in \left[ t^n , t^{n+1} \right],
    % \end{align*}
    % and the discrete elliptic operator $A_h: P_1(\mathcal{T}_h) \rightarrow P_1(\mathcal{T}_h)$ is defined by
    % \begin{align*}
    %     \int_\Omega A_h v \phi = \int_\Omega v \phi \ dx + \int_\Omega \nabla v \cdot \nabla \phi \ dx, \quad \forall \phi \in P_1(\mathcal{T}_h). 
    % \end{align*}
    % This reconstruction $\tilde{c}$ is of purely theoretical use and is not needed explicitly for the computations in numerical experiments. 
    % However, in order to bound it using fully computable terms, we exploit a posteriori error estimates for second order elliptic problems from the literature.
    % This will be made more precise in what follows.
    \noindent Due to the linearity of the elliptic equation and due to the uniqueness of weak solutions thereof, we observe that
    \begin{align*}
        \tilde{c}(t,x) = \ell_0^n(t)\tilde{c}(t^{n+1},x) + \ell_1^n(t)\tilde{c}(t^{n},x).
    \end{align*}
    We need this specific reconstruction for the chemical concentration, such that our perturbed equation has the form of (\ref{res_bdrKS}), i.e. there is no residual in the second equation.
    Other reconstructions, inducing residuals in the second equation, would require a different stability framework.
    
    % \noindent Consequently, for convex domains  $\Omega$, elliptic regularity of $\tilde{c}$ holds, see \cite{Grisvard2011}. 
    % We thus assume throughout the rest of this work that the domain $\Omega$ is convex. 
    
    \noindent In accordance with the stability frameworks above, we interpret $(\tilde{\rho},\tilde{c})$ as a solution to (\ref{res_bdrKS}). 

    \begin{Remark}
        Note, that the Morley interpolant $\tilde{\rho}$ is computable, and is required for the computation of the a posteriori residual estimates, see Section~\ref{aposterirorierror}.
        On the other hand, the elliptic reconstruction $\tilde{c}$ is not computable and a purely theoretical object, which will be estimated by computable terms in the following.
    \end{Remark}
    \begin{Remark}
        \label{grad_c_up}
        In the same spirit,  for all $t \in [0,T]$, the term $\norm{\nabla \tilde{c}(t,\cdot)}_{L^\infty}$ is not computable. For brevity, we omit the $t$-dependence in the sequel.
        We provide an upper bound for $\norm{\nabla \tilde{c}}_{L^\infty}$ as follows.
        \begin{enumerate}
            \item[i)] Via the Sobolev embedding $W^{2,d + \mu}(\mathbb{T}^n) \hookrightarrow W^{1,\infty}(\mathbb{T}^n)$ for all $\mu > 0$ and elliptic regularity in $L^p$-spaces, see \cite[Theorem 9.11]{Gilbarg1977},
            we have 
            \begin{align*}
                \norm{\nabla \tilde{c}}_{L^\infty} \leq C_{d+\mu} \norm{ \nabla^2 \tilde{c}}_{L^{d+\mu}}\leq C_{d+\mu}
                \tilde{C}_{\text{ell}} \norm{\tilde{\rho}}_{L^{d+\mu}},
            \end{align*}
            for any  $\mu > 0$, where we used that gradient fields on the flat torus are mean free.
            The constants $C_{d+\mu}, \tilde{C}_{\text{ell}} > 0$ can be estimated explicitly, see \cite[Appendix A]{BrunkPrep} and \cite[Theorem 3.4]{Mizuguchi2017}, when working on the torus $\mathbb{T}^d$.
            \item[ii)] Alternatively, one can observe that $\nabla \tilde{c}$ weakly solves the vector-valued, elliptic problem $(I - \Delta)\nabla \tilde{c} = \nabla \tilde{\rho}$ in $\mathbb{T}^d$.
            We can approximate $\nabla \tilde{c}$ by some FE solution $q_h$. 
            We bound 
            \begin{align*}
                \norm{\nabla \tilde{c}}_{L^\infty} \leq \norm{q_h}_{L^\infty} + \norm{\nabla \tilde{c} - q_h}_{L^\infty}
                \leq \norm{q_h}_{L^\infty} + \eta_\infty,
            \end{align*}
            where we use the maximum-norm a posteriori error estimator $\eta_\infty$ as derived in Appendix~\ref{A:Linf}. % hier demlow,kopteva,franz zitieren!
            % \item[iii)] For rougher domains, e.g. bounded polygonal domains with non-periodic boundary conditions, \cite[Theorem 9.11]{Gilbarg1977} still applies, but the constant $\tilde{C}_{\text{ell}} > 0$ is to our knowledge in general not available explicitly.
        \end{enumerate}
        For sufficiently fine meshes, we expect the second approach to provide sharper upper bounds than the first approach.
        % \begin{align*}
        %     \eta_\infty = \max_{K \in \mathcal{T}_h} \{ \alpha_K \norm{\Delta q_h - q_h +  \nabla \bar{\rho}}_{L^\infty(K)} + \beta_K \norm{ \llbracket \nabla q_h \rrbracket }_{L^\infty(\partial K \setminus \partial \Omega)} \}.
        % \end{align*}
        % with $\alpha_K := \min\{ 1, \gamma_h h_K^2 \}$, $\beta_K := \min\{ 1, \gamma_h h_K \}$ and $\gamma_h := \log\big(2+(\min_{K \in \mathcal{T}_h} \{ h_K \})^{-1}\big)$ as in \cite[Section 3]{Demlow2016}.
        % The constant $C_\infty > 0$ can be estimated explicitly by investigating the proof of \cite[Lemma 6]{Demlow2016}, using \cite[Section 4]{Brenner2008}, as well as \cite{Dong2009, Verfurth1999}.
        % This is a computable upper bound, whenever $\bar{\rho}$ is computable.
    \end{Remark}

\section{A posteriori analysis}
    \label{aposterirorierror}
In this section, we aim to derive a fully computable \emph{a posteriori} error estimator, fit to infer \emph{a posteriori} existence via Proposition \ref{apostverif}.
For full computability, we need to deal with errors due to floating-point arithmetic, errors from iteratively solving the resulting linear system of equations, also called the algebraic error, and the discretization error, which is encoded in the residual $R_{\bar{\rho}}$ from (\ref{res_bdrKS}).

\subsection{Algebraic error}
\label{alg-error}
    First, we investigate how round-off errors can be estimated. Let $*$ denote an abstract operation, that requires $N$ elementary operations. Consider
    \begin{align*}
        \widetilde{a*b} = (a*b)\prod_{i=1}^N(1+\delta_i), \quad |\delta_i|\leq \varepsilon,
    \end{align*}
    where $\widetilde{a*b}$ denotes perturbed versions of $a*b$ due to the round-off error for each elementary operation, and $\varepsilon>0$ is the machine epsilon, e.g. $\varepsilon \approx 2.2204$e-$16$ for double precision.
    Then, we observe,
    \begin{align}
        \label{alg:roff}
        \frac{|\widetilde{a*b} - a*b |}{|a*b|} \leq ((1+\varepsilon)^N - 1) \leq \frac{N \varepsilon}{1 - N \varepsilon}, \quad \text{for } N\varepsilon<1,
    \end{align}
    which we will use in the following. \\
    \noindent Now, we are prepared to follow \cite[Section 4]{Vohralik2010FV}. 
    Computing the finite volume solution, via (\ref{FV}) is, for each $n = 1, 2, \ldots, N_t$, equivalent to solving a linear system of equations in each time step, i.e.
    \begin{align}
        \label{FV_LSE}
    \mathbb{S}_1 P^{n} = H_1 %P^{n} - \Delta t^n \mathcal{C}_h(\rho^n_h,c_h^n).
    \end{align}
    with $H_1 \in \mathbb{R}^{N_{\mathcal{T}_h}}$, $\mathbb{S}_1 \in \mathbb{R}^{N_{\mathcal{T}_h} \times N_{\mathcal{T}_h}}$ and $P^{n} \in \mathbb{R}^{N_{\mathcal{T}_h}}$, where $\rho_K^{n} = P_{\pi(K)}^n$ for all $K \in \mathcal{T}_h$ with a bijection $\pi : \mathcal{T}_h \rightarrow \{1, \ldots, N_{\mathcal{T}_h}\}$, where $N_{\mathcal{T}_h}$ is the number of elements in $\mathcal{T}_h$.
    In practice, the floating-point arithmetic leads to round-off errors in the system matrix and the right-hand side, while iteratively solving the LSE yields an additional algebraic error. We account for these inevitable inaccuracies in the following:
    Let $P_a^{n}$ be an approximate solution to (\ref{FV_LSE}), due to round-off errors and algebraic errors, solving
    $\tilde{\mathbb{S}}_1 P_a^{n} =: \tilde{H}_1^a$
    where $\tilde{\mathbb{S}}_1 \in \mathbb{R}^{N_{\mathcal{T}_h} \times N_{\mathcal{T}_h}}$ is the perturbed system matrix due to round-off errors corresponding to $\mathbb{S}_1$. 
    Similarly, let $\tilde{H}_1 \in \mathbb{R}^{N_{\mathcal{T}_h}}$ be the perturbed right-hand side, corresponding to $H_1$. 
    The vector $\tilde{H}_1^a$ differs from $\tilde{H}_1$ as it additionally covers the algebraic error from iteratively solving the linear system of equations $\tilde{\mathbb{S}}_1 P = \tilde{H}_1$.
    We then have 
    \begin{align}
        \label{FV_LSE_alg}
        \mathbb{S}_1 P_a^{n} = H_1 + R_{\text{FV}}^n, %\underbrace{P_a^{n} - \Delta t^n \mathcal{C}_h(\rho^n_{h,a},c_{h,a}^n)}_{ =: H} 
    \end{align}
    where the algebraic residual is defined as $R_{\text{FV}}^n := ( \tilde{H}_1^a - H_1) - (\tilde{\mathbb{S}}_1 - \mathbb{S}_1) P_a^{n}$. The residual function $r_{\text{FV}}^n \in P_0(\mathcal{T}_h)$ associated with the residual vector $R_{\text{FV}}^n$ is
    \begin{align*}
        r_{\text{FV}}^n |_K := \frac{1}{\Delta t^n}(R_{\text{FV}}^n)_{\pi(K)}.
    \end{align*}
    Then, setting $\rho_{h,a}^n|_K := P_{\pi(K),a}^{n}$ for all $K \in \mathcal{T}_h$, identity (\ref{FV_LSE_alg}) is equivalent to
    \begin{align*}
        \frac{\rho_{h,a}^{n}-\rho_{h,a}^{n}}{\Delta t^n} + \mathcal{C}_h(\rho^n_{h,a},c_{h,a}^n) -  \mathcal{D}_h(\rho^{n}_{h,a}) = -r_{\text{FV}}^n.
    \end{align*}
    We are later interested in the $L^2$-norm of $r_{\text{FV}}^n$. 
    We denote the Euclidean norm for vectors by $\| v\|_2^2 := \sum_{i=1}^N v_i^2$ and the associated matrix norm is the spectral norm $\| A \|_2^2 := \lambda_\text{max}(A^\top A)$, where $\lambda_\text{max}(A^\top A)$ denotes the largest eigenvalue of $A^\top A$.
    Let $\text{diag}(|\mathcal{T}_h|)^{1/2}$ be the diagonal matrix with $(\text{diag}(|\mathcal{T}_h|)^{1/2})_{i,i} := |\pi^{-1}(i)|^{1/2}$ for $i \in \{1, \ldots, N_{\mathcal{T}_h}\}$. 
    We observe
    \begin{align}
        \label{FV-alg}
        &\|r_{\text{FV}}^n\|_{L^2(\Omega)} =  \frac{1}{\Delta t^n} \| \text{diag}(|\mathcal{T}_h|)^{1/2} R_{\text{FV}}^n \|_2 \notag \\ 
        &\quad\leq \frac{1}{\Delta t^n}\| \text{diag}(|\mathcal{T}_h|)^{1/2} \|_2 \  \| R_{\text{FV}}^n \|_2 \notag \\
        &\quad\leq \frac{1}{\Delta t^n} \max_{K \in \mathcal{T}_h} |K|^{1/2} \big(  \| \tilde{H}_1^a - H_1 \|_2 + \| \tilde{\mathbb{S}}_1 - \mathbb{S}_1 \|_2 \| P_a^{n} \|_2 \big). \notag  \\
        &\quad\leq \frac{1}{\Delta t^n} \max_{K \in \mathcal{T}_h} |K|^{1/2} \bigg(\frac{C_1(h) \varepsilon}{1-2C_1(h) \varepsilon}\|\tilde{H}_1\|_2 +  \| \tilde{H}_1^a - \tilde{H}_1 \|_2  + \frac{C_2(h) \varepsilon}{1-2C_2(h) \varepsilon} \|\tilde{\mathbb{S}}_1\|_2 \| P_a^{n} \|_2 \bigg) =: \theta_\text{FV}^n,
    \end{align}
    similar to (\ref{alg:roff}),
    where $C_1(h), C_2(h) > 0$ are the numbers of elementary operations needed to compute $\tilde{H}_1$ and $\tilde{\mathbb{S}}_1$, respectively, which can be obtained from the implementation and are stated in Section~\ref{num_sim}.
    Additionally, note that we also need to account for round-off errors in the identity on the preservation of diffusive fluxes (\ref{morley:diff}).
    This is equivalent to solving an affine equation for each face $F \in F_h$, i.e. find $x \in \R$ such that $a^\top x + b = 0$. The perturbed affine equation reads as
    \begin{align*}
        a^\top(1+e_a) x + b(1+e_b) = 0,
    \end{align*}
    where $|e_a| \leq C_a \varepsilon$ and $|e_b| \leq C_b \varepsilon$ for some $C_a, C_b > 0$ which correspond to the number of elementary operations needed to assemble $a \in \R^N$ and $b \in \R$, respectively.
    Let $\tilde{\rho}_a^n$ be the Morley reconstruction of $\rho_{h,a}^n$. For the preservation of diffusive fluxes this translates to
    \begin{align}
        \int_F \nabla \tilde{\rho}_a^n \cdot n_{K,F} \ dS(x) + \mathcal{D}_{K,F}(\rho_{h,a}^n)e_a^F + |F| (\nabla q_0 \cdot n_{K,F}) e_b^F &= \mathcal{D}_{K,F}(\rho_{h,a}^n), \label{perturbed:diff} 
    \end{align}
    for all $F \in F_K, K \in \mathcal{T}_h$,
    where $e_a^F,e_b^F  \in \R$ satisfy $|e_a^F| \leq C_a^F \varepsilon$ with $C_a^F>0$ denoting the number of elementary operations per face, needed to compute $\mathcal{D}_{K,F}(\rho_{h,a}^n)$ and $|e_b^F| \leq C_b^F \varepsilon$ with $C_b^F>0$ denoting the number of elementary operations per face, needed to compute $|F| (\nabla q_0 \cdot n_{K,F})$.
    We set 
    \begin{align}
        \label{def:c3}
        C_3(h) := C_3^a(h) + C_3^b(h),
    \end{align}
    where $C_3^a(h) := \max_{F \in F_h, F \subset \partial K} |\mathcal{D}_{K,F}(\rho_{h,a}^n)|C_a^F$ and $C_3^b(h) := \max_{F \in F_h, F \subset \partial K} | |F| (\nabla q_0 \cdot n_{K,F}) |C_b^F$, for which explicit values can be obtained from the implementation and are stated in Section~\ref{num_sim} \\ \ \\
    \noindent For the finite element scheme, following \cite[Section 5]{Vohralik2018FE}, we similarly see that
    %We take a slightly different approach for the finite element approximation of the second equation. 
    the identity (\ref{FE}) is  equivalent to 
    \begin{align}
        \label{FE_LSE}
        \mathbb{S}_2 C^n = H_2
    \end{align}
    with $H_2 \in \mathbb{R}^{N_x}$, $\mathbb{S}_2 \in \mathbb{R}^{N_x \times N_x}$ and $C^{n} \in \mathbb{R}^{N_x}$ where  $c_h^{n} =  \mathbf{\Phi} \cdot C^n $, where $\mathbf{\Phi} = (\Phi_1, \Phi_2, \ldots, \Phi_{N_x})^\top$ is a basis of $P_1(\mathcal{T}_h)$ and $N_x$ the number of nodes in $\mathcal{S}_{\mathcal{T}_h}$.
    Let $C_a^n$ be an approximate solution to (\ref{FE_LSE}), corresponding to the perturbed $\tilde{H}_2 \in \mathbb{R}^{N_x}$, $\tilde{\mathbb{S}}_2 \in \mathbb{R}^{N_x \times N_x}$ and an iterative solver $\tilde{H}_2^a := \tilde{\mathbb{S}}_2 C_a^n$. 
    Then,
    \begin{align}
        \label{FE_LSE_alg}
        \mathbb{S}_2 C^n_a = H_2 + R_\text{FE}^n,
    \end{align}
    where $R_{\text{FE}}^n := ( \tilde{H}_2^a - H_2) - (\tilde{\mathbb{S}}_2 - \mathbb{S}_2) C_a^{n}$.
    Similar to before, we set $c_{h,a}^{n} := \mathbf{\Phi} \cdot C^n_a$.
    We define the residual function $r_\text{FE}^n$ associated with the residual vector $R_\text{FE}^n$ by 
    \begin{align*}
        r_\text{FE}^n := \mathbf{\Phi}^\top \mathbb{M}^{-1} R_\text{FE}^n,
    \end{align*}
    where $\mathbb{M}$ is the global mass matrix $\mathbb{M}_{i,j} = \int_\Omega \Phi_i \Phi_j \ dx$.
    Note that the residual function $r_\text{FE}^n$ is a purely theoretical object which will not be computed explicitly.
    Then, (\ref{FE_LSE_alg}) is equivalent to
    \begin{align*}
        \int_\Omega c_{h,a}^n v_h + \nabla c_{h,a}^n \cdot \nabla v_h \ dx = \int_\Omega (\rho_{h,a}^n + r_\text{FE}^n) v_h \ dx, \quad \text{ for all } v_h \in P_1(\mathcal{T}_h).
    \end{align*}
    The resulting perturbed Galerkin orthogonality, used in the corresponding finite element \emph{a posteriori} error estimator in Lemma~\ref{conv-res}, reads as
    \begin{align}
        \label{pert_orth}
        \langle c - c_{h,a} , v_h \rangle_{H^1} = -  \langle r_\text{FE}^n , v_h \rangle_{L^2}, \quad \text{ for all } v_h \in P_1(\mathcal{T}_h),
    \end{align}
    where $\langle \cdot , \cdot \rangle_{H^1}$ and $\langle \cdot , \cdot \rangle_{L^2}$ denote the usual scalar products corresponding to Hilbert spaces $H^1$ and $L^2$.
    We are later interested in the $L^2$-norm of $r_{\text{FV}}^n$.
    As preparation, we represent the perturbation of the mass matrix as 
    \begin{align*}
        \tilde{\mathbb{M}} - \mathbb{M} =: E = \Theta \odot  \mathbb{M}, \quad \text{with } |\Theta_{ij}| \leq \frac{C_4 \varepsilon}{1-C_4 \varepsilon} \quad \forall i,j \in {1,2,\ldots,N_x},
    \end{align*}
    where $\odot $ denotes the entry-wise matrix-product,
    and the constant $C_4>0$ denotes the maximal number of elementary operations per entry needed to compute $\tilde{M}$.
    We can estimate
    \begin{align*}
        \|E\|_\infty \leq \max_{i,j = 1,\ldots,N_x} |\Theta_{ij}| \| \mathbb{M} \|_\infty \leq \frac{C_4 \varepsilon}{1-C_4\varepsilon} \| \mathbb{M} \|_\infty \leq \frac{C_4 \varepsilon}{1-C_4 \varepsilon} (\| \tilde{\mathbb{M}} \|_\infty + \|E\|_\infty),
    \end{align*}
    hence
    \begin{align*}
        \|E\|_\infty \leq  \frac{C_4 \varepsilon}{1-2C_4\varepsilon} \| \tilde{\mathbb{M}} \|_\infty, \quad \text{for} \quad \frac{C_4 \varepsilon}{1-C_4 \varepsilon} < 1,
    \end{align*}
    where $\|A\|_\infty := \max_{i \in \{1,\ldots,N\}} \sum_{j \in \{1,\ldots,N\}} |a_{ij}|$ denotes the usual matrix infinity norm.
    In order to relate this to the spectral norm, we observe that for any sparse matrix $A$, one has
    \begin{align*}
        \frac{1}{\sqrt{k}} \|A\|_2 \leq \|A\|_\infty \leq \sqrt{k} \|A\|_2,
    \end{align*}
    where $k$ is the maximal number of non-zero entries per row and column, i.e. $k=6$ for the triangular mesh we use, see Figure~\ref{meshes:dual2D}.
    Thus,
    \begin{align}
        \label{A:E}
        \|E\|_2 \leq \sqrt{k}\|E\|_\infty \leq \frac{\sqrt{k} C_4 \varepsilon}{1-2C_4\varepsilon} \| \tilde{\mathbb{M}} \|_\infty \leq   \frac{k C_4 \varepsilon}{1-2C_4\varepsilon} \| \tilde{\mathbb{M}} \|_2.
    \end{align}
    Further, we observe %$\tilde{\mathbb{M}} = \mathbb{M} + \tilde{\mathbb{M}} - \mathbb{M} = \mathbb{M}( I + \mathbb{M}^{-1}E)$,
     \begin{align*}
        \mathbb{M}^{-1} = (I + \mathbb{M}^{-1}E) \tilde{\mathbb{M}}^{-1} = \bigg( \sum_{j=0}^\infty (\tilde{\mathbb{M}}^{-1} E)^j \bigg)\tilde{\mathbb{M}}^{-1},
    \end{align*}
    where we use $I + \mathbb{M}^{-1}E = (I - \tilde{\mathbb{M}}^{-1}E)^{-1}$.
    Therefore, we can estimate
    \begin{align*}
        \|\mathbb{M}^{-1}\|_2 \leq \bigg\|\sum_{j=0}^\infty (\tilde{\mathbb{M}}^{-1} E)^j \bigg\|_2 \|\tilde{\mathbb{M}}^{-1}\|_2 \leq \frac{\|\tilde{\mathbb{M}}^{-1}\|_2}{1-\|\tilde{\mathbb{M}}^{-1}E\|_2} \leq \frac{(1 - 2 C_4\varepsilon)\| \tilde{\mathbb{M}}^{-1} \|_2}{1 - (2 + k\|\tilde{\mathbb{M}}\|_2\|\tilde{\mathbb{M}}^{-1}\|_2 )C_4\varepsilon},
    \end{align*}
    where we use (\ref{A:E}) in the last step.
    This allows us to obtain the $L^2$-norm upper bound
    \begin{align*}
        \| r_\text{FE}^n \|_{L^2(\Omega)}^2 = |(R_\text{FE}^n)^\top \mathbb{M}^{-1} R_\text{FE}^n| \leq \|\mathbb{M}^{-1}\|_2 \| R_\text{FE}^n \|_2^2 \leq \frac{(1 - 2 C_4\varepsilon)\| \tilde{\mathbb{M}}^{-1} \|_2}{1 - (2 + k\|\tilde{\mathbb{M}}\|_2\|\tilde{\mathbb{M}}^{-1}\|_2 )C_4\varepsilon}\| R_\text{FE}^n \|_2^2 .
    \end{align*}
    We get
    \begin{align}
        \label{FE_theta}
        \| r_\text{FE}^n \|_{L^2(\Omega)}^2 &\leq \frac{(1 - 2 C_4\varepsilon)\| \tilde{\mathbb{M}}^{-1} \|_2}{1 - (2 + k\|\tilde{\mathbb{M}}\|_2\|\tilde{\mathbb{M}}^{-1}\|_2 )C_4\varepsilon} \| ( \tilde{H}_2^a - H_2) - (\tilde{\mathbb{S}}_2 - \mathbb{S}_2) C_a^{n} \|_2^2 \notag\\
        &\leq  \frac{(1 - 2 C_4\varepsilon)\| \tilde{\mathbb{M}}^{-1} \|_2}{1 - (2 + k\|\tilde{\mathbb{M}}\|_2\|\tilde{\mathbb{M}}^{-1}\|_2 )C_4\varepsilon} \bigg( \frac{C_5(h) \varepsilon}{1-2C_5(h) \varepsilon} \| \tilde{H}_2\|_2 + \| \tilde{H}_2^a -\tilde{H}_2 \|_2\notag\\
        &\quad\quad\quad\quad\quad\quad\quad\quad\quad\quad\quad\quad\quad\quad\quad + \bigg(\frac{C_6(h)\varepsilon}{1-2C_6(h)\varepsilon}\bigg) \|\tilde{\mathbb{S}}_2\|_2 \|C_a^{n}\|_2 \bigg)^2 =: (\theta_\text{FE}^n)^2,
    \end{align}
    where $C_5(h), C_6(h) > 0$ are the numbers of elementary operations needed to compute $\tilde{H}_2$ and $\tilde{\mathbb{S}}_2$, respectively, which can be obtained from the implementation and are stated in Section~\ref{num_sim}.

\subsection{Residual estimates}
    \label{residual_esti}
    This subsection is devoted to deriving fully computable upper bounds for the $L^2(0,T; (H^1(\mathbb{T}^d))^\prime)$-norm of the residual $R_{\tilde{\rho}_a}$ that is obtained when the reconstruction $(\tilde{\rho}_a , \tilde{c}_a)$, defined in Section~\ref{Morley}, of the approximation $(\rho_{h,a} , c_{h,a})$ as in the previous subsection, is interpreted as a solution to (\ref{res_bdrKS}).
    We describe the estimates for $d=3$ and unstructured tetrahedral meshes. The case $d=2$ and unstructured triangular meshes is analogous. Our approach is motivated by the one in \cite{Kolbe2023}, where \emph{a posteriori} residual estimates are derived for structured meshes for $d=2$. 
    
    \noindent We need further tools and notations, that will help us to estimate the residual. We denote the jump of a function $g$ across a face $F$ by
    \begin{align*}
        \llbracket g \rrbracket_F(x) := g|_K(x) - g|_L(x), \quad &\text{if } F \in F_h \text{ and } F = K \cap L.
        % \begin{cases}
        %     g|_K(x) - g|_L(x), \quad &\text{if } F \in F_h^{int} \text{ and } F = K \cap L, \\
        %     g|_K(x), \quad &\text{if } F \in F_h^{bdr} \text{ and } F \in F_K.
        % \end{cases}
    \end{align*}
    Note that all faces are interior faces when discretizing the flat torus $\mathbb{T}^d$.
    We omit the subscript $F$, whenever the respective face is clear from the context.
    Note that the jump $\llbracket g \rrbracket_F(x)$ depends on the orientation of the normal $n_F$. However, a jump term of the form $\llbracket \nabla g \cdot n_{F} \rrbracket_F(x)$ does not.

    \noindent Further, we introduce the mean over an element $K \in \mathcal{T}_h$ of an integrable function $g$,
    \begin{align*}
        \mathcal{M}_K(g) := \frac{1}{|K|}\int_K g(x) \ dx.
    \end{align*}
    For $v \in H^1(K)$, we have the Poincar\'e-Wirtinger inequality \cite[Corollary 2.3]{Bartels2015}, i.e.
    \begin{align}
        \norm{v - \mathcal{M}_K (v)}_{L^2(K)} \leq c_{P} h_K \norm{\nabla v}_{L^2(K)},
        \label{BrH}
        \tag{PW}
    \end{align}
    for constant $c_{P}=\frac{1}{2 \pi}$, independent of $K$ and $v$, which can be computed on $\mathbb{T}^d$ using Fourier arguments. For $v \in H^1(K)$,  $K \in \mathcal{T}_h$ and $F \in F_K$ there is the local trace inequality \cite[Lemma 4.2]{Bartels2015}, i.e.
    \begin{align}
        \norm{v}_{L^2(F)} \leq c_{\text{Tr}} \left( h_F^{-1/2} \norm{v}_{L^2(K)} + h_F^{1/2}\norm{\nabla v}_{L^2(K)} \right),
        \label{Tr}
        \tag{Tr}
    \end{align}
    for some constant $c_{\text{Tr}}>0$, independent of $F, K$ and $v$, which can be determined explicitly by investigating the proof of the estimate.
    Here, $h_F$ denotes the diameter of the face $F$. 

    % Furthermore, we need a consequence of the Bramble-Hilbert Lemma, which reads as
    % \begin{Lemma}[Consequence of Bramble-Hilbert Lemma, see Theorem 3.2 in \cite{Bartels2015}]
    %     \label{Bramble-Hilbert}
    %     Assume that $(\hat{K},\hat{P},\hat{\Sigma})$ is a reference finite element with polynomial degree $m=0$ on a triangle $\hat{T} \subset \R^d$.
    %     Let $\left(K,P,\Sigma\right)$ be the finite element that is obtained by an affine transformation $\Phi_K : K \rightarrow \hat{K}$. \\
    %     Then, for every $v \in H^1(K)$, the mean $\mathcal{M}_K (v) \in \mathcal{P}^0(K)$ satisfies
    %     \begin{align*}
    %         \norm{v - \mathcal{M}_K (v)}_{L^2(K)} \leq c_{\mathcal{M}_K} h_K \norm{v}_{H^1(K)},
    %     \end{align*}
    %     for some constant $c_{\mathcal{M}_K}(d,m,\hat{T})>0$. 
    % \end{Lemma}
    % For our purposes it suffices to apply this result for for $d=2$, which yields estimates on elements $K \in \mathcal{T}_h$.
    % In fact, a much more general statement is true, which is stated in \cite{Bartels2015}. \\
    % Furthermore, we consider the local trace inequality, which will be important later. It reads as 
    % \begin{Lemma}[Local trace inequality, see Lemma 4.2 in \cite{Bartels2015}]
    %     \label{local_trace}
    %     Let $K \in \mathcal{T}_h$ and $E \in E_K$.
    %     There exists a constant $c_{\text{Tr}} > 0$, such that for every $v \in H^1(\Omega)$ we have
    %     \begin{align*}
    %         \norm{v}_{L^2(E)} \leq c_{\text{Tr}} \left( |E|^{-1/2} \norm{v}_{L^2(K)} + |E|^{1/2}\norm{\nabla v}_{L^2(K)} \right).
    %     \end{align*}
    % \end{Lemma}
    % Now, we are prepared to tackle the residual estimates.

    \ \\ 
    \noindent Let $(\tilde{\rho}_a,\tilde{c}_a)$ be the reconstruction from Section~\ref{Morley} of the numerical approximation $(\rho_{h,a},c_{h,a})$, as in the previous subsection. 
    The residual, that we want to bound in the following, is given by
    \begin{align*}
        R_{\tilde{\rho}_a} :=  \partial_t \tilde{\rho}_a + \divergence\left( \tilde{\rho}_a \nabla \tilde{c}_a \right) - \Delta \tilde{\rho}_a.
    \end{align*}
    We set $\tilde{c}_a^n := \tilde{c}_a(t^n,\cdot)$.
    By inserting the reconstruction introduced above, as well as adding and subtracting the scheme (\ref{FV}) twice, once for each time step $t^n$ and $t^{n+1}$, we can rewrite the residual as
    \begin{align*}
        R_{\tilde{\rho}_a} &= \partial_t \tilde{\rho}_a + \divergence\left(\left(\ell_0^n(t)\tilde{\rho}_a^{n+1}(x)+\ell_1^n(t)\tilde{\rho}_a^{n}(x)\right)\left(\ell_0^n(t){\nabla \tilde{c}_a}^{n+1}(x)+\ell_1^n(t){\nabla \tilde{c}_a}^{n}(x)\right) \right) - \Delta \tilde{\rho}_a \\
        & \quad \quad \quad \quad \quad  \quad \quad \quad \quad \quad  \quad \quad \quad \quad + \ell_0^n(t) \left(  \mathcal{D}_h(\rho^{n+1}_{h,a}) - \mathcal{C}_h(\rho^{n}_{h,a},c_{h,a}^n) - \frac{\rho_{h,a}^{n+1}-\rho_{h,a}^{n}}{\Delta t^n} - r_\text{FV}^{n+1}\right) \\
        & \quad \quad \quad \quad \quad  \quad \quad \quad \quad \quad  \quad \quad \quad \quad  + \ell_1^n(t) \left(  \mathcal{D}_h(\rho^{n}_{h,a}) - \mathcal{C}_h(\rho^{n-1}_{h,a},c_{h,a}^{n-1}) - \frac{\rho_{h,a}^{n}-\rho_{h,a}^{n-1}}{\Delta t^{n-1}} - r_\text{FV}^n \right),
    \end{align*}
    for $n=1,2,\ldots,N_t-1$. The case $n=0$ will be considered separately later.
    Comparing the temporal derivatives, the diffusive terms and the convective terms of the residual to the ones of the scheme, we arrive at
    \begin{align}
        \label{residual_sum}
        R_{\tilde{\rho}_a} = \tilde{R}^D + \tilde{R}^T + \tilde{R}^C + \tilde{R}^a,
    \end{align}
    where
    \begin{align*}
        \tilde{R}^D &:=  \ell_0^n(t) \mathcal{D}_h(\rho^{n+1}_{h,a}) + \ell_1^n(t)\mathcal{D}_h(\rho^{n}_{h,a}) - \Delta \tilde{\rho}_a\\
        \tilde{R}^T \  &:= \frac{\tilde{\rho}_a^{n+1}-\tilde{\rho}_a^{n}}{\Delta t^n} - \ell_0^n(t)\frac{\rho_{h,a}^{n+1}-\rho_{h,a}^{n}}{\Delta t^n} - \ell_1^n(t) \frac{\rho_{h,a}^{n}-\rho_{h,a}^{n-1}}{\Delta t^{n-1}} \\
        \tilde{R}^C &:= \divergence\left(\left(\ell_0^n(t)\tilde{\rho}_a^{n+1}(x)+\ell_1^n(t)\tilde{\rho}_a^{n}(x)\right)\left(\ell_0^n(t){\nabla \tilde{c}_a}^{n+1}(x)+\ell_1^n(t){\nabla \tilde{c}_a}^{n}(x)\right) \right) \\
        & \quad\quad\quad\quad\quad\quad\quad\quad\quad\quad\quad\quad\quad\quad\quad\quad\quad -\ell_0^n(t)\mathcal{C}_h(\rho^n_{h,a},c_{h,a}^n) - \ell_1^n(t)\mathcal{C}_h(\rho^{n-1}_{h,a},c_{h,a}^{n-1}) \\
        \tilde{R}^a  &:=  - \ell_0^n(t) r_\text{FV}^{n+1} -  \ell_1^n(t) r_\text{FV}^{n} .
    \end{align*}    
    Note that all derivatives occurring in these definitions are to be understood in the distributional sense.
    We now aim to find a bound for the term
    \begin{align*}
        \int_0^T \norm{R_{\tilde{\rho}_a}}^2_{(H^1(\mathbb{T}^d))^\prime} \ dt = \int_0^T \sup_{\substack{\phi \in H^1, \\ \norm{\phi}_{H^1} \leq 1}} \left( \int_\Omega R_{\tilde{\rho}_a} \phi \ dx  \right)^2 dt.
    \end{align*}
    We will exploit the representation (\ref{residual_sum}) by bounding each summand individually.

    \begin{Lemma} Let $\phi \in H^1(\mathbb{T}^d)$ with $\norm{\phi}_{H^1} \leq 1$. For $t \in [t^n,t^{n+1}]$ and $n = 1,2,\ldots,N_t-1$, there holds
        \label{diff-res}
        \begin{align*}
            \int_{\mathbb{T}^d} \tilde{R}^D \phi \ dx &\leq \ell_0^n(t) \theta_D^{n+1} + \ell_1^n(t) \theta_D^{n} +  \theta_{D,a},
        \end{align*}
        where
        \begin{align*}
            \theta_D^{n} &:= \sum_{K \in \mathcal{T}_h} \int_{K} \Delta \tilde{\rho}_a^{n} \left(\mathcal{M}_K(\phi) - \phi \right) \ dx + c_R \left( \sum_{K \in \mathcal{T}_h} \sum_{F \in F_K} h_F \norm{\llbracket \nabla \tilde{\rho}_a^{n}\cdot n_{K,F} \rrbracket}_{L^2(F)}^2 \right)^{1/2},\\
            \theta_{D,a} & :=  12 C_3(h) C_S \varepsilon \bigg( \sum_{K \in \mathcal{T}_h} \frac{1}{|K|} \bigg)^{1/6}
        \end{align*}
        with constant $c_R := 2 c_{\text{Tr}} \sqrt{c_{usr}^2c_P^2 + 1}$ and $C_3(h) > 0$ as in (\ref{def:c3}).%, as well as $F \subset \partial K$.
    \end{Lemma}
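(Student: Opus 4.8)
The plan is to exploit the additive splitting $\tilde R^D = \ell_0^n(t)\bigl(\mathcal{D}_h(\rho_h^{n+1})-\Delta\tilde{\rho}^{n+1}\bigr)+\ell_1^n(t)\bigl(\mathcal{D}_h(\rho_h^{n})-\Delta\tilde{\rho}^{n}\bigr)$, which follows from $\tilde{\rho}=\ell_0^n\tilde{\rho}^{n+1}+\ell_1^n\tilde{\rho}^{n}$ and the linearity of the (distributional) Laplacian in the spatial variable. Since $\ell_0^n(t),\ell_1^n(t)\ge 0$, it suffices to prove the single-index estimate $\mathcal{E}^m:=\int_\Omega\bigl(\mathcal{D}_h(\rho_h^m)-\Delta\tilde{\rho}^m\bigr)\phi\,dx\le\theta_D^m$ for $m\in\{n,n+1\}$ and then take the convex combination. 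Throughout I interpret $-\Delta\tilde{\rho}^m$ through the weak Neumann pairing $\int_\Omega(-\Delta\tilde{\rho}^m)\phi\,dx=\int_\Omega\nabla\tilde{\rho}^m\cdot\nabla\phi\,dx$, consistent with the homogeneous Neumann condition built into \eqref{res_bdrKS}, so that $\mathcal{E}^m=\int_\Omega\mathcal{D}_h(\rho_h^m)\phi\,dx+\int_\Omega\nabla\tilde{\rho}^m\cdot\nabla\phi\,dx$.

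Next I would rewrite the discrete diffusion term so that it cancels against an element-wise integration by parts. Because $\mathcal{D}_h(\rho_h^m)$ is piecewise constant, $\int_\Omega\mathcal{D}_h(\rho_h^m)\phi\,dx=\sum_{K}\mathcal{M}_K(\phi)\int_K\mathcal{D}_h(\rho_h^m)\,dx$; invoking the flux-preservation property \eqref{morley:diff} of the Morley interpolant together with $\mathcal{D}_{K,F}=0$ on boundary faces and the divergence theorem on each $K$ gives $\int_K\mathcal{D}_h(\rho_h^m)\,dx=\sum_{F\in F_K}\int_F\nabla\tilde{\rho}^m\cdot n_{K,F}\,dS=\int_K\Delta\tilde{\rho}^m\,dx$, where $\Delta\tilde{\rho}^m$ is now the element-wise (broken) Laplacian. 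Performing the integration by parts $\int_\Omega\nabla\tilde{\rho}^m\cdot\nabla\phi\,dx=-\sum_K\int_K\Delta\tilde{\rho}^m\,\phi\,dx+\sum_K\int_{\partial K}(\nabla\tilde{\rho}^m\cdot n_K)\phi\,dS$ and adding the two contributions, the bulk terms combine into $\sum_K\int_K\Delta\tilde{\rho}^m(\mathcal{M}_K(\phi)-\phi)\,dx$, which is exactly the first term of $\theta_D^m$, leaving only the inter-element boundary term $J:=\sum_K\int_{\partial K}(\nabla\tilde{\rho}^m\cdot n_K)\phi\,dS$.

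It remains to bound $J$. Recombining the element boundaries into faces turns $J$ into $\sum_{F\in F_h}\int_F\llbracket\nabla\tilde{\rho}^m\cdot n_F\rrbracket\phi\,dS$. The decisive observation is that each of these jumps has vanishing mean over its face: on an interior face $F=K\cap L$ the antisymmetry $\mathcal{D}_{K,F}=-\mathcal{D}_{L,F}$ combined with \eqref{morley:diff} yields $\int_F\llbracket\nabla\tilde{\rho}^m\cdot n_F\rrbracket\,dS=\mathcal{D}_{K,F}(\rho_h^m)+\mathcal{D}_{L,F}(\rho_h^m)=0$, while on a boundary face $\int_F\nabla\tilde{\rho}^m\cdot n\,dS=\mathcal{D}_{K,F}(\rho_h^m)=0$. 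This lets me replace $\phi$ by $\phi-\mathcal{M}_K(\phi)$ on each face without changing the integral, after which I apply Cauchy--Schwarz on $F$, the trace inequality \eqref{Tr} to $\phi-\mathcal{M}_K(\phi)$, the Poincar\'e--Wirtinger inequality \eqref{BrH} to convert $\norm{\phi-\mathcal{M}_K(\phi)}_{L^2(K)}$ into $c_P h_K\norm{\phi}_{H^1(K)}$, and shape regularity (so that $h_K\le c_\text{usr}h_F$) to produce the correct $h_F^{1/2}$ weight. A final discrete Cauchy--Schwarz over all faces, using $\norm{\phi}_{H^1}\le 1$ and absorbing the trace, Poincar\'e and shape-regularity constants (together with the fact that interior faces are counted twice in $\sum_K\sum_{F\in F_K}$) into $c_R=2c_{\text{Tr}}\sqrt{c_\text{usr}^2c_P^2+1}$, delivers the claimed bound on $J$.

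The main obstacle is the mean-zero property of the normal-derivative jumps: without it, the trace inequality applied to the full $\phi$ would leave an uncontrolled $h_F^{-1/2}\norm{\phi}_{L^2(K)}$ contribution and the estimator would fail to scale linearly in $h$. Securing this property is precisely where the flux preservation \eqref{morley:diff} of the Morley reconstruction -- rather than any generic $H^1$ interpolant -- is essential; the remaining effort, namely tracking the various constants into $c_R$, is routine.
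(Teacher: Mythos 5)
Your proposal is correct and takes essentially the same route as the paper's proof: both convert the discrete diffusion term into the broken Laplacian paired with $\mathcal{M}_K(\phi)$ via the flux-preservation property (\ref{morley:diff}) and the divergence theorem, integrate by parts elementwise so the bulk terms combine into $\sum_K \int_K \Delta\tilde{\rho}\,(\mathcal{M}_K(\phi)-\phi)\,dx$, and then control the face-jump term by exploiting the mean-zero property of the normal-derivative jumps (from (\ref{morley:diff}) and $\mathcal{D}_{K,F}=-\mathcal{D}_{L,F}$) followed by the trace inequality (\ref{Tr}), the Poincar\'e--Wirtinger inequality (\ref{BrH}), shape regularity and a discrete Cauchy--Schwarz. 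Your identification of the mean-zero jump property as the decisive ingredient matches exactly the role it plays in the paper.
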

    \begin{proof}
        %Let further $t \in \left[ t^n, t^{n+1} \right]$. 
        We can write
        \begin{align*}
            \int_\mathbb{T}^d \tilde{R}^D \phi \ dx &= \left(\int_\mathbb{T}^d \ell_0^n(t) \mathcal{D}_h(\rho^{n+1}_{h,a}) + \ell_1^n(t)\mathcal{D}_h(\rho^{n}_{h,a})\right)\phi + \int_\Omega \nabla \tilde{\rho}_a \cdot \nabla \phi \ dx \\
            & = \sum_{K \in \mathcal{T}_h} \bigg( \int_K \bigg(\ell_0^n(t) \frac{1}{|K|} \sum_{F \in F_K} D_{K,F}(\rho^{n+1}_{h,a})+\ell_1^n(t) \frac{1}{|K|} \sum_{F \in F_K} D_{K,F}(\rho^{n}_{h,a}) \bigg) \phi \ dx \\
            & \quad \quad \quad \quad \quad \quad \quad \quad \quad \quad \quad \quad \quad \quad \quad \   + \int_{\partial K} \nabla \tilde{\rho}_a \cdot n_{K} \phi \ dS(x) - \int_K \Delta \tilde{\rho}_a \phi \ dx \bigg) ,
        \end{align*}
        where we used integration by parts on each element $K\in \mathcal{T}_h$.
        Remembering $\tilde{\rho}_a = \ell_0^n(t) \tilde{\rho}_a^{n+1} + \ell_1^n(t) \tilde{\rho}_a^{n}$, we can reduce the remaining calculation by looking at the $\ell_0^n(t)$-terms only and applying the following machinery analogously for the $\ell_1^n(t)$-terms.
        In this spirit, we consider
        \begin{align*}
            & \ell_0^n(t) \sum_{K \in \mathcal{T}_h}  \int_K \frac{1}{|K|}\sum_{F \in F_K}D_{K,F}(\rho^{n+1}_{h,a})\phi\ dx + \left(\int_{\partial K} \nabla \tilde{\rho}_a^{n+1} \cdot n_{K} \phi \ dS(x) -\int_K \Delta \tilde{\rho}_a^{n+1} \phi \ dx \right) \\
            = & \underbrace{\ell_0^n(t) \sum_{K \in \mathcal{T}_h} \sum_{F \in F_K}D_{K,F}(\rho^{n+1}_{h,a})\mathcal{M}_K(\phi) - \int_K \Delta \tilde{\rho}_a^{n+1} \phi \ dx}_{=: \ I} \\
            & \quad \quad \quad \quad \quad \quad \quad \quad \quad \quad \quad \quad \quad \quad \quad \quad \quad \quad \quad \quad  + \underbrace{\ell_0^n(t)\sum_{K \in \mathcal{T}_h} \int_{\partial K} \nabla \tilde{\rho}_a^{n+1} \cdot n_{K} \phi \ dS(x)}_{=: \ II}.
        \end{align*}
        Using (\ref{perturbed:diff}) and the divergence theorem, we can rewrite $I$ as
        \begin{align*}
            I %&=  \ell_0^n(t) \sum_{K \in \mathcal{T}_h} \sum_{E \in E_K}\int_E \nabla \tilde{\rho}^{n+1} \cdot n_{K,E} \ dS(x) \mathcal{M}_K(\phi) - \int_K \Delta \tilde{\rho}^{n+1} \phi \ dx \\
            &= \ell_0^n(t) \sum_{K \in \mathcal{T}_h} \int_{\partial K} \nabla \tilde{\rho}_a^{n+1} \cdot n_{K} \ dS(x) - \int_K \Delta \tilde{\rho}_a^{n+1} \phi \ dx \\
            & \quad\quad\quad\quad\quad\quad\quad\quad\quad\quad\quad\quad\quad\quad\quad\  + \sum_{F \in F_K} (\mathcal{D}_{K,F}(\rho_{h,a}^n)e_a^F + |F| (\nabla q_0 \cdot n_{K,F}) e_b^F) \mathcal{M}_K(\phi)  \\
            %&= \ell_0^n(t) \sum_{K \in \mathcal{T}_h} \int_{K} \Delta \tilde{\rho}^{n+1} \ dx \mathcal{M}_K(\phi) - \int_K \Delta \tilde{\rho}^{n+1} \phi \ dx \\
            &= \ell_0^n(t) \sum_{K \in \mathcal{T}_h} \int_{K} \Delta \tilde{\rho}_a^{n+1} \left(\mathcal{M}_K(\phi) - \phi \right) dx + \ell_0^n(t)\sum_{F \in F_K} (\mathcal{D}_{K,F}(\rho_{h,a}^n)e_a^F + |F| (\nabla q_0 \cdot n_{K,F}) e_b^F) \mathcal{M}_K(\phi).
        \end{align*}
        Using (\ref{def:c3}), we bound 
        \begin{align*}
            &\ell_0^n(t) \sum_{K \in \mathcal{T}_h} \sum_{F \in F_K} (\mathcal{D}_{K,F}(\rho_{h,a}^n)e_a^F + |F| (\nabla q_0 \cdot n_{K,F}) e_b^F) \mathcal{M}_K(\phi) \\
            &\quad\quad\quad\quad\quad\quad\quad\quad\leq  4C_3(h) \varepsilon \sum_{K \in \mathcal{T}_h}  \frac{1}{|K|} \int_K \phi \ dx \leq  4C_3(h) \varepsilon \sum_{K \in \mathcal{T}_h}  \frac{1}{|K|} \| 1 \|_{L^{6/5}(K)} \| \phi \|_{L^6(K)} \\
            &\quad\quad\quad\quad\quad\quad\quad\quad\leq  4C_3(h) \varepsilon \sum_{K \in \mathcal{T}_h}  \bigg(\frac{1}{|K|}\bigg)^{1/6}  \| \phi \|_{L^6(K)} \leq 4 C_3(h) C_S \varepsilon \bigg( \sum_{K \in \mathcal{T}_h} \frac{1}{|K|} \bigg)^{1/6} \| \phi \|_{H^1(\Omega)}.
        \end{align*}
        The other part of $I$ will be the diffusive contribution to the element-wise residual and explicitly enters the desired bound.
        %This term is the diffusive part of an element-wise residual that we will obtain in the following as the combination of terms that will result from the time residual and the convective residual.
        In order to estimate $II$, we observe that (\ref{perturbed:diff}) and the fact that $D_{K,F}(\rho_{h,a}^{n+1}) = - D_{L,F}(\rho_{h,a}^{n+1})$ for $F = K \cap L$ with $K,L \in \mathcal{T}_h$, imply
        \begin{align*}
            \int_F \llbracket \nabla \tilde{\rho}_a^{n+1} \cdot n_F \rrbracket \ dS(x) =  2(\mathcal{D}_{K,F}(\rho_{h,a}^n)e_a^F + |F| (\nabla q_0 \cdot n_{K,F}) e_b^F), \quad \text{for all } F \in F_h.
        \end{align*}

        \noindent With this at hand, we can rewrite 
        \begin{align*}
            II %&= \ell_0^n(t)\sum_{K \in \mathcal{T}_h} \int_{\partial K} \nabla \tilde{\rho}^{n+1} \cdot n_{K} \phi \ dS(x) \\
            %&= \ell_0^n(t)\sum_{E \in E_h} \int_{E} \llbracket \nabla \tilde{\rho}^{n+1} \cdot n_E \rrbracket \phi \ dS(x) \\
            &= \underbrace{\ell_0^n(t)\sum_{F \in F_h} \int_{F} \llbracket \nabla \tilde{\rho}_a^{n+1} \cdot n_F \rrbracket \left(\phi - \mathcal{M}_K(\phi) \right) \ dS(x)}_{ =: \  III}\\
            & \quad\quad\quad\quad\quad\quad\quad\quad\quad\quad\quad\quad\quad\quad\quad + \ell_0^n(t)2 \sum_{F \in F_h} (\mathcal{D}_{K,F}(\rho_{h,a}^n)e_a^F + |F| (\nabla q_0 \cdot n_{K,F}) e_b^F)\mathcal{M}_K(\phi),
        \end{align*}
        for any $K \in \mathcal{T}_h$ such that $F \subset \partial K$.
        We use Hölder's inequality, the Cauchy-Schwarz inequality for sums and (\ref{Tr}) to get
        \begin{align*}
            III %&\leq \sum_{\substack{E \in E_h, \\ E \subset \partial K}} \norm{\llbracket \nabla \tilde{\rho}^{n+1} \cdot n_E \rrbracket}_{L^2(E)} \norm{\phi - \mathcal{M}_K(\phi)}_{L^2(E)} \\
            %&= \sum_{\substack{E \in E_h, \\ E \subset \partial K}} |E|^{1/2} \norm{\llbracket \nabla \tilde{\rho}^{n+1} \cdot n_E\rrbracket}_{L^2(E)} |E|^{-1/2} \norm{\phi - \mathcal{M}_K(\phi)}_{L^2(E)} \\
            &\leq \left( \sum_{F \in F_h} h_F \norm{\llbracket \nabla \tilde{\rho}_a^{n+1}\cdot n_F \rrbracket}_{L^2(F)}^2 \right)^{1/2} \left( \sum_{F \in F_h} h_F^{-1} \norm{\phi - \mathcal{M}_K(\phi)}_{L^2(F)}^2 \right)^{1/2} \\
            &\leq \left( \sum_{F \in F_h} h_F \norm{\llbracket \nabla \tilde{\rho}_a^{n+1}\cdot n_F \rrbracket}_{L^2(F)}^2 \right)^{1/2} \\
            & \quad\quad\quad\quad\quad\quad\quad\quad\quad \times \left( \sum_{F \in F_h} c_{\text{Tr}}^2 h_F^{-2} \norm{\phi - \mathcal{M}_K(\phi)}_{L^2(K)}^2 + c_{\text{Tr}}^2\norm{\nabla \phi}_{L^2(K)}^2 \right)^{1/2}.
        \end{align*}

        \noindent Using that our mesh sequence is uniformly shape regular and using (\ref{BrH}), we can further estimate
        \begin{align*}
            III &\leq \left( \sum_{F \in F_h} h_F \norm{\llbracket \nabla \tilde{\rho}_a^{n+1}\cdot n_F \rrbracket}_{L^2(F)}^2 \right)^{1/2} \left( c_{\text{Tr}}^2 \sum_{F \in F_h} \left(c_P^2  \frac{h_K^2}{h_F^{2}} +1 \right) |\phi|_{H^1(K)}^2 \right)^{1/2} \\
            &\leq \left( \sum_{F \in F_h} h_F \norm{\llbracket \nabla \tilde{\rho}_a^{n+1} \cdot n_F\rrbracket}_{L^2(F)}^2 \right)^{1/2} \left( 4 c_{\text{Tr}}^2 \sum_{K \in \mathcal{T}_h} \left(c_P^2 c_{usr}^2 + 1 \right) |\phi|_{H^1(K)}^2\right)^{1/2} \\
            &\leq c_R \left( \sum_{F \in F_h} h_F \norm{\llbracket \nabla \tilde{\rho}_a^{n+1}\cdot n_F \rrbracket}_{L^2(F)}^2 \right)^{1/2} \norm{\phi}_{H^1}.
            %&\leq c_R \left( \sum_{K \in \mathcal{T}_h} \sum_{E \in E_K} |E| \norm{\llbracket \nabla \tilde{\rho}^{n+1}\cdot n_{K,E} \rrbracket}_{L^2(E)}^2 \right)^{1/2},
        \end{align*}
        Here, the factor $4$ is due the fact that tetrahedra have four faces.
        Thus, we obtain 
        \begin{align*}
            II \leq c_R \left( \sum_{F \in F_h} h_F \norm{\llbracket \nabla \tilde{\rho}_a^{n+1}\cdot n_F \rrbracket}_{L^2(F)}^2 \right)^{1/2} + 8 C_3(h) C_S \varepsilon \bigg( \sum_{K \in \mathcal{T}_h} \frac{1}{|K|} \bigg)^{1/6},
        \end{align*}
        where the bound for the algebraic error term works analogous to before.
        This implies the desired upper bound.
        
    \end{proof}

    \begin{Lemma}
        \label{time-res}
        Let $\phi \in H^1(\mathbb{T}^d)$ with $\norm{\phi}_{H^1} \leq 1$. For $t \in [t^n,t^{n+1}]$ and $n = 1,2,\ldots,N_t-1$, there holds
        \begin{align*}
            \int_{\mathbb{T}^d} \tilde{R}^T \phi \ dx \leq& \left( \ell_0^n(t) + \ell_1^n(t) \right) \sum_{K \in \mathcal{T}_h}\int_K \frac{\tilde{\rho}_a^{n+1}-\tilde{\rho}_a^{n}}{\Delta t^n} \left(\phi-\mathcal{M}_K(\phi)\right) \ dx \\ 
            & \quad + \left(\sum_{K \in \mathcal{T}_h} \|\frac{\tilde{\rho}_a^{n+1}-\tilde{\rho}_a^{n}}{\Delta t^n} - \frac{\rho_{K,a}^{n+1}-\rho_{K,a}^{n}}{\Delta t^n}\|_{L^2(K)}^2\right)^{1/2} \\
            & \quad + \left(\sum_{K \in \mathcal{T}_h} |K| \left(\frac{\rho_{K,a}^{n+1}-\rho_{K,a}^{n}}{\Delta t^n} - \frac{\rho_{K,a}^{n}-\rho_{K,a}^{n-1}}{\Delta t^{n-1}}\right)^2 \right)^{1/2}.
        \end{align*}
    \end{Lemma}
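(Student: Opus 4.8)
The plan is to exploit the partition-of-unity identity $\ell_0^n(t)+\ell_1^n(t)=1$, valid because $\ell_0^n(t)+\ell_1^n(t)=\frac{(t-t^n)+(t^{n+1}-t)}{\Delta t^n}=1$, to rewrite the time residual in a form matched to the three terms in the claim. Abbreviating the Morley difference quotient by $D^n:=\frac{\tilde{\rho}^{n+1}-\tilde{\rho}^{n}}{\Delta t^n}$ and the two finite-volume difference quotients by $d^n:=\frac{\rho_h^{n+1}-\rho_h^{n}}{\Delta t^n}$ and $d^{n-1}:=\frac{\rho_h^{n}-\rho_h^{n-1}}{\Delta t^{n-1}}$, the identity lets me write $\tilde{R}^T=\ell_0^n(D^n-d^n)+\ell_1^n(D^n-d^{n-1})$. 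The decisive structural fact is that $d^n$, $d^{n-1}$ and their difference are piecewise constant on $\mathcal{T}_h$ (they are built from the elementwise-constant values $\rho_K^n\in R_h$), whereas $D^n$ is not; this is what separates the genuinely non-constant contribution from the rest.

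First I would test against $\phi$ and, on each $K\in\mathcal{T}_h$, split $\phi=(\phi-\mathcal{M}_K(\phi))+\mathcal{M}_K(\phi)$. Since every finite-volume difference quotient is constant on $K$ and $\int_K(\phi-\mathcal{M}_K(\phi))\,dx=0$, the contributions of $d^n$ and $d^{n-1}$ against the oscillation $\phi-\mathcal{M}_K(\phi)$ vanish, so only $D^n$ survives there. Collecting the surviving pieces from both summands yields exactly the factor $(\ell_0^n(t)+\ell_1^n(t))$ in front of $\sum_{K}\int_K D^n(\phi-\mathcal{M}_K(\phi))\,dx$, i.e. the first term of the bound. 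I would deliberately leave this term in its $\phi$-dependent form, mirroring the treatment of the analogous oscillation term $\sum_K\int_K\Delta\tilde{\rho}^n(\mathcal{M}_K(\phi)-\phi)\,dx$ in Lemma~\ref{diff-res}, so that the temporal and diffusive contributions can be merged before (\ref{BrH}) is invoked.

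It then remains to handle the mean contributions $\ell_0^n\sum_K\mathcal{M}_K(\phi)\int_K(D^n-d^n)\,dx+\ell_1^n\sum_K\mathcal{M}_K(\phi)\int_K(D^n-d^{n-1})\,dx$. Using $D^n-d^{n-1}=(D^n-d^n)+(d^n-d^{n-1})$ and $\ell_0^n+\ell_1^n=1$, this regroups as $\sum_K\mathcal{M}_K(\phi)\int_K(D^n-d^n)\,dx$ plus $\ell_1^n\sum_K\mathcal{M}_K(\phi)\int_K(d^n-d^{n-1})\,dx$. For the first I would write it as $\sum_K\int_K\mathcal{M}_K(\phi)(D^n-d^n)\,dx$, apply Cauchy--Schwarz on each $K$ together with $\norm{\mathcal{M}_K(\phi)}_{L^2(K)}\le\norm{\phi}_{L^2(K)}$, then Cauchy--Schwarz over the elements and $\norm{\phi}_{L^2}\le\norm{\phi}_{H^1}\le1$, producing precisely the second term $(\sum_K\norm{D^n-d^n}_{L^2(K)}^2)^{1/2}$. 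For the acceleration contribution, since $d^n-d^{n-1}$ is constant on $K$ I would factor it out, bound $\int_K\phi\,dx\le|K|^{1/2}\norm{\phi}_{L^2(K)}$ by Cauchy--Schwarz, and then apply the discrete Cauchy--Schwarz over elements together with $\ell_1^n\le1$ and $\norm{\phi}_{L^2}\le1$, giving the third term $(\sum_K|K|(\frac{\rho_K^{n+1}-\rho_K^{n}}{\Delta t^n}-\frac{\rho_K^{n}-\rho_K^{n-1}}{\Delta t^{n-1}})^2)^{1/2}$.

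The individual estimates are routine once the splitting is fixed, so the only real difficulty is the bookkeeping: recognizing that $\ell_0^n+\ell_1^n=1$ is exactly what collapses the oscillation terms onto $D^n$ alone, and that the piecewise constancy of the finite-volume difference quotients is what makes the $(\phi-\mathcal{M}_K(\phi))$ integrals against $d^n$ and $d^{n-1}$ drop out while leaving clean piecewise-constant discrepancies for Cauchy--Schwarz. I would also keep the restriction $n\ge1$ in mind, since $d^{n-1}$ is undefined for $n=0$; that case is handled separately, as announced before the residual decomposition.
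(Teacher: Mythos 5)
Your proposal is correct and follows essentially the same route as the paper's proof: both hinge on the identity $\ell_0^n(t)+\ell_1^n(t)=1$ to reduce $\tilde{R}^T$ to the discrepancy $\frac{\tilde{\rho}^{n+1}-\tilde{\rho}^{n}}{\Delta t^n}-\frac{\rho_h^{n+1}-\rho_h^{n}}{\Delta t^n}$ plus an $\ell_1^n$-weighted acceleration term, split $\phi$ into its elementwise mean and oscillation (exploiting that the finite-volume difference quotients are piecewise constant so their pairing with $\phi-\mathcal{M}_K(\phi)$ drops out), and finish with H\"older, $\norm{\mathcal{M}_K(\phi)}_{L^2(K)}\le\norm{\phi}_{L^2(K)}$, and Cauchy--Schwarz over elements. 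The only cosmetic difference is that you keep the two $\ell_0^n$, $\ell_1^n$ summands separate and regroup afterwards, whereas the paper collapses them at the outset; the resulting three terms are identical.
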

    \begin{proof}
        We can rewrite the temporal part of the residual as
        \begin{align*}
            \tilde{R}^T %&= \frac{\tilde{\rho}^{n+1}-\tilde{\rho}^{n}}{\Delta t^n} - \ell_0^n(t)\frac{\rho_h^{n+1}-\rho_h^{n}}{\Delta t^n} - \ell_1^n(t) \frac{\rho_h^{n}-\rho_h^{n-1}}{\Delta t^{n-1}} \\
            %& = \frac{\tilde{\rho}^{n+1}-\tilde{\rho}^{n}}{\Delta t^n} - \ell_0^n(t)\frac{\rho_h^{n+1}-\rho_h^{n}}{\Delta t^n} - \ell_1^n(t)\frac{\rho_h^{n+1}-\rho_h^{n}}{\Delta t^n} + \ell_1^n(t)\frac{\rho_h^{n+1}-\rho_h^{n}}{\Delta t^n} - \ell_1^n(t) \frac{\rho_h^{n}-\rho_h^{n-1}}{\Delta t^{n-1}} \\
            %& = \frac{\tilde{\rho}^{n+1}-\tilde{\rho}^{n}}{\Delta t^n} - \left(\ell_0^n(t) + \ell_1^n(t)\right)\frac{\rho_h^{n+1}-\rho_h^{n}}{\Delta t^n} + \ell_1^n(t) \left(\frac{\rho_h^{n+1}-\rho_h^{n}}{\Delta t^n} - \frac{\rho_h^{n}-\rho_h^{n-1}}{\Delta t^{n-1}}\right), \\
            & = \frac{\tilde{\rho}_a^{n+1}-\tilde{\rho}_a^{n}}{\Delta t^n} - \frac{\rho_{h,a}^{n+1}-\rho_{h,a}^{n}}{\Delta t^n} + \ell_1^n(t) \left(\frac{\rho_{h,a}^{n+1}-\rho_{h,a}^{n}}{\Delta t^n} - \frac{\rho_{h,a}^{n}-\rho_{h,a}^{n-1}}{\Delta t^{n-1}}\right),
        \end{align*}
        due to $\ell_0^n(t) + \ell_1^n(t) = 1$. %= \frac{t-t^n}{\Delta t^n} + \frac{t^{n+1}-t}{\Delta t^n} 
        We consider 
        \begin{align*}
            \int_{\mathbb{T}^d} \tilde{R}^T \phi \ dx &=  \underbrace{ \sum_{K \in \mathcal{T}_h}\int_K \frac{\tilde{\rho}_a^{n+1}-\tilde{\rho}_a^{n}}{\Delta t^n} \phi - \frac{\rho_{K,a}^{n+1}-\rho_{K,a}^{n}}{\Delta t^n} \phi \ dx}_{=:\ I} \\
            & \quad\quad\quad\quad\quad\quad\quad\quad\quad\quad\quad\quad + \underbrace{ \sum_{K \in \mathcal{T}_h}\int_K \ell_1^n(t) \left(\frac{\rho_{K,a}^{n+1}-\rho_{K,a}^{n}}{\Delta t^n} - \frac{\rho_{K,a}^{n}-\rho_{K,a}^{n-1}}{\Delta t^{n-1}}\right) \phi \ dx}_{=:\ II} .
        \end{align*}
        Rewriting $I$ gives
        \begin{align*}
            I &=  \sum_{K \in \mathcal{T}_h}\int_K \frac{\tilde{\rho}_a^{n+1}-\tilde{\rho}_a^{n}}{\Delta t^n} \left(\phi-\mathcal{M}_K(\phi)\right) \ dx + \sum_{K \in \mathcal{T}_h}\int_K \frac{\tilde{\rho}_a^{n+1}-\tilde{\rho}_a^{n}}{\Delta t^n}\mathcal{M}_K(\phi) - \frac{\rho_{K,a}^{n+1}-\rho_{K,a}^{n}}{\Delta t^n} \phi \ dx.
        \end{align*}
        The first term will be the time contribution to the element-wise residual and explicitly enters the desired bound. Further, using  Hölder's inequality with the constant one function and the Cauchy-Schwarz inequality as before, we find
        \begin{align*}
            & \sum_{K \in \mathcal{T}_h}\int_K \frac{\tilde{\rho}_a^{n+1}-\tilde{\rho}_a^{n}}{\Delta t^n}\mathcal{M}_K(\phi) - \frac{\rho_{K,a}^{n+1}-\rho_{K,a}^{n}}{\Delta t^n} \phi \ dx \\
            =& \sum_{K \in \mathcal{T}_h}\int_K \left( \frac{\tilde{\rho}_a^{n+1}-\tilde{\rho}_a^{n}}{\Delta t^n} - \frac{\rho_{K,a}^{n+1}-\rho_{K,a}^{n}}{\Delta t^n}\right) \mathcal{M}_K(\phi) \ dx \\
            \leq& \sum_{K \in \mathcal{T}_h}\norm{\frac{\tilde{\rho}_a^{n+1}-\tilde{\rho}_a^{n}}{\Delta t^n} - \frac{\rho_{K,a}^{n+1}-\rho_{K,a}^{n}}{\Delta t^n}}_{L^2(K)}  \norm{\mathcal{M}_K(\phi)}_{L^2(K)} \\
            %\leq& \sum_{K \in \mathcal{T}_h}\int_K \left( \frac{\tilde{\rho}^{n+1}-\tilde{\rho}^{n}}{\Delta t^n} - \frac{\rho_K^{n+1}-\rho_K^{n}}{\Delta t^n}\right) \ dx  \left(\frac{1}{|K|}\right)^{\frac{1}{2}} \norm{\phi}_{L^2(K)} \\
            % \leq& \sum_{K \in \mathcal{T}_h} \norm{\frac{\tilde{\rho}^{n+1}-\tilde{\rho}^{n}}{\Delta t^n} - \frac{\rho_K^{n+1}-\rho_K^{n}}{\Delta t^n}}_{L^2(K)} \norm{1}_{L^2(K)}  \left(\frac{1}{|K|}\right)^{\frac{1}{2}} \norm{\phi}_{L^2(K)} \\
            % \leq& \sum_{K \in \mathcal{T}_h} \norm{\frac{\tilde{\rho}^{n+1}-\tilde{\rho}^{n}}{\Delta t^n} - \frac{\rho_K^{n+1}-\rho_K^{n}}{\Delta t^n}}_{L^2(K)} \norm{\phi}_{L^2(K)}\\
            \leq& \left(\sum_{K \in \mathcal{T}_h} \norm{\frac{\tilde{\rho}_a^{n+1}-\tilde{\rho}_a^{n}}{\Delta t^n} - \frac{\rho_{K,a}^{n+1}-\rho_{K,a}^{n}}{\Delta t^n}}_{L^2(K)}^2\right)^{1/2} \norm{\phi}_{L^2}.
        \end{align*}
        \noindent In order to deal with $II$, we again apply Hölder's inequality and the Cauchy-Schwarz inequality for sums, which yields
        \begin{align*}
            II &\leq \left(\sum_{K \in \mathcal{T}_h} |K| \left(\frac{\rho_{K,a}^{n+1}-\rho_{K,a}^{n}}{\Delta t^n} - \frac{\rho_{K,a}^{n}-\rho_{K,a}^{n-1}}{\Delta t^{n-1}}\right)^2 \right)^{1/2} \norm{\phi}_{L^2}.
        \end{align*}
    \end{proof}
    
    \noindent Before tackling the convective part of the residual, we need some further preparation.
    As the convective term $\divergence(\tilde{\rho}_a^{n+1}\nabla c_{h,a}^n)$ is only well-defined in a point-wise sense on dual elements and as the Laplacian of the Morley interpolant $\Delta \tilde{\rho}_a^{n+1}$ is only well-defined on primal elements, we introduce the 'intersected' mesh of the primal mesh $\mathcal{T}_h$ and the dual mesh $\mathcal{S}_{\mathcal{T}_h}$, i.e.
    \begin{align*}
        \mathcal{C}_{\mathcal{S}_{\mathcal{T}_h}}^{\mathcal{T}_h} := \left\{ T \ : \ T = K\cap L, \ K \in \mathcal{T}_h, \ L \in \mathcal{S}_{\mathcal{T}_h} \text{ and } |T|_3>0  \right\}.
    \end{align*}
    In words, the 'intersected' mesh consists of all intersections of primal and dual elements with positive three-dimensional Lebesgue measure.
    Note that the convective term $\divergence(\tilde{\rho}_a^{n+1} \nabla c_{h,a}^n)$ as well as the Laplacian $\Delta \tilde{\rho}_a^{n+1}$ are well-defined on all 'intersected' elements.
    
    \noindent We denote the set of all faces of the 'intersected' mesh by $C_h$ and the set of all interior faces and boundary faces by $C_h^{int}$ and $C_h^{bdr}$, respectively. 
    Given a tetrahedron $T \in \mathcal{C}_{\mathcal{S}_{\mathcal{T}_h}}^{\mathcal{T}_h}$, we denote the set consisting of its four faces by $C_T$.
    Further, we denote the unique element $K\in \mathcal{T}_h$ with $T \subset K$ by $K_T$.
  
    \noindent Furthermore, we want to differentiate between 'intersected' faces $C \in C_h$ which are part of a primal face $F \in F_h$ and those which are part of a dual face $S \in S_h$.
    In this spirit, we define
    \begin{align*}
        C_h^{F_h} &:= \left\{C \in C_h : \exists F \in F_h, \text{ such that } C \subset F\right\}, \\ C_h^{S_h} &:= \left\{C \in C_h : \exists S \in S_h, \text{ such that } C \subset S\right\}.
    \end{align*}
    Note that we have $C_h =  C_h^{F_h} \stackrel{.}{\cup} C_h^{S_h}$.
    %Analogously, for each $T \in \mathcal{C}_{\mathcal{S}_{\mathcal{T}_h}}^{\mathcal{T}_h}$, we define $C_T^{F_h^{int}}$ and $C_T^{S_h}$ by replacing $C_h$ by $C_T$ in the definitions above.
    This will become relevant in the sequel.
    \begin{Lemma}
        \label{conv-res}
        Let $\phi \in H^1(\mathbb{T}^d)$ with $\norm{\phi}_{H^1} \leq 1$. For $t \in [t^n,t^{n+1}]$ and $n = 1,2,\ldots,N_t-1$, there holds
        \begin{align*}
            \int_\Omega \tilde{R}^C \phi \ dx \leq& \   \ell_0^n(t)\theta_C^{n+1} + \ell_1^n(t)\theta_C^n + \sum_{K \in \mathcal{T}_h} \theta_{K}^n,
        \end{align*}
        where 
        \begin{align*}
            \theta_{K}^n &:= C_{\text{ell}}\big(\left(\norm{\tilde{\rho}_a^{n+1}}_{L^\infty} + \norm{\tilde{\rho}_a^{n} - \tilde{\rho}_a^{n+1}}_{L^\infty} \right)\norm{\tilde{\rho}_a^{n} - \tilde{\rho}_a^{n+1}}_{L^2(K)} + \norm{\tilde{\rho}^{n}_a}_{L^\infty}  \norm{\tilde{\rho}_a^{n-1} - \tilde{\rho}_a^{n}}_{L^2(K)}\big), \\
            \theta_C^n &:= \sum_{T \in \mathcal{C}_{\mathcal{S}_{\mathcal{T}_h}}^{\mathcal{T}_h}}  \int_T \divergence \left(\tilde{\rho}_a^{n} \nabla c_{h,a}^{n-1} \right) \big(\phi  - \mathcal{M}_{K_T}(\phi)\big) \ dx  \\
            & \quad\quad\quad + \tilde{c}_R \bigg( \sum_{T \in \mathcal{C}_{\mathcal{S}_{\mathcal{T}_h}}^{\mathcal{T}_h} }\sum_{S \in C_T^{S_h}} h_S \norm{\llbracket \nabla c_{h,a}^{n-1} \cdot n_{T,S} \rrbracket \tilde{\rho}_a^{n} }_{L^2(S)}^2 \bigg)^{1/2} + \norm{\tilde{\rho}_a^{n}}_{L^\infty} \eta_{\Omega}^{n-1}  \\
            & \quad\quad\quad + \tilde{c}_R \bigg( \sum_{T \in \mathcal{C}_{\mathcal{S}_{\mathcal{T}_h}}^{\mathcal{T}_h} }\sum_{F \in C_T^{F_h}} h_F \norm{\big(\nabla c_{h,a}^{n-1} \cdot n_{T,F}\big) \big(\tilde{\rho}_a^{n} - \frac{1}{|F|} \mathcal{C}_F(\rho_{h,a}^{n-1})\big)}_{L^2(F)}^2  \bigg)^{1/2}, \\
            \eta_\Omega^{n} &:= \tilde{C}^{\text{sz}}\left( \sum_{\substack{T \in \mathcal{S}_{\mathcal{T}_h}}} \eta_{T,n}^2 + \sum_{\substack{T \in \mathcal{S}_{\mathcal{T}_h}}} h_{T}^2 \norm{\tilde{\rho}_a^n - \Pi_1\tilde{\rho}_a^n}_{L^2(T)}^2 + \frac43 \sum_{\substack{T \in \mathcal{S}_{\mathcal{T}_h}}} \norm{\rho_{h,a}^n - \tilde{\rho}_a^n}_{L^2(T)}^2\right)^{1/2} + \tilde{C}^{\text{sz}} \theta_\text{FE}^n, \\
            \eta_{T,n}^2 &:=  h_T^2\norm{c_{h,a}^n - \Pi_1 \tilde{\rho}_a^n}_{L^2(T)}^2 + \frac{1}{2}\sum_{F \in S_T} h_F \norm{\llbracket \nabla c_{h,a}^n \cdot n_F\rrbracket}_{L^2(F)}^2,
        \end{align*}
        with $\Pi_1$ as defined in Subsection~\ref{FE-scheme-section} and constant $\tilde{c}_R := c_{\text{Tr}} \sqrt{12(c_{usr}^2 c_P^2+1)} $, $\theta_\text{FE}^n$ defined as in (\ref{FE_theta}) and $\tilde{C}^{\text{sz}} > 0$, which is the stability constant of the Scott-Zhang interpolant, which one can compute explicitly by investigating the proof, see \cite[Theorem 4.6]{Bartels2015}
    \end{Lemma}
    \begin{proof}
        For any $\phi \in H^1$, we observe
        \begin{align*}
            \int_\Omega \tilde{R}^C \phi \ dx =& - \int_\Omega \left(\ell_0^n(t)\tilde{\rho}_a^{n+1}(x)+\ell_1^n(t)\tilde{\rho}_a^{n}(x)\right)\left(\ell_0^n(t)\nabla {\tilde{c}_a}^{n+1}(x)+\ell_1^n(t)\nabla {\tilde{c}_a}^{n}(x)\right) \cdot \nabla \phi \ dx \\
            & - \int_\Omega \left( \ell_0^n(t)\mathcal{C}_h(\rho^{n}_{h,a},c_{h,a}^n) + \ell_1^n(t)\mathcal{C}_h(\rho^{n-1}_{h,a},c_{h,a}^{n-1}) \right) \phi \ dx \\
            =& \ \ I + II,
        \end{align*}
        where 
        \begin{align*}
            I :=& \int_\Omega \bigg( \left( \ell_0^n(t) \tilde{\rho}_a^{n+1} \nabla \tilde{c}_a^n + \ell_1^n(t) \tilde{\rho}_a^{n} \nabla \tilde{c}_a^{n-1} \right) \\
            & \quad - \left(\ell_0^n(t)\tilde{\rho}_a^{n+1}+\ell_1^n(t)\tilde{\rho}_a^{n}\right)\left(\ell_0^n(t)\nabla {\tilde{c}_a}^{n+1}+\ell_1^n(t)\nabla {\tilde{c}_a}^{n}\right)\bigg) \cdot \nabla \phi \ dx, \quad \text{and} \\
            II :=&  - \int_\Omega \left( \ell_0^n(t) \tilde{\rho}_a^{n+1} \nabla \tilde{c}_a^n + \ell_1^n \tilde{\rho}_a^{n} \nabla \tilde{c}_a^{n-1} \right) \cdot \nabla \phi  + \left( \ell_0^n(t)\mathcal{C}_h(\rho^{n}_{h,a},c_{h,a}^n) + \ell_1^n(t)\mathcal{C}_h(\rho^{n-1}_{h,a},c_{h,a}^{n-1}) \right) \phi \ dx.
        \end{align*}
        As a preparation to bound $I$, we follow ideas from \cite{Kolbe2023} and observe that for $a_1,a_2,b_1,b_2 \in \mathbb{R}$
        \begin{align}
            \label{a1b1l0}
            \left(\ell_0^n a_1 + \ell_1^n a_2 \right)\left(\ell_0^n b_1 + \ell_1^n b_2 \right) - \ell_0^n a_1b_1 - \ell_1^n a_2b_2 = \left(a_1-a_2\right)\left(b_1-b_2\right)\left(\tilde{t}^2-\tilde{t}\right),
        \end{align}
        for $t \in \left[ t^n, t^{n+1} \right]$, where $\tilde{t} := \frac{t-t^n}{\Delta t^n}$.
        %One can see this by factoring out both sides.

        \noindent In order to exploit this observation, we further rewrite 
        \begin{align*}
            I =& \int_\Omega \big( \ell_0^n(t) \left(\tilde{\rho}_a^{n+1} \nabla \tilde{c}_a^n - \tilde{\rho}_a^{n+1} \nabla \tilde{c}_a^{n+1} \right) + \ell_1^n(t) \left(\tilde{\rho}_a^{n} \nabla \tilde{c}_a^{n-1} - \tilde{\rho}_a^{n} \nabla \tilde{c}_a^{n} \right) \big) \cdot \nabla \phi \ dx \\
            & + \int_\Omega \big( \ell_0^n(t)\tilde{\rho}_a^{n+1} \nabla \tilde{c}_a^{n+1} + \ell_1^n(t)\tilde{\rho}_a^{n} \nabla \tilde{c}_a^{n} \\
            & \quad\quad\quad\quad - \left(\ell_0^n(t)\tilde{\rho}_a^{n+1}+\ell_1^n(t)\tilde{\rho}_a^{n}\right)\left(\ell_0^n(t)\nabla {\tilde{c}_a}^{n+1}+\ell_1^n(t)\nabla {\tilde{c}_a}^{n}\right) \big) \cdot \nabla \phi \ dx.
        \end{align*}
        Applying (\ref{a1b1l0}) to the last two lines, using $\left(\tilde{t}^2-\tilde{t}\right) \leq 1$ and elliptic regularity, we get
        \begin{align*}
            I &\leq \left( \norm{\tilde{\rho}_a^{n+1} \left( \nabla \tilde{c}_a^n - \nabla \tilde{c}_a^{n+1} \right)}_{L^2} + \norm{\tilde{\rho}_a^{n} \left( \nabla \tilde{c}_a^{n-1} - \nabla \tilde{c}_a^{n}\right)}_{L^2} \right) \norm{\nabla \phi}_{L^2} \\
            & \quad \quad \quad \quad \quad \quad \quad  \quad \quad  \quad \quad \quad   + \norm{\left(\tilde{\rho}_a^{n+1}-\tilde{\rho}_a^{n}\right)\left(\nabla \tilde{c}_a^{n+1}-\nabla \tilde{c}_a^{n}\right)\left(\tilde{t}^2-\tilde{t}\right)}_{L^2} \norm{\nabla \phi}_{L^2} \\
            &\leq C_{\text{ell}}\left(\norm{\tilde{\rho}_a^{n+1}}_{L^\infty} + \norm{\tilde{\rho}_a^{n} - \tilde{\rho}_a^{n+1}}_{L^\infty} \right) \norm{\tilde{\rho}_a^{n} - \tilde{\rho}_a^{n+1}}_{L^2} + C_{\text{ell}}\norm{\tilde{\rho}_a^{n}}_{L^\infty} \norm{\tilde{\rho}_a^{n-1} - \tilde{\rho}_a^{n}}_{L^2} \\
            &\leq \sum_{K \in \mathcal{T}_h} \theta_{K}^n.
        \end{align*}
        \noindent In order to bound $II$, we again consider the terms with the factors $\ell_0^n(t)$ and $\ell_1^n(t)$ individually.
        We observe
        \begin{align*}
            &- \int_\Omega \ell_0^n(t) \tilde{\rho}_a^{n+1} \nabla \tilde{c}_a^n \cdot \nabla \phi  + \ell_0^n(t)\mathcal{C}_h(\rho^{n}_{h,a},c_{h,a}^n)  \phi \ dx \\
            = &\  \underbrace{- \ell_0^n(t)\sum_{K \in \mathcal{T}_h}  \int_K \tilde{\rho}_a^{n+1} \nabla \tilde{c}_a^n \cdot \nabla \phi \ dx}_{=: \ III} \ \underbrace{- \ell_0^n(t)\sum_{K \in \mathcal{T}_h} \sum_{F \in F_K\cap F_h^\text{int}} \mathcal{C}_F(\rho_{h,a}^{n}) \mathcal{M}_F(\nabla c^n_{h,a} \cdot n_{K,F}) \mathcal{M}_K(\phi)}_{=:\  IV},
        \end{align*}

        %We first bound the terms in $III$ and treat $IV$ thereafter.
        \noindent  We start by estimating $III$ and, in the course of this, see how $IV$ can be integrated into these estimates.
        As the integrand in $III$ is globally well-defined, we can swap the mesh on which we work and rewrite $III$ as
        \begin{align*}
            III %&= - \ell_0^n(t)\sum_{T \in \mathcal{C}_{\mathcal{S}_{\mathcal{T}_h}}^{\mathcal{T}_h}}  \int_T \tilde{\rho}^{n+1} \nabla \tilde{c}^n \cdot \nabla \phi \ dx \\ 
            & = \underbrace{- \ell_0^n(t)\sum_{T \in \mathcal{C}_{\mathcal{S}_{\mathcal{T}_h}}^{\mathcal{T}_h}}  \int_T \tilde{\rho}_a^{n+1} \left( \nabla \tilde{c}_a^n - \nabla c_{h,a}^n \right) \cdot \nabla \phi \ dx}_{=: \ III_1}\  \underbrace{- \ell_0^n(t)\sum_{T \in \mathcal{C}_{\mathcal{S}_{\mathcal{T}_h}}^{\mathcal{T}_h}} \int_T \tilde{\rho}_a^{n+1} \nabla c_{h,a}^n \cdot \nabla \phi \ dx}_{ =: \ III_2}.
        \end{align*}
        We estimate $III_1$ using Hölder's inequality and the Cauchy-Schwarz inequality for sums, which gives
        \begin{align*}
            III_1 %\leq \sum_{T \in \mathcal{C}_{\mathcal{S}_{\mathcal{T}_h}}^{\mathcal{T}_h}}  \norm{\tilde{\rho}^{n+1}}_{L^\infty(T)} \norm{\nabla \tilde{c}^n - \nabla c_h^n}_{L^2(T)} \norm{\nabla \phi}_{L^2(T)} \\
            &\leq \norm{\tilde{\rho}_a^{n+1}}_{L^\infty} \bigg( \sum_{T \in \mathcal{C}_{\mathcal{S}_{\mathcal{T}_h}}^{\mathcal{T}_h}}  \norm{\tilde{c}_a^n - c_{h,a}^n}_{H^1(T)}^2 \bigg)^{1/2} \bigg( \sum_{T \in \mathcal{C}_{\mathcal{S}_{\mathcal{T}_h}}^{\mathcal{T}_h}} \norm{\nabla \phi}_{L^2(T)}^2 \bigg)^{1/2}\\
            &\leq \norm{\tilde{\rho}_a^{n+1}}_{L^\infty} \norm{\tilde{c}_a^n - c_{h,a}^n}_{H^1} 
        \end{align*}

        \noindent The term $\norm{\tilde{c}^{n} - c_{h,a}^{n}}_{H^1}$ can be estimated using classical \emph{a posteriori} analysis for FE schemes for the Poisson problem \cite[Chapter 4.3]{Verfurth2013}, but taking the perturbed Galerkin orthogonality (\ref{pert_orth}) into account. This  yields
        \begin{align*}
            \norm{\tilde{c}_a^n - c_{h,a}^{n}}_{H^1} \leq \eta_{\Omega}^n + \tilde{C}^{\text{sz}} \| r_\text{FE}^n \|_{L^2} \leq \eta_{\Omega}^n + \tilde{C}^{\text{sz}} \theta_\text{FE}^n,
        \end{align*}
        using (\ref{FE_theta}).
        Next, we use integration by parts on each element to rewrite
        \begin{align*}
            III_2 &= - \ell_0^n(t)\sum_{T \in \mathcal{C}_{\mathcal{S}_{\mathcal{T}_h}}^{\mathcal{T}_h}} \int_T \tilde{\rho}_a^{n+1} \nabla c_{h,a}^n \cdot \nabla \left(\phi  - \mathcal{M}_{K_T}(\phi)\right) \ dx \\
            & =\ell_0^n(t)\sum_{T \in \mathcal{C}_{\mathcal{S}_{\mathcal{T}_h}}^{\mathcal{T}_h}}  \int_T \divergence \left(\tilde{\rho}_a^{n+1} \nabla c_{h,a}^n \right) \left(\phi  - \mathcal{M}_{K_T}(\phi)\right) \ dx \\
            & \quad\quad\quad\quad\quad\quad\quad\quad\underbrace{- \ell_0^n(t)\sum_{T \in \mathcal{C}_{\mathcal{S}_{\mathcal{T}_h}}^{\mathcal{T}_h}} \int_{\partial T} \tilde{\rho}_a^{n+1} (\nabla c_{h,a}^n \cdot n_{T,S}) \left(\phi  - \mathcal{M}_{K_T}(\phi)\right) \ dS(x)}_{ =: \ III_3}.
        \end{align*}

        \noindent The first term will be the convective contribution to the element-wise residual and explicitly enters the desired estimate. 
        We write $III_3$ in terms of a sum over the 'intersected' faces and combine it with the leftover term $IV$ to get
        % \begin{align*}
        %     III_5 &= \ell_0^n(t)\sum_{T \in \mathcal{C}_{\mathcal{S}_{\mathcal{T}_h}}^{\mathcal{T}_h}}  \int_{\partial T} (\nabla c_h^n \cdot n_{T}) \tilde{\rho}^{n+1} \ dS(x) \mathcal{M}_{K_T}(\phi)  \\
        %     &= \ell_0^n(t)\sum_{T \in \mathcal{C}_{\mathcal{S}_{\mathcal{T}_h}}^{\mathcal{T}_h}}  \sum_{C \in C_T} \int_{C} (\nabla c_h^n \cdot n_{T,C}) \tilde{\rho}^{n+1}  \ dS(x) \mathcal{M}_{K_T}(\phi).
        % \end{align*}
        % Then, using first (\ref{obs}) and then (\ref{obs1}), we can observe that
        \begin{align*}
            III_3  + IV 
            %&  = \ell_0^n(t)\sum_{T \in \mathcal{C}_{\mathcal{S}_{\mathcal{T}_h}}^{\mathcal{T}_h}}  \sum_{C \in C_T} \int_C (\nabla c_h^n \cdot n_{T,C}) \bigg(\tilde{\rho}^{n+1} \left(\mathcal{M}_{K_T}(\phi)-\phi\right) - \frac{1}{|C|} \mathcal{F}_C(\rho_h^{n+1})\mathcal{M}_{K_T}(\phi) \bigg) \ dS(x)  \\
            %& = \ell_0^n(t)\sum_{T \in\mathcal{C}_{\mathcal{S}_{\mathcal{T}_h}}^{\mathcal{T}_h}}  \sum_{S \in C_T} \int_S (\nabla c_h^n \cdot n_{T,S}) \big(\tilde{\rho}^{n+1} \left(\mathcal{M}_{K_T}(\phi)-\phi\right) - \frac{1}{|S|} \mathcal{F}_S(\rho_h^{n+1})\left(\mathcal{M}_{K_T}(\phi)-\phi\right) \big) \\
            = & \  \ell_0^n(t)\sum_{T \in\mathcal{C}_{\mathcal{S}_{\mathcal{T}_h}}^{\mathcal{T}_h}}  \sum_{C \in C_T} \int_C (\nabla c_{h,a}^n \cdot n_{T,C}) \tilde{\rho}_a^{n+1} \left(\mathcal{M}_{K_T}(\phi)-\phi\right) dS(x) \\
             &\  - \ell_0^n(t)\sum_{K \in \mathcal{T}_h} \sum_{F \in F_K\cap F_h^\text{int}} \int_F (\nabla c^n_{h,a} \cdot n_{K,F}) \frac{1}{|F|}\mathcal{C}_F(\rho_{h,a}^{n}) \mathcal{M}_K(\phi).
        \end{align*}
        We want to rewrite these double sums in terms of a sum over all 'intersected' faces.
        To this end, we consider two cases: First, we observe that on an interior dual face, i.e. $C \subset S \in S_h^{int}$, we see that $K_T = K$ for the unique $K \in \mathcal{T}_h$ with $C \subset K$. Additionally, the gradient of the FE solution $\nabla c_h^n$ jumps across $C$.
        Secondly, on an interior primal face, i.e. $C \subset F \in F_h^{int}$, we see that the gradient of the FE solution  $\nabla c_h^n$ does not jump and is well-defined on $C$.
        Additionally, the two sides of $C$ are multiplied by factors $(\mathcal{M}_{K}(\phi) - \phi)$ and $(\mathcal{M}_{L}(\phi) - \phi)$, for the two different primal elements $K,L \in \mathcal{T}_h$ with $F = K \cap L$.
        %Thirdly, if $C$ is part of the boundary, we have $\mathcal{C}_C(\rho_h^{n}) = 0$, no jump of $\nabla c_h^n$ and there is only one factor $(\mathcal{M}_{K}(\phi) - \phi)$, for the $K \in \mathcal{T}_h$ with $C \subset \partial K$.
        % In this spirit, we define
        % \begin{align*}
        %     C_h^{E_h^{int}} &:= \left\{C \in C_h : \exists E \in E_h^{int}, \text{ such that } C \subset E\right\} \quad \text{and} \quad \\ C_h^{S_h} &:= \left\{C \in C_h : \exists S \in S_h, \text{ such that } C \subset S\right\}.
        % \end{align*}
        % Analogously, for each $T \in \mathcal{C}_{\mathcal{S}_{\mathcal{T}_h}}^{\mathcal{T}_h}$, we define $C_T^{E_h^{int}}$ and $C_T^{S_h}$ by replacing $C_h$ by $C_T$ in the definitions above.
        % Note that we have $C_h =  C_h^{E_h^{int}} \stackrel{.}{\cup} C_h^{S_h}$, as the boundary edges are only taken into account once, in $C_h^{S_h}$.
        We thus get
        \begin{align*}
            &III_3 + IV = \  \underbrace{\ell_0^n(t)\sum_{S \in C_h^{S_h}} \int_S \llbracket \nabla c_{h,a}^n \cdot n_{S} \rrbracket \tilde{\rho}_a^{n+1} (\mathcal{M}_T(\phi)-\phi) \ dS(x)}_{ =: \ V} \\ %\label{dual_edges}
            & \ + \underbrace{\ell_0^n(t)\sum_{F \in C_h^{F_h^{int}}} \int_F (\nabla c_{h,a}^n \cdot n_{F}) \big(\tilde{\rho}_a^{n+1} - \frac{1}{|F|} \mathcal{C}_F(\rho_{h,a}^{n}) \big)(\mathcal{M}_K(\phi)-\mathcal{M}_L(\phi)) \ dS(x)}_{ =: \ VI}, %\label{primal_edges}
        \end{align*}
        where $T \in \mathcal{T}_h$ is such that $S \subset T$ and $K,L \in \mathcal{T}_h$ are such that $F \subset K \cap L$.
        Note that the case $C \subset \partial \Omega$ is covered in term $V$, due to the definition of jumps on the boundary.
        Applying the same arguments as for the jump terms in the proof of Lemma~\ref{diff-res}, we find
        \begin{align*}
            V &\leq  \tilde{c}_R \bigg(\sum_{S \in C_h^{S_h}} h_S \norm{\llbracket \nabla c_{h,a}^n \cdot n_{S} \rrbracket \tilde{\rho}_a^{n+1}}_{L^2(S)}^2 \bigg)^{1/2}.
            %&\leq \tilde{c}_R \bigg( \sum_{T \in \mathcal{C}_{\mathcal{S}_{\mathcal{T}_h}}^{\mathcal{T}_h} }\sum_{S \in C_T^{S_h}} \norm{\llbracket \nabla c_h^n \cdot n_{T,S} \rrbracket \tilde{\rho}^{n+1}}_{L^2(S)}^2 |S| \bigg)^{1/2}.
        \end{align*}

        \noindent Further, using (\ref{BrH}) and (\ref{Tr}) we observe
        \begin{align*}
            &h_F^{-1} \|\mathcal{M}_K(\phi)-\mathcal{M}_L(\phi)\|_{L^2(F)}^2 \leq  h_F^{-1}(\|\mathcal{M}_K(\phi) - \phi\|_{L^2(F)}^2 + \| \phi - \mathcal{M}_L(\phi)\|_{L^2(F)}^2) \\
            &\quad\quad\quad \leq c_{\text{Tr}}^2 \big(h_F^{-2} (\|\mathcal{M}_K(\phi)- \phi \|_{L^2(K)}^2 + \|\phi - \mathcal{M}_L(\phi)\|_{L^2(L)}^2) + \|\nabla \phi \|_{L^2(K)}^2 +  \|\nabla \phi \|_{L^2(L)}^2 \big)\\
            &\quad\quad\quad  \leq c_{\text{Tr}}^2 \big( c_P^2\frac{h_K^2}{h_F^2} \| \nabla \phi \|_{L^2(K)}^2 + c_P^2\frac{h_L^2}{h_F^2}\| \nabla \phi \|_{L^2(L)}^2 + \|\nabla \phi \|_{L^2(K)}^2 +  \|\nabla \phi \|_{L^2(L)}^2 \big).
        \end{align*}
        We thus find, again analogous to bounding the jump terms in the proof of Lemma~\ref{diff-res}, that
        \begin{align*}
            VI &\leq  \tilde{c}_R \bigg(\sum_{F \in C_h^{F_h^{int}}} h_F \norm{(\nabla c_{h,a}^n \cdot n_{F}) \big(\tilde{\rho}_a^{n+1} - \frac{1}{|F|} \mathcal{C}_F(\rho_{h,a}^{n})\big)}_{L^2(F)}^2  \bigg)^{1/2}.
            %&\leq \tilde{c}_R \bigg( \sum_{T \in \mathcal{C}_{\mathcal{S}_{\mathcal{T}_h}}^{\mathcal{T}_h} }\sum_{E \in C_T^{E_h^{int}}} \norm{(\nabla c_h^n \cdot n_{T,E}) \big(\tilde{\rho}^{n+1} - \frac{1}{|E|} \mathcal{C}_E(\rho_h^{n+1})\big)}_{L^2(E)}^2  |E| \bigg)^{1/2}.
        \end{align*}

        \noindent Using the same arguments to bound the terms with factor $\ell_1^n(t)$, we get the full estimate for $II$.

    \end{proof}

    \noindent We observe that the upper bounds in Lemmas~\ref{diff-res}, \ref{time-res} and \ref{conv-res} are not yet fully independent of $\phi$. 
    By summing, we can combine the terms that still depend on $\phi$ and thus obtain the element-wise residual (\ref{locres}), once with the factor $\ell_0^n(t)$ and once with $\ell_1^n(t)$.
    \noindent Using Hölder's inequality, (\ref{BrH}) and the Cauchy-Schwarz inequality for sums, we observe
    \begin{align}
        \label{locres}
        \begin{split}
        &\ell_0^n(t) \sum_{T \in \mathcal{C}_{\mathcal{S}_{\mathcal{T}_h}}^{\mathcal{T}_h}} \int_{T} \left(\frac{\tilde{\rho}_a^{n+1}-\tilde{\rho}_a^{n}}{\Delta t^n} + \divergence\left( \tilde{\rho}_a^{n+1} \nabla c_{h,a}^n \right) - \Delta \tilde{\rho}_a^{n+1} \right) \left( \phi - \mathcal{M}_{K_T}(\phi) \right) \ dx   \\
        & \quad\quad \leq c_P\bigg(\sum_{T \in \mathcal{C}_{\mathcal{S}_{\mathcal{T}_h}}^{\mathcal{T}_h}} \norm{\frac{\tilde{\rho}_a^{n+1}-\tilde{\rho}_a^{n}}{\Delta t^n} + \divergence\left( \tilde{\rho}_a^{n+1} \nabla c_{h,a}^n \right) - \Delta \tilde{\rho}_a^{n+1}}_{L^2(T)}^2  h_{K_T}^2 \bigg)^{1/2} \norm{\phi}_{H^1}.  
        \end{split}
    \end{align}
    This argument works analogously for the $\ell_1^n(t)$-terms.\\    
    
    \noindent Next, we investigate the residual in the first time step $t \in [t^0,t^1]$, i.e.
    \begin{align*}
        R_{\tilde{\rho}} &= \partial_t \tilde{\rho}_a + \divergence\left(\left(\ell_0^0(t)\tilde{\rho}_a^{1}(x)+\ell_1^0(t)\tilde{\rho}_a^{0}(x)\right)\left(\ell_0^0(t)\nabla {\tilde{c}_a}^{1}(x)+\ell_1^0(t)\nabla {\tilde{c}_a}^{0}(x)\right) \right) - \Delta \tilde{\rho}_a \\
        & \quad \quad \quad \quad \quad  \quad \quad \quad \quad \quad  \quad \quad \quad + (\ell_0^0(t)+\ell_1^0(t)) \left(  \mathcal{D}_h(\rho^{1}_{h,a}) - \mathcal{C}_h(\rho^{0}_{h,a},c_{h,a}^0) - \frac{\rho_{h,a}^{1}-\rho_{h,a}^{0}}{\Delta t^0} - r_{h}^1\right) \\
        & = \tilde{R}^D + \tilde{R}^T + \tilde{R}^C +\tilde{R}^a,
    \end{align*}
    where
    \begin{align*}
        \tilde{R}^D &:= \mathcal{D}_h(\rho^{1}_{h,a}) - \Delta \tilde{\rho}_a,\\
        \tilde{R}^T \  &:= \frac{\tilde{\rho}_a^{1}-\tilde{\rho}_a^{0}}{\Delta t^0} - \frac{\rho_{h,a}^{1}-\rho_{h,a}^{0}}{\Delta t^0}, \\
        \tilde{R}^C &:= \divergence\left(\left(\ell_0^0(t)\tilde{\rho}_a^{1}(x)+\ell_1^0(t)\tilde{\rho}_a^{0}(x)\right)\left(\ell_0^0(t)\nabla {\tilde{c}_a}^{1}(x)+\ell_1^0(t)\nabla {\tilde{c}_a}^{0}(x)\right) \right) - \mathcal{C}_h(\rho^{0}_{h,a},c_{h,a}^0), \\
        \tilde{R}^a &:=  r_h^{1}.
    \end{align*}
    \begin{Lemma}
        \label{diff-res0}
        Let $\phi \in H^1(\mathbb{T}^d)$ with $\norm{\phi}_{H^1} \leq 1$. For $t \in [t^0,t^1]$, there holds
        \begin{align*}
            \int_\Omega \tilde{R}^D \phi \ dx &\leq  \theta_D^{1} + \theta_{D,a} - \ell_1^0(t) \sum_{K \in \mathcal{T}_h} \int_K \nabla (\tilde{\rho}_a^{1} - \tilde{\rho}_a^{0}) \nabla \phi \ dx.
        \end{align*}
    \end{Lemma}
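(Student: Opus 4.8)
The plan is to reduce this initial-step estimate to the single-time-level version of the argument already carried out in Lemma~\ref{diff-res}, isolating the one extra contribution that arises because at $n=0$ there is no previous time step. Recall that on $[t^0,t^1]$ we have $\tilde{\rho} = \ell_0^0(t)\tilde{\rho}^1 + \ell_1^0(t)\tilde{\rho}^0$ and that the diffusive residual is $\tilde{R}^D = \mathcal{D}_h(\rho^1_h) - \Delta\tilde{\rho}$, where the single scheme correction $\mathcal{D}_h(\rho^1_h)$ carries the full temporal weight $\ell_0^0(t)+\ell_1^0(t)=1$ rather than being split between two levels as in the generic case.

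First I would test against $\phi$ and interpret the distributional term $-\Delta\tilde{\rho}$ through the natural (Neumann) weak form, exactly as in Lemma~\ref{diff-res}, so that
\begin{align*}
    \int_\Omega \tilde{R}^D \phi \ dx = \int_\Omega \mathcal{D}_h(\rho^1_h)\phi \ dx + \int_\Omega \nabla\tilde{\rho}\cdot\nabla\phi \ dx.
\end{align*}
Using $\ell_0^0(t) = 1 - \ell_1^0(t)$, I would then split the gradient as $\nabla\tilde{\rho} = \nabla\tilde{\rho}^1 - \ell_1^0(t)\nabla(\tilde{\rho}^1-\tilde{\rho}^0)$, which rewrites the right-hand side as
\begin{align*}
    \left(\int_\Omega \mathcal{D}_h(\rho^1_h)\phi \ dx + \int_\Omega \nabla\tilde{\rho}^1\cdot\nabla\phi \ dx\right) - \ell_1^0(t)\int_\Omega \nabla(\tilde{\rho}^1-\tilde{\rho}^0)\cdot\nabla\phi \ dx.
\end{align*}

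The bracketed quantity is precisely the object estimated in Lemma~\ref{diff-res} at a single time level: replaying the element-wise integration by parts, the flux identity (\ref{morley:diff}) together with the divergence theorem to produce $\sum_{K\in\mathcal{T}_h}\int_K\Delta\tilde{\rho}^1(\mathcal{M}_K(\phi)-\phi)\,dx$, and the jump estimate via (\ref{Tr}), (\ref{BrH}) and shape regularity, I would bound it by $\theta_D^1$. What remains is exactly $-\ell_1^0(t)\int_\Omega\nabla(\tilde{\rho}^1-\tilde{\rho}^0)\cdot\nabla\phi\ dx$ (with $\int_\Omega = \sum_{K\in\mathcal{T}_h}\int_K$, since $\tilde{\rho}^1-\tilde{\rho}^0\in H^1(\Omega)$), which yields the asserted bound.

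The only genuine subtlety — and it is mild — is the bookkeeping of the asymmetry at $n=0$: because the correction $\mathcal{D}_h(\rho^0_h)$ is absent, the $\ell_1^0(t)$-piece pairs $\mathcal{D}_h(\rho^1_h)$ with $\Delta\tilde{\rho}^0$ instead of with $\Delta\tilde{\rho}^1$, and this mismatch cannot be absorbed into a jump-type estimator. It therefore survives as the explicit gradient-difference term; since $\tilde{\rho}^1-\tilde{\rho}^0$ is computable and controllable (for instance by $\ell_1^0(t)\,|\tilde{\rho}^1-\tilde{\rho}^0|_{H^1}$ after applying Cauchy--Schwarz and $\|\phi\|_{H^1}\leq 1$), this poses no real difficulty for the subsequent residual estimate.
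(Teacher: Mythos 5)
Your proof is correct and follows essentially the same route as the paper: your splitting $\nabla\tilde{\rho} = \nabla\tilde{\rho}^1 - \ell_1^0(t)\nabla(\tilde{\rho}^1-\tilde{\rho}^0)$ is exactly the paper's ``add and subtract $\nabla\tilde{\rho}^1$ in the $\ell_1^0(t)$-terms,'' after which both arguments reduce to the single-level estimate of Lemma~\ref{diff-res} (with total weight $\ell_0^0(t)+\ell_1^0(t)=1$) plus the leftover gradient-difference term. Your reading of the $\int_K$ in the statement as $\sum_{K\in\mathcal{T}_h}\int_K = \int_\Omega$ is also the intended one, consistent with how the term is later absorbed in (\ref{earlyextra}).
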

    \begin{proof}
        The $\ell_0^0(t)$-terms work as before, see Lemma~\ref{diff-res}. For the $\ell_1^0(t)$-terms, we first add and subtract the gradient of the Morley reconstruction $\nabla \tilde{\rho}_a^1$ and then proceed as before.
    \end{proof}
    \begin{Lemma}
        \label{time-res0}
        Let $\phi \in H^1(\mathbb{T}^d)$ with $\norm{\phi}_{H^1} \leq 1$. For $t \in [t^0,t^1]$, there holds
        \begin{align*}
            \int_\Omega \tilde{R}^T \phi \ dx \leq& \sum_{K \in \mathcal{T}_h}\int_K \frac{\tilde{\rho}_a^{1}-\tilde{\rho}_a^{0}}{\Delta t^0} \left(\phi-\mathcal{M}_K(\phi)\right) \ dx + \bigg(\sum_{K \in \mathcal{T}_h} \norm{\frac{\tilde{\rho}_a^{1}-\tilde{\rho}_a^{0}}{\Delta t^0} - \frac{\rho_{K,a}^{1}-\rho_{K,a}^{0}}{\Delta t^0}}_{L^2(K)}^2\bigg)^{1/2}.
        \end{align*}
    \end{Lemma}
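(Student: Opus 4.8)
The plan is to treat this as the $n=0$ specialization of Lemma~\ref{time-res}. Since at the initial step there is no predecessor $t^{n-1}$, the temporal residual collapses to $\tilde{R}^T = (\tilde{\rho}^{1}-\tilde{\rho}^{0})/\Delta t^0 - (\rho_h^{1}-\rho_h^{0})/\Delta t^0$, so the entire $\ell_1^n(t)$-contribution (the term $II$ in Lemma~\ref{time-res}, coming from the difference of difference quotients at consecutive steps) is simply absent. Only the analog of the term $I$ survives, and the argument is a strict simplification of the earlier one.

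First I would write $\int_\Omega \tilde{R}^T \phi \,dx$ as a sum over the primal elements $K \in \mathcal{T}_h$, using that $\rho_h^n|_K = \rho_K^n$ is constant on each $K$. I would then insert $\pm \mathcal{M}_K(\phi)$ into the $\tilde{\rho}$-part, splitting the integrand into the element-wise residual contribution $\int_K \frac{\tilde{\rho}^{1}-\tilde{\rho}^{0}}{\Delta t^0}(\phi - \mathcal{M}_K(\phi)) \,dx$, which is the first term in the claimed bound and is kept verbatim, plus a remainder.

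For the remainder, the key observation is that since $\rho_K^n$ is constant on $K$ one has $\int_K \frac{\rho_K^{1}-\rho_K^{0}}{\Delta t^0}\phi \,dx = \int_K \frac{\rho_K^{1}-\rho_K^{0}}{\Delta t^0}\mathcal{M}_K(\phi)\,dx$, so the remainder rewrites as $\sum_K \int_K \big(\frac{\tilde{\rho}^{1}-\tilde{\rho}^{0}}{\Delta t^0}-\frac{\rho_K^{1}-\rho_K^{0}}{\Delta t^0}\big)\mathcal{M}_K(\phi)\,dx$. Applying H\"older's inequality element-by-element against the constant $\mathcal{M}_K(\phi)$, followed by the Cauchy-Schwarz inequality for sums, produces the factor $\big(\sum_K \norm{\frac{\tilde{\rho}^{1}-\tilde{\rho}^{0}}{\Delta t^0}-\frac{\rho_K^{1}-\rho_K^{0}}{\Delta t^0}}_{L^2(K)}^2\big)^{1/2}$ times $\big(\sum_K \norm{\mathcal{M}_K(\phi)}_{L^2(K)}^2\big)^{1/2}$, exactly as in the treatment of term $I$ in Lemma~\ref{time-res}.

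Finally I would close the estimate by noting $\sum_K \norm{\mathcal{M}_K(\phi)}_{L^2(K)}^2 = \sum_K |K|^{-1}(\int_K \phi)^2 \leq \sum_K \norm{\phi}_{L^2(K)}^2 = \norm{\phi}_{L^2}^2 \leq \norm{\phi}_{H^1}^2 \leq 1$, where the middle step is Cauchy-Schwarz applied on each element. This reduces the second factor to $1$ and yields the stated bound. I do not expect a genuine obstacle: the proof is routine and strictly easier than that of Lemma~\ref{time-res}, the only point deserving attention being that the mean-value substitution is \emph{exact} precisely because $\rho_h^0,\rho_h^1 \in R_h$ are piecewise constant, which is what leaves the clean $(\phi - \mathcal{M}_K(\phi))$ structure in the surviving element-wise residual term.
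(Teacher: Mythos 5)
Your proposal is correct and is essentially the paper's own argument: the paper's proof of this lemma simply says it is analogous to Lemma~\ref{time-res}, and what you have written out is exactly the treatment of the term $I$ there (insertion of $\pm\mathcal{M}_K(\phi)$, the exact mean-value substitution valid because $\rho_h^0,\rho_h^1\in R_h$ are piecewise constant, then H\"older and Cauchy--Schwarz for sums with $\|\mathcal{M}_K(\phi)\|_{L^2(K)}\leq\|\phi\|_{L^2(K)}$), with the term $II$ correctly identified as absent at $n=0$.
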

    \begin{proof}
        The proof is analogous to before, see Lemma~\ref{time-res}.
    \end{proof}

    \begin{Lemma}
        \label{conv-res0}
        Let $\phi \in H^1(\mathbb{T}^d)$ with $\norm{\phi}_{H^1} \leq 1$. For $t \in [t^0,t^1]$, there holds
        \begin{align*}
            \int_\Omega \tilde{R}^C \phi \ dx \leq& \ \theta_C^{1} + \sum_{K \in \mathcal{T}_h} \theta_{K}^0 + \ell_1^0(t) \sum_{K \in \mathcal{T}_h} \int_{K} \left( \tilde{\rho}_a^1 + \tilde{\rho}_a^0 \right)\nabla c_{h,a}^0 \cdot \nabla  \phi \ dx - \norm{\tilde{\rho}_a^1 - \tilde{\rho}_a^0}_{L^\infty} \eta_{\Omega}^0,
        \end{align*}
        where
        \begin{align*}
            \theta_{K}^0 &:= C_{\text{ell}}\left(\norm{\tilde{\rho}_a^{1}}_{L^\infty} + \norm{\tilde{\rho}_a^{0} - \tilde{\rho}_a^{1}}_{L^\infty} \right)\norm{\tilde{\rho}_a^{0} - \tilde{\rho}_a^{1}}_{L^2(K)}.
        \end{align*}
    \end{Lemma}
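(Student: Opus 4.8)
The plan is to follow the proof of Lemma~\ref{conv-res} almost verbatim, the only genuine difference being that at $n=0$ there is no time level $t^{-1}$, so the residual carries a \emph{single} scheme flux $\mathcal{C}_h(\rho^0_h,c_h^0)$ with the full weight $\ell_0^0+\ell_1^0=1$ instead of two separately weighted fluxes. Accordingly I would first integrate the divergence term by parts globally (the homogeneous Neumann condition annihilates the boundary contribution) to write $\int_\Omega\tilde R^C\phi\,dx=-\int_\Omega(\ell_0^0\tilde\rho^1+\ell_1^0\tilde\rho^0)(\ell_0^0\nabla\tilde c^1+\ell_1^0\nabla\tilde c^0)\cdot\nabla\phi\,dx-\int_\Omega\mathcal{C}_h(\rho^0_h,c_h^0)\phi\,dx$, and then split it into a product-defect part $I$ and a scheme-matching part $II$ by inserting the single pivot $\tilde\rho^1\nabla\tilde c^0$ (weight one), which is the natural $n=0$ analogue of the pivot $\ell_0^n\tilde\rho^{n+1}\nabla\tilde c^n+\ell_1^n\tilde\rho^n\nabla\tilde c^{n-1}$ used there.

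For $I$ I would apply the algebraic identity (\ref{a1b1l0}) with $a_1=\tilde\rho^1,\ a_2=\tilde\rho^0,\ b_1=\nabla\tilde c^1,\ b_2=\nabla\tilde c^0$. Because the $n=0$ pivot keeps only the $a_1b_2$ term, the would-be partner $\tilde\rho^0\nabla\tilde c^{-1}$ being absent, the bookkeeping no longer closes into the symmetric diagonal $\ell_0 a_1 b_1+\ell_1 a_2 b_2$: besides the two terms $\ell_0^0\tilde\rho^1(\nabla\tilde c^0-\nabla\tilde c^1)$ and $(\tilde\rho^1-\tilde\rho^0)(\nabla\tilde c^1-\nabla\tilde c^0)(\tilde t^2-\tilde t)$, which after elliptic regularity, $|\tilde t^2-\tilde t|\leq1$ and $\norm{\nabla\phi}_{L^2}\leq1$ sum to $\sum_K\theta_K^0$, there survives an extra contribution $\ell_1^0\int_\Omega(\tilde\rho^1-\tilde\rho^0)\nabla\tilde c^0\cdot\nabla\phi\,dx$. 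Replacing the non-computable $\nabla\tilde c^0$ by the finite-element gradient $\nabla c_h^0$ and estimating the remainder through $\norm{\tilde c^0-c_h^0}_{H^1}\leq\eta_\Omega^0$ produces the explicit computable term $\ell_1^0\sum_K\int_K(\tilde\rho^1-\tilde\rho^0)\nabla c_h^0\cdot\nabla\phi\,dx$ together with the $\norm{\tilde\rho^1-\tilde\rho^0}_{L^\infty}\eta_\Omega^0$ contribution in the statement. This is the exact convective counterpart of the ``add and subtract $\nabla\tilde\rho^1$'' correction used in Lemma~\ref{diff-res0}.

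The part $II=-\int_\Omega\tilde\rho^1\nabla\tilde c^0\cdot\nabla\phi\,dx-\int_\Omega\mathcal{C}_h(\rho^0_h,c_h^0)\phi\,dx$ is then treated exactly as the $\ell_0^n$-branch of Lemma~\ref{conv-res}, now with weight one: I would split off $\tilde\rho^1(\nabla\tilde c^0-\nabla c_h^0)$ and bound it by $\norm{\tilde\rho^1}_{L^\infty}\eta_\Omega^0$, integrate the remaining $\tilde\rho^1\nabla c_h^0$ by parts on each intersected element to expose the convective element residual $\divergence(\tilde\rho^1\nabla c_h^0)$ tested against $\phi-\mathcal{M}_{K_T}(\phi)$, and combine the resulting element-boundary term with the scheme sum, regrouping over intersected faces into the dual-face jump term and the primal-face flux-defect term involving $\tilde\rho^1-\tfrac1{|F|}\mathcal{C}_F(\rho_h^0)$; with the trace and Poincar\'e--Wirtinger inequalities (\ref{Tr}), (\ref{BrH}) these are controlled by the two $c_R^\prime$-weighted square-root terms. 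Collecting everything reproduces precisely $\theta_C^1$, and adding the $I$-contributions yields the asserted bound.

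I expect the only real obstacle to be the index and sign bookkeeping caused by the broken two-level symmetry at $n=0$: one must verify that the leftover $\ell_1^0$-weighted term is genuinely the one displayed, in particular that the $\nabla\tilde c^0\to\nabla c_h^0$ replacement contributes exactly the stated $\eta_\Omega^0$ term and no spurious one, and that no boundary face is double-counted when the single scheme flux is redistributed over the intersected faces (recall $C_h=C_h^{F_h^{int}}\stackrel{.}{\cup}C_h^{S_h}$ with boundary faces only in $C_h^{S_h}$). All remaining estimates are literal transcriptions of the corresponding steps in the proof of Lemma~\ref{conv-res}.
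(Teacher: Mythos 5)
Your proposal is correct and follows essentially the same route as the paper: the paper's own proof is precisely the two-step sketch you carry out in detail — treat the $\ell_0^0(t)$-terms as in Lemma~\ref{conv-res}, and for the $\ell_1^0(t)$-terms add and subtract the Morley reconstruction $\tilde{\rho}^1$ (your ``weight-one pivot'' $\tilde{\rho}^1\nabla\tilde{c}^0$ is the same manipulation), then replace $\nabla\tilde{c}^0$ by $\nabla c_h^0$ at the cost of an $\eta_\Omega^0$-term. One remark: your derivation yields $+\norm{\tilde{\rho}^1-\tilde{\rho}^0}_{L^\infty}\eta_\Omega^0$, and this positive sign is the correct one — the minus sign in the stated bound appears to be a typo, since no step of either argument produces a quantity that may be subtracted.
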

    \begin{proof}
        The $\ell_0^0(t)$-terms are bounded as above, see Lemma~\ref{conv-res}. For the $\ell_1^0(t)$-terms, we first add and subtract the Morley reconstruction $\tilde{\rho}_a^1$ and then proceed as before.
    \end{proof}

    \noindent We again collect all the terms from the upper bounds from Lemmas~\ref{diff-res0}, \ref{time-res0} and \ref{conv-res0}, that still depend on $\phi$. The element-wise residual can be estimated as in (\ref{locres}). 
    For the extra terms at early times, using Hölder's inequality and the Cauchy-Schwarz inequality for sums, we find
    \begin{align}
        &\ell_1^0(t) \sum_{K \in \mathcal{T}_h} \int_{K} \big( ( \tilde{\rho}_a^1 - \tilde{\rho}_a^0 )\nabla c_{h,a}^0 - \nabla (\tilde{\rho}_a^{1} - \tilde{\rho}_a^{0}) \big) \cdot \nabla  \phi \ dx \notag \\
        & \quad\quad\quad\quad \leq \bigg(\sum_{K \in \mathcal{T}_h} \norm{ (\tilde{\rho}_a^1 - \tilde{\rho}_a^0 )\nabla c_{h,a}^0 - \nabla (\tilde{\rho}_a^{1} - \tilde{\rho}_a^{0})}_{L^2(K)}^2 \bigg)^{1/2} \norm{\phi}_{H^1}. \label{earlyextra}
    \end{align}

    \noindent Lastly, we observe
    \begin{Lemma}
        \label{alg-res}
        Let $\phi \in H^1(\mathbb{T}^d)$ with $\norm{\phi}_{H^1} \leq 1$. For $t \in [t^n,t^{n+1}]$ and $n = 0,1,\ldots,N_t-1$, there holds
        \begin{align*}
            \int_\Omega \tilde{R}^a \phi \ dx \leq \theta_\text{FV}^{n+1} + \theta_\text{FV}^n
        \end{align*}
        where $\theta_\text{FV}^n$ as in (\ref{FV-alg}) for $n=1, \ldots, \ldots,N_t-1$ and $\theta_\text{FV}^0 = 0$.
    \end{Lemma}
    \begin{proof}
        Hölder's inequality and (\ref{FV-alg}) lead to the desired upper bound.
    \end{proof}

    \subsection{A posteriori error estimators}
    \label{aposterirorierroresti}

    \noindent Finally, we obtain the following, central result
    \begin{Theorem}[A posteriori error estimators]
        \label{main_thm}
        Let $d \in \left\{2,3\right\}$. 
        Let $(\rho,c)$ be a weak solution of the Keller-Segel system (\ref{KS1}), (\ref{KS2}), (\ref{KScond}) on $[0,T_\text{max}]$ and let $(\tilde{\rho}_a,\tilde{c}_a)$ be the reconstruction, defined in Section~\ref{Morley}, of our numerical approximation $(\rho_{h,a},c_{h,a})$, defined in Subsection~\ref{alg-error}.
        Let $T > 0$.
        Then
        % \sum_{n=0}^{N_t-1} \int_{t^n}^{t^{n+1}} \norm{R_{\tilde{\rho}_a}}_{(H^1(\mathbb{T}^d))^\prime}^2 \ dt
        \begin{align}
            \label{estimator}
            \int_0^T \norm{R_{\tilde{\rho}_a}}_{(H^1(\mathbb{T}^d))^\prime}^2 \ dt \leq  \sum_{n=0}^{N_t-1}  \Delta t^n \bigg( \Theta_{a,n} + \sum_{K \in \mathcal{T}_h} \Theta_{K,n}^2\bigg)\bigg(1+\frac{C_7(h)\varepsilon}{1-C_7(h)\varepsilon}\bigg),
        \end{align}
        where $C_7(h)$ is the number of elementary operations needed to compute $ \Theta_{a,n} + \sum_{K \in \mathcal{T}_h} \Theta_{K,n}^2$.
        For $n=1, \ldots, N_t-1$, $\Theta_{a,n} + \sum_{K \in \mathcal{T}_h} \Theta_{K,n}^2$ is defined by summing the bounds of Lemmas~\ref{diff-res}, \ref{time-res}, \ref{conv-res} and \ref{alg-res} using (\ref{locres})
        and for $n=0$, $\Theta_{a,0} + \sum_{K \in \mathcal{T}_h} \Theta_{K,0}^2$ is defined by summing the bounds of Lemmas~\ref{diff-res0}, \ref{time-res0}, \ref{conv-res0} and \ref{alg-res} using (\ref{locres}) as well as (\ref{earlyextra}). %, once for the elementwise residual and once the terms at early times.
        If additionally,
        \begin{enumerate}
            \item[i)] for some $\delta > 1$ the condition $ \frac85 C_S^3 C_\text{ell}^2 \delta A E + \frac{864}{125}C_S^6 C_{\text{ell}}^4 \delta^2 A^2 E^2 < \frac{\delta -1}{\delta T E}$ is satisfied, where 
            \begin{align*}
                A &:= \norm{\rho(0,\cdot) - \tilde{\rho}_a(0,\cdot)}_{L^2}^2 + 12 \sum_{n=0}^{N_t-1}  \Delta t^n \bigg( \Theta_{a,n} + \sum_{K \in \mathcal{T}_h} \Theta_{K,n}^2\bigg) \\
                a(t) &:= 4 C_S^2 C_{\text{ell}}^2 \norm{\tilde{\rho}_a(t,\cdot)}_{L^3}^2 + 4 \big(\norm{q_{h,a}(t,\cdot)}_{L^\infty} + \eta_\infty(t)\big)^2+\frac{1}{8} \\
                E &:= \exp\left(\int_0^T a(t) \ dt \bigg( 1+ \frac{C_8(h,N_t) \varepsilon}{1 - C_8(h,N_t) \varepsilon} \bigg) \right),
            \end{align*}
            where $q_{h,a}(t,\cdot)$ and $\eta_\infty(t)$ are as in Remark~\ref{grad_c_up} with the right-hand side $\tilde{\rho}_a(t,\cdot)$, for every $t \in [0,T]$, and where $C_8(h,N_t) > 0$ is the number of elementary operations needed to compute $\int_0^T a(t) \ dt$, we obtain the a posteriori error estimator
            \begin{align*}
                 &\sup_{t \in [0,\min\{T,T_\text{max}\}]}\norm{\rho(t,\cdot) - \tilde{\rho}_a(t,\cdot)}_{L^2}^2  \leq \delta A E.
            \end{align*}
            In particular, $(\rho, c)$ is a weak solution up to time $T < T_\text{max}$.
            \item[ii)] the function $\Xi_n(\delta) := \Delta t^n \left( \frac85 C_S^3 C_\text{ell}^2 \delta A_n E_n + \frac{864}{125}C_S^6 C_{\text{ell}}^4 \delta^2 A_n^2 E_n^2\right) - \log(\delta)$ has a root $\delta_n > 1$, for each $n = 0, \ldots, N_t-1$, where $A_n$ and $E_n$ are iteratively defined as 
            % $B_1 := \frac45 C_S^3 C_\text{ell}^2$ and $B_2 := \frac{864}{125}C_S^6 C_{\text{ell}}^4$
            \begin{align*}
                A_0 &:= \norm{\rho(0,\cdot) - \tilde{\rho}_a(0,\cdot)}_{L^2}^2 + 12\Delta t^0 \bigg(\Theta_{a,0} + \sum_{K \in \mathcal{T}_h} \Theta_{K,0}^2\bigg), \\
                A_{n+1} &:= \Psi_n + 12\Delta t^{n+1} \bigg(\Theta_{a,n} + \sum_{K \in \mathcal{T}_h} \Theta_{K,n+1}^2\bigg), \quad \text{for } n=0, \ldots, N_t-2, \\
                E_n &:= \exp\bigg(\int_{t^n}^{t^{n+1}} a(t) ds \bigg( 1+ \frac{C_9(h) \varepsilon}{1 - C_9(h) \varepsilon} \bigg)\bigg), \\[10pt]
                \Psi_n &:= \delta_n A_n E_n,
            \end{align*}
            with $a(t)$ as in i),
            where $q_{h,a}(t,\cdot)$ and $\eta_\infty(t)$ are as in Remark~\ref{grad_c_up} with the right-hand side $\tilde{\rho}_a(t,\cdot)$, for every $t \in [0,T]$, and where $C_9(h) > 0$ is the number of elementary operations needed to compute the integral in $E_n$, we obtain the a posteriori error estimator
            \begin{align*}
                \sup_{t \in [0,\min\{T,T_\text{max}\}]}\norm{\rho(t,\cdot) - \tilde{\rho}_a(t,\cdot)}_{L^2}^2  \leq \Psi_{N_t-1}.
            \end{align*}
            In particular, $(\rho, c)$ is a weak solution up to time $T < T_\text{max}$.
        \end{enumerate}
    \end{Theorem}
    \begin{proof}
        We combine the stability frameworks in Theorem~\ref{stab_thm} and  Theorem~\ref{stab_cont} with the \emph{a posteriori} residual estimate (\ref{estimator}) to obtain $i)$ and $ii)$, respectively. 
        Additionally, we employ the computable upper bound of $\norm{\nabla \tilde{c}_a(t,\cdot)}_{L^\infty}$ from Remark~\ref{grad_c_up}.
        Lastly, we invoke Proposition \ref{apostverif}.
    \end{proof}

    \begin{Remark}
        A closed form representation of the estimator $\Theta_{a,n} + \sum_{K \in \mathcal{T}_h} \Theta_{K,n}^2$, for $n = 0,1,\ldots,N_t -1$, can be found in Appendix~\ref{closedform}.
    \end{Remark}
    
    \noindent This result states two fully computable conditional \emph{a posteriori} error estimators, which can be used in the context of \emph{a posteriori} verifiable existence.
    They play the role of $\theta$ in Proposition~\ref{apostverif}.

    \noindent The values of $C_7(h), C_8(h,N_t)$ and $C_9(h)$ can be obtained from the implementation and are stated in Section~\ref{num_sim}.

\section{Numerical simulations}
\label{num_sim}

In our numerical simulations we compare the availability of the two different \emph{a posteriori} error estimators presented in Theorem~\ref{main_thm} and investigate the scaling behavior of the \emph{a posteriori} residual estimates from Subsection~\ref{residual_esti} using manufactured solutions.
Algorithm~\ref{FV-FE-algo} was implemented using Python 3.8.10.
The code which was used to compute the results below is available on git, see \cite{code_zenodo}. 

\noindent Our implementation leads to the following constants needed to evaluate the parts of the error estimator due to the algebraic error.
\begin{align*}
\begin{aligned}
  C_1(h) &= 42 N_{\mathcal{T}_h}, \\
  C_2(h) &= 15 \bigl(N_{\mathcal{T}_h} + N_{\mathcal{S}_{\mathcal{T}_h}}\bigr), \\
  C_3^a(h) &= 9\max_{F \in F_h}(\mathcal{D}_{K,F}(\rho^n_{h,a}))N_{\mathcal{T}_h}, \\
  C_3^b(h) &= 6\max_{F \in F_h}(|F| (\nabla q_0 \cdot n_{K,F}))N_{\mathcal{T}_h}, \\
  C_4 &= 21 \\ %N_{\mathcal{S}_{\mathcal{T}_h}}, \\
\end{aligned}
\quad
\begin{aligned}
  C_5(h) &= 33 N_{\mathcal{S}_{\mathcal{T}_h}}, \\
  C_6(h) &= 91 N_{\mathcal{S}_{\mathcal{T}_h}}, \\
  C_7(h) &= 2N_{\mathcal{T}_h} + 3543, \\
  C_8(h,N_t) &= (936N_{\mathcal{T}_h} + 4)N_t + 6,
\end{aligned}
\quad
\begin{aligned}
  C_9(h) &= 936N_{\mathcal{T}_h} + 10, \\
  C_{10} &= 21, \\
  C_{11} &= 11 N_{\mathcal{T}_h}, \\
  C_{12} &= 91 N_{\mathcal{T}_h}.
\end{aligned}
\end{align*}

\subsection{Availability of the error estimates}

In this numerical test, we examine the time horizons in which the respective error estimates from Theorem~\ref{main_thm} are available, i.e. in which the conditions in $i)$ and $ii)$ of Theorem~\ref{main_thm} hold true.

\noindent As already mentioned, for $d=3$, we cannot investigate the availability of the error estimator in a meaningful manner, as there are only very rough upper bounds on especially the constant $C_S \approx 20.6585$ in the literature, see \cite[Table 6]{Mizuguchi2017}, which in combination with taking it to the sixth power and the exponential function yield that the conditions of the stability frameworks (\ref{stab_cond}) and (\ref{root}) do only hold for extremely short times and extremely fine meshes that would have exceeded the computing times available to us.
We hope that sharper estimates for these constant will become available in the future.
We thus investigate the time horizons of availability of the \emph{a posteriori} error estimates for $d=2$. 
In that case feasible bounds for the constant mentioned above are known on the flat torus $\mathbb{T}^2 \cong [0,1)^2$.
We may use $C_S = \big(1 + \frac{3\sqrt{2}}{2}\big)^{2/3} \approx 2.1358$, see \cite[Lemma 2.3]{Cai2012}.

% \noindent As our first example, we consider the manufactured solution
% \begin{align}
%   \rho(t,x,y) &:= \frac{1}{1 + t} \cos(\pi x) \cos(\pi y) + 1,\label{manuf_rho2D} \\
%   c(t,x,y) &:= \cos(\pi x) \cos(\pi y)  + 1,\label{manuf_c2D}
% \end{align}
% that solves the system
% \begin{align*}
%   %\label{manuf_system}
%   \begin{cases}
%       \ \ \ \partial_t \rho + \divergence\left( \rho \nabla c \right) - \Delta \rho &= f \quad\quad\quad\quad \ \ \text{ in } (0,T) \times \Omega, \\
%       \quad \quad \quad \quad \quad \quad \quad c - \Delta c &= \rho + g \quad\quad\quad \text{ in } (0,T) \times \Omega, \\
%       \quad \quad \quad \quad \quad \quad \quad \ \nabla \rho \cdot n &= 0 \quad\quad\quad\quad \ \  \text{ on } (0,T) \times \partial \Omega, \\
%       \quad \quad \quad \quad \quad \quad \quad \ \nabla c \cdot n &= 0 \quad\quad\quad\quad \ \  \text{ on } (0,T) \times \partial \Omega, \\
%       \quad \quad \quad \quad \quad \quad \quad \ \rho(0,\cdot,\cdot) &= \rho_0 \quad\quad\quad\quad \ \text{ in } \Omega,
%   \end{cases}
% \end{align*}
% where we choose $f : (0,T) \times \Omega \rightarrow \mathbb{R}$ and $g : (0,T) \times \Omega \rightarrow \mathbb{R}$ appropriately.

\noindent As the initial datum we choose
\begin{align}
  \label{stab:rho0}
  \rho_0(x,y) =  \cos(2\pi x)\cos(2\pi y)+1.
\end{align}

\noindent As a space discretization we choose a well-centered triangular mesh with mesh size $h = 2^{-(i+1)}$, for $i = 4,5,6,7$. 
A sufficient condition for well-centeredness for $d=2$ is that all angles of all elements are acute.
We discretize in time using equidistant time steps with $\Delta t = 2 \cdot 10^{-5} \cdot 2^{-i}$ for $i=0,1,2,3$.
We choose the parameter $\delta = \frac{8}{5}$, as it leads to the best results in numerical experiments for our test case.

% The framework in \autoref{aposterirorierror} allows adaptive time stepping, which we exploit here for efficiency. %Thus using a CFL condition that one could get from a positivity analysis of the numerical solution, would be covered by this analysis.
% A heuristic CFL condition, motivated by the one stated in \cite{Kolbe2023}, reads as
% \begin{align*}
%     \Delta t^n \leq \Delta x^n := \min_{K \in \mathcal{T}_h}\frac{|K|}{a_K^n}, \quad \text{where }  a_K^n := \sum_{F \in F_K} |\nabla c_h^{n} \cdot n_{K,F}| |F|.
% \end{align*}
% In our simulations we choose 
% \begin{align*}
%   \Delta t^n :=
%   \begin{cases}
%       \frac{\Delta x^n}{8}, &\quad \text{if } \frac{\Delta x^n}{8} < \frac{T}{200},  \\
%       \frac{T}{200}, &\quad \text{else}.
%   \end{cases}
% \end{align*}

\begin{figure}[h]
  \centering
  \begin{tabulary}{16cm}{ | C | C || C | C |} 
      \hline     
      $h$ & $\Delta t$ &Generalized Gronwall, sec. \ref{gen_gronwall} & Local-in-time continuation, sec. \ref{cont_arg}  \\ 
      \hline \hline
      $2^{-5}$ & 2.00e-05 & 2.00e-05 & 4.00e-05  \\ 
      \hline
      $2^{-6}$ & 1.00e-05 & 9.00e-05 & 2.00e-04 \\ 
      \hline
      $2^{-7}$ & 5.00e-06 & 2.30e-04 & 5.20e-04 \\ 
      \hline
      $2^{-8}$ & 2.50e-06 & 5.70e-04 & 1.32e-03 \\ 
      \hline
  \end{tabulary}
  \captionof{table}{Comparison of the maximal time of availability of the \emph{a posteriori} error estimators presented in Theorem \ref{main_thm}, in the case of the initial value (\ref{stab:rho0}) for $d = 2$.}
  \label{stab_comp}
\end{figure}

\noindent We observe in Table \ref{stab_comp} that the \emph{a posteriori} error estimator based on the local-in-time continuation argument performs better than the one based on the (Adjusted) Generalized Gronwall Lemma, see Proposition \ref{adj_gengronwall}, and its underlying global-in-time continuation.
\emph{A posteriori} error estimators based on the Generalized Gronwall Lemma \cite[Proposition 6.2]{Bartels2015nonlinear} perform even worse.

\subsection{Scaling behavior of the residual estimate}
\label{scaling_res}

In this numerical test, we examine the order of convergence of the residual estimator (\ref{estimator}).
We work on $\mathbb{T}^d \cong  [0,1)^d$ for $d\in \{2,3\}$. 
For $d=3$, we use the well-centered tetrahedral mesh of the infinite slab, proposed in \cite{Hirani2008}, with parameters $a = b = \frac34$, which is readily available. 

\noindent As the faces of the infinite slab match up with each other, mesh refinements are possible by decomposing the unit cube into smaller copies of the unit cube and discretizing these, appropriately scaled. 
We use decompositions with $3^3$, $4^3$, $5^3$, $6^3$ and $7^3$ copies.  
Note that this refinement strategy does not half the mesh size $h$ per refinement but reduces it by some constant in the interval $(0.5,1)$.

% \label{scaling}
% We are interested in the scaling behavior, i.e. the order of convergence, of our a posteriori error estimator and wish to compare it to the scaling behavior of the actual error. 

% An important tool that allows us to approximate the order of convergence of computed quantities, in our case the different parts of the error estimator, is the estimated order of convergence (EOC).
% \begin{Definition}[EOC, see Definition 8.1 in \cite{Kwon2023}]
% Given a sequence $(a_i)_{i\in \N}$ and $(h_i)_{i\in \N} \searrow 0$ for $i \rightarrow \infty$, we define the EOC via the local slope of $a_i$ versus $h_i$, in logarithmic scales, i.e.
% \begin{align*}
%     EOC_i(a,h) := \frac{\log\left( a_{i+1}/a_i \right)}{\log\left( h_{i+1}/h_i \right)}.
% \end{align*}
% \end{Definition}
% We will study the EOC of our a posteriori error estimator from above, see \autoref{apostierroresti}.

\noindent For $d=3$, we consider the manufactured solution
\begin{align}
  \rho(t,x,y,z) &:= \frac{1}{1 + t} \cos(2\pi x) \cos(2\pi y) \cos(2\pi z) + 1, \label{manuf_rho3d}\\ 
  c(t,x,y,z) &:= \cos(2\pi x) \cos(2\pi y) \cos(2\pi z) + 1, \label{manuf_c3d}
\end{align}
for $d=2$,
\begin{align}
  \rho(t,x,y) &:= \frac{1}{1 + t} \cos(2\pi x) \cos(2\pi y) + 1, \label{manuf_rho2d}\\ 
  c(t,x,y) &:= \cos(2\pi x) \cos(2\pi y) + 1, \label{manuf_c2d}
\end{align}
that solve the system
\begin{align*}
  %\label{manuf_system}
  \begin{cases}
      \ \ \ \partial_t \rho + \divergence\left( \rho \nabla c \right) - \Delta \rho &= f \quad\quad\quad\quad \ \ \text{ in } (0,T) \times \mathbb{T}^d, \\
      \quad \quad \quad \quad \quad \quad \quad c - \Delta c &= \rho + g \quad\quad\quad \text{ in } (0,T) \times \mathbb{T}^d, \\
      \quad \quad \quad \quad \quad \quad \quad \ \rho(0,\cdot) &= \rho_0 \quad\quad\quad\quad \ \text{ in } \mathbb{T}^d,
  \end{cases}
\end{align*}
with appropriately chosen $f : (0,T) \times \mathbb{T}^d \rightarrow \mathbb{R}$ and $g : (0,T) \times \mathbb{T}^d \rightarrow \mathbb{R}$.

\noindent We compute the 'exact' error of the interpolation of Morley-type of the numerical approximations, computed using Algorithm~\ref{FV-FE-algo}, to the manufactured solution above measured in the $L^\infty(0,T;L^2)$-norm and the $L^2(0,T;H^1)$-norm for different refinements of our mesh, as explained above.
Additionally, we investigate the scaling behavior of the (global) residual estimate (\ref{estimator}), i.e.
\begin{align*}
  \Theta_\Omega := \left(\sum_{n=0}^{N_t-1}  \Delta t^n \bigg( \Theta_{a,n} + \sum_{K \in \mathcal{T}_h}  \Theta_{K,n}^2\bigg) \right)^{1/2},
\end{align*}
on the same meshes.
We use the estimated order of convergence (EOC) to approximate the orders of convergence of our computed quantities.
% \begin{Definition}[EOC, see Definition 8.1 in \cite{Kwon2023}]
% Given a sequence $(a_i)_{i\in \N}$ and $(h_i)_{i\in \N} \searrow 0$ for $i \rightarrow \infty$, we define the EOC via the local slope of $a_i$ versus $h_i$, in logarithmic scales, i.e.
% \begin{align*}
%     EOC_i(a,h) := \frac{\log\left( a_{i+1}/a_i \right)}{\log\left( h_{i+1}/h_i \right)}.
% \end{align*}
% \end{Definition}
The resulting values, EOCs, mesh sizes and time step sizes are displayed in Table~\ref{scaling3D} and Table~\ref{scaling2D}.

\begin{figure}[h]
  \centering
  \begin{tabulary}{15.5cm}{ | C | C || C  C ||  C C || C C |} 
      \hline     
      $h$ & $\Delta t$ & $L^\infty(0,T;L^2)$ & EOC  & $L^2(0,T;H^1)$ & EOC & $\Theta_\Omega $ & EOC \\ 
      \hline \hline
      % $ 0.35355 $ & $T/8$   & 2.40e-01 & 1.21 & 2.36e-00 & 0.89 & 7.91e+02 & 0.42 \\ 
      % \hline
      % 1.76e-01 &  $T/16$  & 1.10e-01 & 1.64 & 1.28e-00 & 1.14 & 5.91e+02 & 0.78\\ 
      % \hline
      1.18e-01&  $T/32$ & 5.68e-02   & 1.78 & 8.10e-01 & 1.12  & 4.31e+02 & 0.88 \\ 
      \hline
      8.84e-02 & $T/64$  & 3.40e-02 & 1.86 & 5.86e-01 & 1.09 & 3.34e+02 & 0.93 \\ 
      \hline
      7.07e-02 & $T/128$  & 2.26e-02 & 1.90 & 4.60e-01 & 1.07 & 2.72e+02 & 0.95 \\ 
      \hline
      5.89e-02 & $T/256$  & 1.59e-02 & 1.93 & 3.78-01 & 1.05 & 2.29e+02 & 0.97 \\ 
      \hline  
      5.05e-02 & $T/512$  & 1.12e-02 & -     & 3.22e-01 & -  & 1.97e+02 & - \\ 
      \hline
 
  \end{tabulary}
  \captionof{table}{Scaling behavior of the 'exact' error in the $L^\infty(0,T;L^2)$- and $L^2(0,T;H^1)$-norm in the case of the manufactured solution (\ref{manuf_rho3d}), (\ref{manuf_c3d}) as well as the corresponding \emph{a posteriori} residual estimator $\Theta_\Omega$, for $d = 3$.}
  \label{scaling3D}
\end{figure}

\begin{figure}[h]
  \centering
  \begin{tabulary}{15.5cm}{ | C | C || C  C ||  C C || C C |} 
      \hline     
      $h$ & $\Delta t$ & $L^\infty(0,T;L^2)$ & EOC  & $L^2(0,T;H^1)$ & EOC & $\Theta_\Omega $ & EOC \\ 
      \hline \hline
      % $ 0.35355 $ & $T/8$   & 2.40e-01 & 1.21 & 2.36e-00 & 0.89 & 8.08e+02 & 0.45 \\ 
      % \hline
      2.50e-01 &  $T/16$  & 1.93e-01 & 1.72 & 1.90e+00 & 1.01 & 3.39e+02 & 0.85\\ 
      \hline
      1.25e-01&  $T/32$ & 5.89e-02   & 1.91 & 9.44e-01 & 1.01  & 1.88e+02 & 0.91 \\ 
      \hline
      6.125e-02 & $T/64$  & 1.57e-02 & 1.94 & 4.68e-01 & 1.00 & 9.97+e01 & 0.99 \\ 
      \hline
      3.125e-02 & $T/128$  & 4.08e-03 & 1.98 & 2.34e-01 & 1.00 & 5.02e+01 & 1.00 \\ 
      \hline
      1.56e-02 & $T/512$  & 1.03e-03 & - & 1.17e-01 & - & 2.51e+01 & - \\ 
      \hline  
      % 7.81e-03 & $T/3072$  & 2.59e-04 & -    & 4.72e-02 & -  & xxx & - \\ 
      % \hline
      % 3.91e-03 & $T/4048$  & xxx & -     & xxx & -  & xxx & - \\ 
      % \hline
      % 1.95e-03 & $T/256$  & xxx & -     & xxx & -  & xxx & - \\ 
      % \hline
 
  \end{tabulary}
  \captionof{table}{Scaling behavior of the 'exact' error in the $L^\infty(0,T;L^2)$- and $L^2(0,T;H^1)$-norm in the case of the manufactured solution (\ref{manuf_rho2d}), (\ref{manuf_c2d}) as well as the corresponding \emph{a posteriori} residual estimator $\Theta_\Omega$, for $d = 2$.}
  \label{scaling2D}
\end{figure}

\noindent As one would expect, the $L^\infty(0,T;L^2)$-error scales with approximately order 2 and the $L^2(0,T;H^1)$-error with approximately order 1. 
The \emph{a posteriori} residual estimate $\Theta_\Omega$ also scales with approximately order 1, which matches the order of the error, measured in the $L^\infty(0,T;L^2) \cap L^2(0,T;H^1)$-norm, that the stability techniques we use are capable of bounding, see \cite{Kolbe2023,Kwon2023}.
As we only provide a bound for the $L^\infty(0,T;L^2)$-error one could wish for an error estimator that scales with order 2 in space. 
For linear parabolic equations, there are techniques achieving this, which involves defining an "elliptic reconstruction", see \cite{Lakkis2015}.
Applying these techniques to the Keller-Segel system poses many challenges, such as choosing a suitable reconstruction for $\rho$ and performing residual estimates as in Subsection~\ref{residual_esti} for such quite possibly non-computable reconstructions.
This is beyond the scope of this paper.

% The Table~\ref{eoc-LinfL2} depicts the $L^\infty(L^2)$-error of the Morley interpolation computed from our numerical scheme. 
% The EOC in space is in average 3.68 and also appears to stabilize in this region. 
% The EOC subject to the scaling in time appears to be around 1.4 to 1.5, before decreasing and tending to zero. 
% This is a typical behavior when the spatial error dominates but the scheme is only refined in time.

% %\include{parabolic-parabolic_stability.tex}
% \include{outlook.tex}

% \include{überdiesedatei.tex}
% \include{verwendung.tex}
\paragraph{Acknowledgements:} The authors thank Sebastian Franz for helpful discussions on $L^\infty$ error estimates for elliptic problems.
The authors are grateful for valuable feedback by the reviewers which contributed significantly to the quality of this manuscript.
The research of J.G. was supported by Deutsche Forschungsgemeinschaft (DFG, German Research Foundation) - SPP 2410 Hyperbolic Balance Laws in Fluid Mechanics: Complexity, Scales, Randomness (CoScaRa), within the Project “A posteriori error estimators for statistical solutions of barotropic Navier-Stokes equations” 525877563. The work of J.G. is also supported by the Graduate School CE within Computational Engineering at Technische Universität Darmstadt.

\appendix
\section{Appendix}
\subsection{Pointwise \emph{a posterori} error estimates on the flat torus}
\label{A:Linf}
Let $d \in \{2,3\}$. We consider the elliptic and vector-valued problem 
\begin{align}
    \label{Linf:pb}
    q - \Delta q  = f, \quad \text{in } \mathbb{T}^d,
\end{align}
where $f \in L^2(\mathbb{T}^d,\mathbb{R}^d)$ such that $q \in L^\infty(\mathbb{T}^d,\mathbb{R}^d)$, since $H^2(\mathbb{T}^d,\mathbb{R}^d) \hookrightarrow L^\infty(\mathbb{T}^d,\mathbb{R}^d)$, for $d \in \{2,3\}$.
We are interested in pointwise error estimates for this problem.
As we work on the flat torus, the components of our vector-valued solution decouple.
Thus, we may first consider pointwise error estimates for a scalar-valued elliptic problem and in a second step relate this to the vector-valued problem.
Consider (\ref{Linf:pb}), where $f:\mathbb{T}^d \to \mathbb{R}$ is sufficiently regular.
\noindent In order to derive a fully explicit $L^\infty$-error estimator, we follow the strategy of \cite{Demlow2023}.
Let $G_d = G_d(x,\cdot)$ be the scalar-valued Green's function that, for each fixed $x \in \mathbb{T}^d$, satisfies
\begin{align}
    \label{Linf:Green}
    G_d - \Delta G_d = \delta_x(\cdot), \quad \text{in } \mathbb{T}^d,
\end{align}
where $\delta_x(\cdot)$ is the $d$-dimensional Dirac $\delta$-distribution centered in $x$. 
\noindent We need several explicit upper bounds for $G_d$.
As a preparation, we find the following auxiliary result, on tail bounds for certain series.
\begin{Lemma}
    \label{Lemma:series_esti}
    Let $d\in \{2,3\}$, $N \in \mathbb{N}$ and $f:[N-1,\infty) \to [0,\infty)$ be integrable with finite moments up to $d-1$, non-increasing and non-negative.
    Then
    \begin{align*}
        T_N := \sum_{m \in \mathbb{Z}^d : |m| > N} f(|m|).
    \end{align*}
    satisfies
    \begin{align*}
        T_N \leq \begin{cases}
        \int_{N-1}^\infty (6\pi x+ 3\pi)f(x) dx, \quad &d=2, \\
        \int_{N-1}^\infty 12\pi (x^2+ x + 1)f(x) dx, \quad &d=3.
        \end{cases}
    \end{align*}
\end{Lemma}
\begin{proof}
    As $f$ is non-increasing, we have 
    \begin{align*}
        T_N = \sum_{n=N}^\infty \sum_{n < |m| \leq n+1} f(|m|) \leq \sum_{n=N}^\infty |A_m| f(n),
    \end{align*}
    where $A_n := \{m \in \mathbb{Z}^d : n < |m| \leq n+1\}$ and $|n|$ is its cardinality, i.e. the number of $m \in \mathbb{Z}^d$ with $ n < |m| \leq n+1$.
    In order to find an upper bound for this value, we observe that identifying $m$ with $Q_m := m+[-\frac12,\frac12]^d$, we have
    \begin{align*}
        |A_n| =  \left| \bigcup_{m \in A_n} Q_m \right|_d,
    \end{align*}
    as the $Q_m$ are pairwise disjoint and satisfy $|Q_m|_d = 1$.
    Then, due to 
    \begin{align*}
        \bigcup_{m \in A_n} Q_m \subset B_{n+2}(0) \setminus B_{n-1}(0),
    \end{align*}
    we find
    \begin{align*}
        |A_n| \leq \left|B_{n+2}(0) \setminus B_{n-1}(0)\right|_d =  \left|B_{n+2}(0)\right|_d -  \left|B_{n-1}(0)\right|_d  = \begin{cases}
            6\pi n+ 3\pi, \quad &d=2, \\
            12\pi (n^2+ n + 1) \quad &d=3.
        \end{cases}
    \end{align*}
    Furthermore, again as $f$ is non-increasing, we find
    \begin{align*}
        f(n) \leq \int_{n-1}^{n}f(x)dx \quad \Longrightarrow \quad \sum_{n=N}^\infty f(n) \leq \int_{N-1}^{\infty}f(x)dx.
    \end{align*}
\end{proof}
\noindent These estimates serve as error estimators for evaluating series of the type 
\begin{align}
    \label{series_esti}
    \sum_{m \in \mathbb{Z}^d \setminus \{0\}} f(|m|) = \underbrace{\sum_{m \in \mathbb{Z}^d\setminus \{0\} : |m| \leq N} f(|m|)}_{\text{numerical approximation}} + \underbrace{\sum_{m \in \mathbb{Z}^d : |m| > N} f(|m|)}_{\text{Lemma~\ref{Lemma:series_esti}}}.
\end{align}
numerically via partial sums.
The desired Green's function bounds read as
\begin{Lemma}
    \label{Linf:Gbounds}
    The Green's function $G_d$, as defined in (\ref{Linf:Green}), satisfies
    \begin{enumerate}[i)]
        \item $\| G_d(x,\cdot)\|_{L^1} \leq C_1 := 1 $
        \item $\| G_d(x,\cdot)\|_{L^2} \leq \tilde{C}_1 := \begin{cases}
            \frac{1}{2\sqrt{\pi}}, &\text{if } d=2, \\
            \frac{1}{2\sqrt{2\pi}}, &\text{if } d=3.
        \end{cases}$
        \item $\| \nabla G_d(x,\cdot)\|_{L^1} \leq C_2 := \begin{cases}
            \frac{\pi}{2}, &\text{if } d=2, \\
            2, &\text{if } d=3.
        \end{cases}$
    \end{enumerate}
    For $0 < r < \frac12$, the Green's function $G_d$ additionally satisfies
    \begin{enumerate}[i)]
        \setcounter{enumi}{3}
        \item $\| \nabla G_d(x,\cdot)  \|_{L^1(B_r(x))} \leq C_3(r) := \begin{cases}
            r + 16 r^2, &\text{if } d=2, \\
            2-(r+2)e^{-r} + 56 r^3, &\text{if } d=3.
        \end{cases}
        $
        \item $ \| D^2 G_d(x,\cdot) \|_{L^1( \mathbb{T}^d \setminus B_r(x))} \leq C_4(r) := \begin{cases}
            \big(\log\big(\frac{1}{\sqrt{2}r}\big) + r K_1(r) - \frac{\sqrt{2}}{2} K_1\big(\frac{\sqrt{2}}{2}\big)\big) + 11, &\text{if } d=2,\\
            2 \log\big(\frac{\sqrt{3}}{2r}\big) + \big(\frac{\sqrt{3}}{2} - r\big)\big( \frac{\sqrt{3}/2+r+4}{2} \big)e^{-r} + 415, &\text{if } d=3.
        \end{cases}$
    \end{enumerate}
    \begin{proof}
        As preparation, we call $g_d$ the Green's function that solves $(1 - \Delta) g_d = \delta_0 \text{ in } \mathbb{R}^d$.
        We use the representation via the modified Bessel functions of second kind $K_0, K_1$ and $K_2$, see \cite[Section 10]{NIST2020}, i.e.
        \begin{align*}
            g_2(x) = \frac{1}{2 \pi} K_0(|x|) \quad \text{and} \quad g_3(x) =  \frac{e^{-|x|}}{4 \pi|x|}.
        \end{align*}
        The functions $G_d$ and $g_d$ relate via periodization, i.e. $G_d(x,z) = \sum_{m \in \mathbb{Z}^d} g_d(x-z+m)$, which allows us to estimate
        \begin{align*}
            \| G_d(x,\cdot)\|_{L^1(\mathbb{T}^d)}  \leq \sum_{m \in \mathbb{Z}^d} \int_{\mathbb{T}^d} |g_d(x-z+m)| dz = \sum_{m \in \mathbb{Z}^d} \int_{x-m-\mathbb{T}^d} |g_d(y) | dy \leq \int_{\mathbb{R}^d} |g_d(y) | dy,
        \end{align*}
        which is the main structure that enables us to show the claimed upper bounds.
        Note that this works for any periodized function, e.g. derivatives of $G_d$.
        \begin{enumerate}[i)]
            \item We observe, via $K_0(s) = \int_0^\infty e^{-s \cosh(t)} dt$, that
            \begin{align*}
                \int_{\mathbb{R}^2} \frac{1}{2 \pi} K_0(|y|) dy = \int_0^\infty s K_0(s) ds &= \int_0^\infty\int_0^\infty s e^{-s \cosh(t)} ds dt 
                = \int_0^\infty \frac{1}{\cosh^2(t)} dt = 1
            \end{align*}
            and
            \begin{align*}
                \int_{\mathbb{R}^3}\frac{e^{-|y|}}{4 \pi|y|} dy = \int_0^\infty s e^{-s} ds = 1.
            \end{align*}
            \item We obeserve
            \begin{align*}
                \int_{\mathbb{R}^2} \left(\frac{1}{2 \pi} K_0(|y|)\right)^2 dy = \frac{1}{2 \pi} \int_0^\infty s K_0(s)^2 ds &= \frac{1}{2 \pi}\int_0^\infty  \int_0^\infty  \int_0^\infty  s e^{-s (\cosh(t) + \cosh(r))} ds dt dr \\
                &= \frac{1}{2 \pi} \int_0^\infty \int_0^\infty \frac{1}{(\cosh(t) + \cosh(r))^2} dt dr =\frac{1}{4 \pi}
            \end{align*}
            and 
            \begin{align*}
                \int_{\mathbb{R}^3}\frac{e^{-2|y|}}{16 \pi^2|y|^2} dy = \frac{1}{4 \pi}\int_0^\infty s e^{-2s} ds =  \frac{1}{8 \pi}.
            \end{align*}
            \item We observe, via $K_1(s) = \int_0^\infty \cosh(t) e^{-s \cosh(t)} dt$, that
            \begin{align*}
                \int_{\mathbb{R}^2} | \nabla g_2(y) | dy = \int_0^\infty s K_1(s) ds = \int_0^\infty\cosh(t)\int_0^\infty s e^{-s \cosh(t)} ds dt =  \int_0^\infty \frac{1}{\cosh(t)} dt = \frac{\pi}{2}
            \end{align*}
            and
            \begin{align*}
                \int_{\mathbb{R}^3} | \nabla g_3(y) | dy = \int_0^\infty (1+s) e^{-s} ds = 2.
            \end{align*}
            \item Next, we need local bounds, i.e. for integrals over (small) balls. We again relate
            \begin{align*}
                \| \nabla G_d(x,\cdot)\|_{L^1(B_r(x))}  = \sum_{m \in \mathbb{Z}^d} \int_{B_r(x)} | \nabla g_d(x-z+m) | dz = \sum_{m \in \mathbb{Z}^d} \underbrace{\int_{B_r(m)} | \nabla g_d(y) | dy}_{ =:\ I_m}.
            \end{align*}
            We split the sum into the principal part $I_0$ and the tail $\sum_{m \in \mathbb{Z}^d\setminus \{0\} }I_m $, where $0 \in \mathbb{R}^d$ denotes the zero vector.
            We find
            \begin{align*}
                I_0 = \int_{B_r(0)} \frac{1}{2 \pi} K_1(|y|) dy \leq \int_0^r s K_1(s) ds \leq r,
            \end{align*}
            as $0 < s K_1(s) < 1$ by $\lim_{s \searrow 0} s K_1(s) = 1$ and monotonicity via $\frac{d}{ds} s K_1(s) = - s K_0(s) < 0$.
            Further, for $|m| \geq 1$ and $r < \frac12$, we find $|m| > 2r$ and thus $|z+m| > \frac{|m|}{2}$ for $z \in B_r(0)$.
            We use monotonicity of $K_1$ due to $\frac{d}{ds} K_1(s) = - \frac12(K_0(s) + K_2(s)) < 0$ and obtain
            \begin{align*}
                I_m = \int_{B_r(0)} \frac{1}{2 \pi} K_1(|y+m|) dy \leq \int_{B_r(0)} \frac{1}{2 \pi} K_1\left(\frac{|m|}{2}\right) dy = \frac12 K_1\left(\frac{|m|}{2}\right)r^2.
            \end{align*}
            Thus, dealing with $\sum_{m \in \mathbb{Z}^2\setminus \{0\} }\frac12 K_1\left(\frac{|m|}{2}\right)$ as in (\ref{series_esti}), gives
            \begin{align*}
                \| \nabla G_2(x,\cdot)\|_{L^1(B_r(x))} \leq r + \sum_{m \in \mathbb{Z}^2\setminus \{0\} } \frac12 K_1\left(\frac{|m|}{2}\right)r^2 \leq r + 16 r^2.
            \end{align*}
            The argument for $d = 3$ works similarly. We find
            \begin{align*}
                I_0 = \int_{B_r(0)} \frac{e^{-|y|}}{4 \pi} \left(  \frac{1}{|y|^2} +   \frac{1}{|y|} \right) dy \leq \int_0^r (1+s) e^{-s} ds = 2 - (r + 2)e^{-r}. 
            \end{align*}
            For the tail, we find for $|m| \geq 1$ again using $|z+m| > \frac{|m|}{2}$ for $z \in B_r(0)$, that
            \begin{align*}
                I_m = \int_{B_r(0)} \frac{e^{-|z+m|}}{4 \pi} \left(  \frac{1}{|z+m|^2} +   \frac{1}{|z+m|} \right) dz &\leq \int_{B_r(0)} \frac{e^{-\frac{|m|}{2}}}{4 \pi} \left( \frac{4}{|m|^2} +   \frac{2}{|m|} \right) dz \\
                 &\leq \frac{r^3}{3}\left(  \frac{4}{|m|^2} +   \frac{2}{|m|} \right)e^{-\frac{|m|}{2}}.
            \end{align*}
            Thus, using (\ref{series_esti}),
            \begin{align*}
                \| \nabla G_3(x,\cdot)\|_{L^1(B_r(x))} &\leq 2 - (r + 2)e^{-r} + \sum_{m \in \mathbb{Z}^3\setminus \{0\} } \frac{r^3}{3}\left(  \frac{4}{|m|^2} +   \frac{2}{|m|} \right)e^{-\frac{|m|}{2}} \\
                &\leq 2 - (r + 2)e^{-r} + 56 r^3.
            \end{align*}
            \item Lastly, we are interested in far-field bounds. Again, similar to before, we observe
            \begin{align*}
                \| D^2 G_d(x,\cdot)\|_{L^1(\mathbb{T}^d \setminus B_r(x))} &= \sum_{m \in \mathbb{Z}^d} \int_{\mathbb{T}^d \setminus B_r(x)} | D^2 g_d(x-z+m) | dz \\
                &= \sum_{m \in \mathbb{Z}^d} \underbrace{\int_{m + \mathbb{T}^d \setminus B_r(m+1)} | D^2 g_d(y)| dy}_{ =:\ I_m},
            \end{align*}
            where by the $1$ in $m+1 \in \mathbb{Z}^d$, we mean the vector consisting only of ones.
            We first compute the spectral norm of the Hessian of $K_0(|x|)$.
            For any radial function $g(|x|)$, we compute
            \begin{align*}
                D^2g(|x|) = g^{\prime \prime}(|x|) \frac{x x^\top}{|x|^2} + g^\prime(|x|)\bigg(\frac{1}{|x|} I -  \frac{x x^\top}{|x|^3}\bigg),
            \end{align*}
            where $I \in \R^{d \times d}$ is the identity matrix.
            For $d=2$, we consider the case $g(|x|) = K_0(|x|)$.
            Using the elementary identities
            \begin{align} 
                K_0^\prime(|x|) &= -K_1(|x|),\label{A:Kv1}\\
                K_0^{\prime \prime}(|x|) &= -K_1^\prime(|x|) = \frac12 (K_0(|x|) + K_2(|x|)) = K_0(|x|) + \frac{1}{|x|} K_1(|x|),\label{A:Kv2}% \\
                % \text{and} \quad K_2(|x|) &= K_0(|x|) + \frac{2}{|x|} K_1(|x|),
            \end{align}
            we find that
            \begin{align*}
                D^2 K_0(|x|) = K_0(|x|) \frac{x x^\top}{|x|^2} + 2K_1(|x|) \frac{x x^\top}{|x|^3} - K_1(|x|)\frac{I}{|x|}.
            \end{align*}
            Setting $u := \frac{x}{|x|}$ and choose any $v$ such that $u^\top v = 0$, we find 
            \begin{align*}
                D^2K_0(|x|) u = \bigg(K_0(|x|) + \frac{K_1(|x|)}{|x|}\bigg)u \quad \text{and} \quad  D^2f(x) v = - \frac{K_1(|x|)}{|x|} v.
            \end{align*}
            As $u$ and $v$ span the whole $\mathbb{R}^2$, these are all eigenvalues. Using symmetry of $D^2f(x)$, we find for the spectral norm
            \begin{align*}
                | D^2 K_0(|x|) | = \max\bigg\{\bigg|K_0(|x|) + \frac{K_1(|x|)}{|x|}\bigg|,\left|\frac{K_1(|x|)}{|x|}\right|\bigg\} = K_0(|x|) + \frac{K_1(|x|)}{|x|}.
            \end{align*}
            With this at hand and using periodicity, i.e. $\mathbb{T}^d \setminus B_{r}(1) = \mathbb{T}^d \setminus B_{r}(0)$, as well as $\mathbb{T}^d \cong [-\frac12,\frac12)^d$, we compute
            \begin{align*}
                I_0 \leq \frac{1}{2 \pi}\int_{B_{\frac{\sqrt{2}}{2}}(0) \setminus B_{r}(0)} K_0(|y|) + \frac{K_1(|y|)}{|y|} dy &=  \int_r^\frac{\sqrt{2}}{2} sK_0(s) + K_1(s) ds \\
                &\leq  \bigg(r K_1(r) - \frac{\sqrt{2}}{2} K_1\bigg(\frac{\sqrt{2}}{2}\bigg)\bigg) +  \int_r^\frac{\sqrt{2}}{2} K_1(s) ds.
            \end{align*}
            Using the elementary bound $K_1(r) \leq \frac{1}{r}$ for $0<r<1$, we further estimate
            \begin{align*}
                \int_r^\frac{\sqrt{2}}{2} K_1(s) ds \leq \int_r^\frac{\sqrt{2}}{2} \frac{1}{s} ds = \log\bigg(\frac{\sqrt{2}}{2}\bigg) - \log(r).
            \end{align*}
            All in all,
            \begin{align*}
                I_0 = \bigg(r K_1(r) - \frac{\sqrt{2}}{2} K_1\bigg(\frac{\sqrt{2}}{2}\bigg) + \log\bigg(\frac{\sqrt{2}}{2}\bigg) + \log\bigg(\frac{1}{r}\bigg)\bigg).
            \end{align*}
            Next, for $|m| \geq 1$ and $y \in \mathbb{T}^2 \cong [-\frac12,\frac12)^2$, we find $|y+m| > \big||m| - \frac{\sqrt{2}}{2} \big|$.
            Note that $|m| \geq 1$ means, we are away from the singularity of $g_2$, hence we may integrate over the full flat torus, i.e.
            \begin{align*}
                I_m \leq \int_{m + \mathbb{T}^2} |D^2 g_2(y)| dy &\leq  \frac{1}{2 \pi}\int_{\mathbb{T}^2} K_0(|y+m|) + \frac{K_1(|y+m|)}{|y+m|} dy \\
                &\leq \frac{1}{2 \pi} \bigg( K_0\bigg(\bigg||m| - \frac{\sqrt{2}}{2}\bigg|\bigg) + \frac{K_1\big(\big||m| - \frac{\sqrt{2}}{2}\big|\big)}{\big||m| - \frac{\sqrt{2}}{2}\big|} \bigg)=: \tilde{I}_m,
            \end{align*}
            where we used monotonicity of $f(|x|) := K_0(|x|) + \frac{K_1(|x|)}{|x|}$, i.e.
            \begin{align*}
                \frac{d}{d|x|} f(|x|) = - \bigg(K_1(|x|) + \frac{K_0(|x|)}{|x|} + \frac{2K_1(|x|)}{|x|^2} \bigg) < 0,
            \end{align*}
            which is due to (\ref{A:Kv1}) and (\ref{A:Kv2}).
            Therefore, dealing with $\sum_{m \in \mathbb{Z}^2 \setminus \{0\} }\tilde{I}_m$ as in (\ref{series_esti}), leads to
            \begin{align*}
                \| D^2 G_2(x,\cdot)\|_{L^1(\mathbb{T}^2 \setminus B_r(x))} %\leq 3 \ln\left(\frac{\sqrt{2}}{2r}\right) + (r + 4)e^{-r} - \left(\frac{\sqrt{2}}{2} + 4\right)e^{-\frac{\sqrt{2}}{2}} + \sum_{m \in \mathbb{Z}^d\setminus \{0\} } \frac{1}{2 \pi} \left( \frac{12}{|m|^2} + \frac{6}{|m|} + 1 \right) e^{-\frac{|m|}{2}} \
                \leq 2\pi \bigg(r K_1(r) - \frac{\sqrt{2}}{2} K_1\bigg(\frac{\sqrt{2}}{2}\bigg) + \log\bigg(\frac{\sqrt{2}}{2}\bigg) + \log\bigg(\frac{1}{r}\bigg)\bigg) + 11.
            \end{align*}
            Using the same arguments, for $d=3$, we have
            \begin{align*}
                I_0 \leq \int_{B_{\frac{\sqrt{3}}{2}}(0) \setminus B_{r}(0)} \frac{e^{-|y|}}{4 \pi}\bigg( \frac{1}{|y|} + \frac{2}{|y|^2} + \frac{2}{|y|^3} \bigg) dy &\leq \int_r^{\frac{\sqrt{3}}{2}} e^{-s} \bigg(s+2+\frac{2}{s}\bigg) ds \\
                    &\leq 2 \log\bigg(\frac{\sqrt{3}}{2r}\bigg) + \bigg(\frac{\sqrt{3}}{2} - r\bigg)\bigg( \frac{a+r+4}{2} \bigg)e^{-r}.
            \end{align*}
            For $|m| \geq 1$, using $|y+m| \geq \big||m| - \frac{\sqrt{3}}{2} \big|$, we find
            \begin{align*}
                I_m %&\leq \int_{\mathbb{T}^3} \frac{e^{-|y+m|}}{4 \pi}\left( \frac{1}{|y+m|} + \frac{4}{|y+m|^2} + \frac{4}{|y+m|^3} \right)\\
                \leq \frac{e^{-\big||m| - \frac{\sqrt{3}}{2} \big|}}{4 \pi}\bigg( \frac{1}{\big||m| - \frac{\sqrt{3}}{2} \big|} + \frac{2}{\big||m| - \frac{\sqrt{3}}{2} \big|^2} + \frac{2}{\big||m| - \frac{\sqrt{3}}{2} \big|^3} \bigg).
            \end{align*}
            All in all, we have
            \begin{align*}
                 \| D^2 G_3(x,\cdot)\|_{L^1(\mathbb{T}^3 \setminus B_r(x))} %&\leq 4 \ln\left(\frac{\sqrt{3}}{2r}\right) + (r+5)e^{-r} - (\frac{\sqrt{3}}{2} + 5)e^{-\frac{\sqrt{3}}{2}} + \sum_{m \in \mathbb{Z}^d\setminus \{0\} } \frac{e^{-\frac{|m|}{2}}}{4 \pi}\left( \frac{2}{|m|} + \frac{16}{|m|^2} + \frac{32}{|m|^3} \right) \\
                 \leq  2 \log\bigg(\frac{\sqrt{3}}{2r}\bigg) + \bigg(\frac{\sqrt{3}}{2} - r\bigg)\bigg( \frac{\sqrt{3}/2+r+4}{2} \bigg)e^{-r} + 415\ .
            \end{align*}
        \end{enumerate}
    \end{proof}
\end{Lemma}
\noindent Note that these bounds asymptotically, i.e. for $r \searrow 0$, correspond to \cite[(2.5)]{Demlow2023}.
\noindent In order to prepare the scalar-valued a posteriori error estimate, we are interested in pointwise representations of the solution to (\ref{Linf:pb}) as well as its numerical approximation.
Let $q_h \in P_1(\mathcal{S}_{\mathcal{T}_h}) \subset H^1 \cap C^0$ be a finite element approximation to the solution $q$ of (\ref{Linf:pb}), i.e.
\begin{align*}
    \langle \nabla q_h, \nabla v_h \rangle_{L^2} + \langle q_h, v_h\rangle_{L^2} = \langle f, v_h \rangle_{L^2}, \quad \text{for all } v_h \in P_1(\mathcal{S}_{\mathcal{T}_h}).
\end{align*}
Computing this approximation is again susceptible to round-off errors and algebraic errors. We introduce the perturbed system $\tilde{\mathbb{S}}_3 Q_a =: \tilde{H}_3^a$ and the corresponding approximation $q_{h,a}$ and the algebraic residual function $r_\text{FE}$, similar to Subsection~\ref{alg-error}.
Then,
\begin{align}
    \label{Linf:FEM}
    \langle \nabla q_{h,a}, \nabla v_h \rangle_{L^2} + \langle q_{h,a}, v_h\rangle_{L^2} = \langle f + r_\text{FE}, v_h \rangle_{L^2} , \quad \text{for all } v_h \in P_1(\mathcal{S}_{\mathcal{T}_h}).
\end{align}
Using the Green's function $G$, solving (\ref{Linf:Green}), we can represent the solution of (\ref{Linf:pb}) by 
\begin{align}
    \label{A:obs1}
    q(x) = \langle G(x,\cdot), f(\cdot) \rangle_{L^2},
\end{align}
and, any $v \in H^1 \cap C^0 $ by
\begin{align}
    \label{A:obs2}
    v(x) = \langle \nabla v, \nabla G(x,\cdot)\rangle_{L^2} + \langle v, G(x,\cdot) \rangle_{L^2}.
\end{align}
Using (\ref{A:obs1}) and (\ref{A:obs2}) with $v = q_{h,a}$, the error has the pointwise representation
\begin{align}
    \label{Linf:error_eq}
    (q_{h,a} - q)(x) = \langle \nabla q_{h,a}, \nabla G(x,\cdot)\rangle_{L^2} + \langle q_{h,a} - f, G(x,\cdot) \rangle_{L^2}.
\end{align}
Furthermore, we will need a stability estimate for the Scott-Zhang interpolant.
Let $G_h \in P_1(\mathcal{S}_{\mathcal{T}_h})$ be the Scott-Zhang interpolant of $G$. It satisfies the local stability and approximation property
\begin{align}
    \label{Linf:sz}
    | G - G_h |_{W^{k,1}(T)} \leq C_{k,j}^\text{sz} h_T^{j-k} | G |_{W^{j,1}(\omega_T)},
\end{align}
for $0\leq k \leq j \leq2$,
and 
\begin{align*}
    \| G_h \|_{L^2(T)} \leq \tilde{C}^\text{sz} \| G \|_{L^2(\omega_T)},
\end{align*}
where $\omega_T$ is the standard patch of all elements in $\mathcal{S}_{\mathcal{T}_h}$ touching $T$.
We additionally need the local trace inequality
\begin{align*}
    \| g \|_{L^1(F)} \leq \tilde{c}_\text{Tr} \left(\| \nabla g \|_{L^1(T)} + h_T^{-1} \| g \|_{L^1(T)}\right),
\end{align*}
for all faces $F \in S_h$ with $F \subset \partial K$ for $K \in \mathcal{S}_{\mathcal{T}_h}$, and functions $g \in H^1(\mathbb{T}^d)$.
These constants can all be determined explicitly by investigating the corresponding proofs of the estimates.
\begin{Theorem}[Scalar-valued error estimates]
    Let $d\in \{2,3\}$. Let $q \in L^\infty(\mathbb{T}^d)$ be the solution to (\ref{Linf:pb}) and $q_{h,a} \in P_1(\mathcal{S}_{\mathcal{T}_h})$ its perturbed P1 finite element approximation.
    Then
    \begin{align*}
        \| q_{h,a} - q\|_{L^\infty(\mathbb{T}^d)} \leq \max_{T \in \mathcal{S}_{\mathcal{T}_h}} \left\{  \alpha_h \|q_{h,a} - \Delta q_{h,a} -f \|_{L^\infty(T)} + \beta_h \| \llbracket \nabla q_{h,a} \rrbracket \|_{L^\infty(\partial T)} \right\} + \eta_{a} =: \eta_\infty^\text{sc}(q_{h,a}),
    \end{align*}
    where 
    \begin{align*}
        \alpha_h &:=  C_\text{ol} C_{0,2}^\text{sz} C_4(\underline{h}) h^2  + C_{0,1}^\text{sz} C_3(3h) h, \\
        \beta_h &:= \tilde{c}_\text{Tr} C_\text{ol} (C_{1,2}^\text{sz} + C_{0,2}^\text{sz}) C_4(\underline{h}) h  + (C_{1,1}^\text{sz} + C_{0,1}^\text{sz})C_3(3h) , \\
        \eta_a &:= C_\text{ol}\tilde{C}^\text{sz} \tilde{C}_1 \bigg(\frac{(1 - 2 C_{10}\varepsilon)\| \tilde{\mathbb{M}}^{-1} \|_2}{1 - (2 - \|\tilde{\mathbb{M}}\|_2\|\tilde{\mathbb{M}}^{-1}\|_2 )C_{10}\varepsilon}\bigg)^{1/2} \bigg(\frac{C_{11}(h) \varepsilon}{1-2C_{11}(h) \varepsilon} \| \tilde{H}_2\|_2 + \| \tilde{H}_2^a -\tilde{H}_2 \|_2 \\
        &\quad\quad\quad\quad\quad\quad\quad\quad\quad\quad\quad\quad\quad\quad\quad\quad\quad\quad\quad\quad\quad\quad\quad + \bigg(\frac{C_{12}(h)\varepsilon}{1-C_{12}(h)\varepsilon}\bigg) \|\tilde{\mathbb{S}}_2\|_2 \|C_a^{n}\|_2 \bigg),
    \end{align*}
    with $C_\text{ol} = d+2$ the maximal number of overlapping elements of two different patches and $\underline{h} := \min_{T \in \mathcal{S}_{\mathcal{T}_h}} \left\{ h_T \right\}$ the minimal mesh size,
    as well as $C_{10}(h), C_{11}(h), C_{12}(h) > 0$ are the numbers of elementary operations needed to compute the corresponding perturbed mass matrix $\tilde{\mathbb{M}}_3$, $\tilde{H}_3$ and $\tilde{\mathbb{S}}_3$, respectively.
\end{Theorem}
\begin{proof}
    Let us fix an arbitrary $x \in \mathbb{T}^d$ and the corresponding element $T_0 \in \mathcal{S}_{\mathcal{T}_h}$ with $x \in T_0$. 
    We set $g(\cdot) := g(x,\cdot) := G(x,\cdot) - G_h(x,\cdot)$.
    Adding (\ref{Linf:FEM}), tested with $G_h \in P_1(\mathcal{S}_{\mathcal{T}_h})$, to (\ref{Linf:error_eq}), leads to
    \begin{align*}
        (q_{h,a} - q)(x) &= \langle \nabla q_{h,a}, \nabla g\rangle_{L^2} + \langle q_{h,a} - f , g \rangle_{L^2} + \langle r_\text{FE},G_h \rangle_{L^2} \\
        &= \sum_{K \in \mathcal{S}_{\mathcal{T}_h}}\langle \nabla q_{h,a}, \nabla g\rangle_{L^2(K)} + \sum_{K \in \mathcal{S}_{\mathcal{T}_h}}\langle q_{h,a} - f , g \rangle_{L^2(K)} + \langle r_\text{FE},G_h \rangle_{L^2}\\
        &= \sum_{K \in \mathcal{S}_{\mathcal{T}_h}}\langle q_{h,a} - \Delta q_{h,a} - f, g \rangle_{L^2(K)} + \sum_{K \in \mathcal{S}_{\mathcal{T}_h}} \int_{\partial K} (\nabla q_{h,a} \cdot n_T) g d S(x) + \langle r_\text{FE},G_h \rangle_{L^2}\\
        % &=\sum_{T \in \mathcal{S}_{\mathcal{T}_h}}\| q_h - \Delta q_h - f \|_{L^\infty(T)} \| g \|_{L^1(T)} + \sum_{F \in S_h} \|  \llbracket \nabla q_h \rrbracket \|_{L^\infty(F)}\| g \|_{L^1(F)} \\
        & \leq \max_{K\in\mathcal{S}_{\mathcal{T}_h}}\{ \| q_{h,a} - \Delta q_{h,a} - f \|_{L^\infty(K)} \} \sum_{K \in \mathcal{S}_{\mathcal{T}_h}}\| g \|_{L^1(K)} + \max_{K\in\mathcal{S}_{\mathcal{T}_h}} \{\|  \llbracket \nabla q_{h,a} \rrbracket \|_{L^\infty(\partial K)} \sum_{F \in S_h} \| g \|_{L^1(F)}\} \\
        &\quad\quad\quad + C_\text{ol}\tilde{C}^\text{sz} \tilde{C}_1 \| r_\text{FE}\|_{L^2}
    \end{align*}
    Furthermore, using (\ref{Linf:sz}), we find
    \begin{align*}
        \| g \|_{L^1(K)} \leq C_{0,2}^\text{sz} h_K^2 \| D^2 G\|_{L^1(\omega_K)},
    \end{align*}
    for all $K \in \mathcal{S}_{\mathcal{T}_h}$ with $\omega_K \cap T_0 = \emptyset$, as $D^2G(x,\cdot) \notin L^1_\text{loc}$ due to its singularity in $x \in T_0$.
    In order to obtain a finite upper bound, we additionally use
    \begin{align*}
        \| g \|_{L^1(T)} \leq C_{0,1}^\text{sz} h_{T}^2 \| \nabla G\|_{L^1(\omega_T)},
    \end{align*}
    for all $T \in \mathcal{S}_{0}:= \{K \in \mathcal{S}_{\mathcal{T}_h} : \omega_K \cap T_0 \neq \emptyset \}$.
    Setting $\underline{h} := \min_{K \in \mathcal{S}_{\mathcal{T}_h}} \left\{ h_K \right\}$ as above, 
    we observe
    \begin{align*}
         \sum_{K \in \mathcal{S}_{\mathcal{T}_h}} \| g \|_{L^1(K)} &\leq  \sum_{K \in {\mathcal{S}_{\mathcal{T}_h}\setminus \mathcal{S}_0}} C_{0,2}^\text{sz} h_K^2 \| D^2 G\|_{L^1(\omega_K)} +  \sum_{T \in \mathcal{S}_0} C_{0,1}^\text{sz} h_T \| \nabla G\|_{L^1(\omega_T)} \\
         &\leq C_\text{ol} C_{0,2}^\text{sz} h^2 \| D^2 G\|_{L^1(\mathbb{T}^d \setminus B_{\underline{h}}(x))} +  C_{0,1}^\text{sz} h \| \nabla G\|_{L^1(B_{3h}(x))} \leq \alpha_{h},
    \end{align*}
    where in the last step, we used Lemma \ref{Linf:Gbounds}.
    Similarly, together with the local trace inequality, we find
    \begin{align*}
        \sum_{F \in S_h} \| g \|_{L^1(F)} &\leq  \tilde{c}_\text{Tr}\sum_{K \in \mathcal{S}_{\mathcal{T}_h}}\left( \| \nabla g\|_{L^1(K)} + h_K^{-1} \| g \|_{L^1(K)} \right) \\
        &\leq \tilde{c}_\text{Tr} C_\text{ol}(C_{1,2}^\text{sz} + C_{0,2}^\text{sz}) h \| D^2 G\|_{L^1(\mathbb{T}^d \setminus B_{\underline{h}}(x))}  + (C_{1,1}^\text{sz} + C_{0,1}^\text{sz})\| \nabla G\|_{L^1(B_{3h}(x))} \\
        &\leq \beta_h.
    \end{align*}
    This implies the desired estimator.
\end{proof}
\noindent We finally have
\begin{Corollary}[Vector-valued error estimates]
    Let $d\in \{2,3\}$. Let $q \in L^\infty(\mathbb{T}^d,\mathbb{R}^d)$ be the solution to (\ref{Linf:pb}) and $q_h = (q_{h}^1, \ldots, q_{h}^d) \in [P_1(\mathcal{S}_{\mathcal{T}_h})]^d$ its P1 finite element approximation.
    Then
    \begin{align*}
        \| q_{h,a} - q\|_{L^\infty(\mathbb{T}^d)} \leq \bigg(\sum_{i=1}^d \eta_\infty^\text{sc}(q_{h,a}^i)^2 \bigg)^{1/2} =: \eta_\infty,
    \end{align*}
    where $\eta_\infty^\text{sc}(\cdot)$ is defined as in the Theorem above.
\end{Corollary}
\subsection{Closed form of the residual estimator in Theorem \ref{main_thm}}
\label{closedform}
We have shown in Subsection~\ref{residual_esti} that
\begin{align*}
    \int_0^T \norm{R_{\tilde{\rho}_a}}^2_{(H^1(\mathbb{T}^d))^\prime} \ dt \leq \sum_{n=0}^{N_t-1}  \Delta t^n \bigg(\Theta_{a,n} + \sum_{K \in \mathcal{T}_h} \Theta_{K,n}^2\bigg),
\end{align*}
where for $n = 1, \ldots, N_t -1$, $\Theta_{a,n} + \sum_{K \in \mathcal{T}_h} \Theta_{K,n}^2$ is defined by summing the bounds of Lemmas~\ref{diff-res}, \ref{time-res}, \ref{conv-res} and \ref{alg-res} using (\ref{locres}). 
For $n=0$, $\Theta_{a,0} + \sum_{K \in \mathcal{T}_h} \Theta_{K,0}^2$ is defined by summing the bounds of Lemmas~\ref{diff-res0}, \ref{time-res0}, \ref{conv-res0} and \ref{alg-res} using (\ref{locres}) as well as (\ref{earlyextra}).
\noindent Explicitly, for $n = 1, \ldots, N_t -1$, this means 
\begin{align*}
     \Theta_{K,n}^2 &:= c_P^2 \|\frac{\tilde{\rho}_a^{n+1}-\tilde{\rho}_a^{n}}{\Delta t^n} + \divergence\left( \tilde{\rho}_a^{n+1} \nabla c_{h,a}^n \right) - \Delta \tilde{\rho}_a^{n+1}\|_{L^2(T)}^2  h_{K_T}^2 
    + c_R^2 \sum_{F \in F_K} h_F \norm{\llbracket \nabla \tilde{\rho}_a^{n}\cdot n_{K,F} \rrbracket}_{L^2(F)}^2 \\
    &+ \|\frac{\tilde{\rho}_a^{n+1}-\tilde{\rho}_a^{n}}{\Delta t^n} - \frac{\rho_{K,a}^{n+1}-\rho_{K,a}^{n}}{\Delta t^n}\|_{L^2(K)}^2
    + |K| \left(\frac{\rho_{K,a}^{n+1}-\rho_{K,a}^{n}}{\Delta t^n} - \frac{\rho_{K,a}^{n}-\rho_{K,a}^{n-1}}{\Delta t^{n-1}}\right)^2 \\
    & + C_{\text{ell}}\big(\left(\norm{\tilde{\rho}_a^{n+1}}_{L^\infty} + \norm{\tilde{\rho}_a^{n} - \tilde{\rho}_a^{n+1}}_{L^\infty} \right)\norm{\tilde{\rho}_a^{n} - \tilde{\rho}_a^{n+1}}_{L^2(K)} + \norm{\tilde{\rho}^{n}_a}_{L^\infty}  \norm{\tilde{\rho}_a^{n-1} - \tilde{\rho}_a^{n}}_{L^2(K)}\big) \\
    &+ {\tilde{c}_R}^2 \sum_{T \in \mathcal{C}_{\mathcal{S}_{\mathcal{T}_h}}^{K} }\sum_{S \in C_T^{S_h}} h_S \norm{\llbracket \nabla c_{h,a}^{n-1} \cdot n_{T,S} \rrbracket \tilde{\rho}_a^{n} }_{L^2(S)}^2  \\
    &+ {\tilde{c}_R}^2 \sum_{T \in \mathcal{C}_{\mathcal{S}_{\mathcal{T}_h}}^{K} }\sum_{F \in C_T^{F_h}} h_F \norm{\big(\nabla c_{h,a}^{n-1} \cdot n_{T,F}\big) \big(\tilde{\rho}_a^{n} - \frac{1}{|F|} \mathcal{C}_F(\rho_{h,a}^{n-1})\big)}_{L^2(F)}^2  \\
    &+ \norm{\tilde{\rho}_a^{n}}_{L^\infty} \bigg( \sum_{\substack{T \in \mathcal{S}_{K}}} \bigg( h_T^2\norm{c_{h,a}^{n-1} - \Pi_1 \tilde{\rho}_a^{n-1}}_{L^2(T)}^2 + \frac{1}{2}\sum_{F \in S_T} h_F \norm{\llbracket \nabla c_{h,a}^{n-1} \cdot n_F\rrbracket}_{L^2(F)}^2\bigg) \\
    &+ \sum_{\substack{T \in \mathcal{S}_{K}}} \big( h_{T}^2 \norm{\tilde{\rho}_a^{n-1} - \Pi_1\tilde{\rho}_a^{n-1}}_{L^2(T)}^2 + \frac43 \norm{\rho_{h,a}^{n-1} - \tilde{\rho}_a^{n-1}}_{L^2(T)}^2 \big) \bigg),
\end{align*}
where we denote $\mathcal{C}_{\mathcal{S}_{\mathcal{T}_h}}^{K} := \{ T \in \mathcal{C}_{\mathcal{S}_{\mathcal{T}_h}}^{\mathcal{T}_h} : T \subset K \}$ and $\mathcal{S}_{K} := \{ T \in \mathcal{S}_{\mathcal{T}_h} : | T \cap K |_3 > 0 \}$ for any primal element $K \in \mathcal{T}_h$,
as well as
\begin{align*}
    \Theta_{a,n} &:= 12 C_3(h) C_S \varepsilon \bigg( \sum_{K \in \mathcal{T}_h} \frac{1}{|K|} \bigg)^{1/6} \\
    &+ \tilde{C}^{\text{sz}}\bigg(\frac{(1 - 2 C_4\varepsilon)\| \tilde{\mathbb{M}}^{-1} \|_2}{1 - (2 + k \|\tilde{\mathbb{M}}\|_2\|\tilde{\mathbb{M}}^{-1}\|_2 )C_4\varepsilon} \bigg)^{1/2} \bigg( \frac{C_5(h) \varepsilon}{1-2C_5(h) \varepsilon} \| \tilde{H_2}\|_2 + \| \tilde{H}_2^a -\tilde{H}_2 \|_2\\
    & \quad\quad\quad\quad\quad\quad\quad\quad\quad\quad\quad\quad\quad\quad\quad\quad\quad\quad\quad\quad\quad\quad + \bigg(\frac{C_6(h)\varepsilon}{1-C_6(h)\varepsilon}\bigg) \|\tilde{\mathbb{S}}_2\|_2 \|C_a^{n}\|_2 \bigg) \\
    &+ \frac{\max_{K \in \mathcal{T}_h} |K|^{1/2}}{\Delta t^{n+1}} \bigg( \frac{C_1(h) \varepsilon}{1-2C_1(h) \varepsilon} \|\tilde{H}_1\|_2  + \| \tilde{H}_1^a -\tilde{H}_1 \|_2 + \frac{C_2(h) \varepsilon}{1-2C_2(h) \varepsilon} \|\tilde{\mathbb{S}}_1\|_2 \| P_a^{n+1} \|_2 \bigg) \\
    &+ \frac{\max_{K \in \mathcal{T}_h} |K|^{1/2}}{\Delta t^n} \bigg( \frac{C_1(h) \varepsilon}{1-2C_1(h) \varepsilon} \|\tilde{H}_1\|_2 + \| \tilde{H}_1^a -\tilde{H}_1 \|_2 + \frac{C_2(h) \varepsilon}{1-2C_2(h) \varepsilon} \|\tilde{\mathbb{S}}_1\|_2 \| P_a^{n} \|_2 \bigg).
\end{align*}
\noindent Additionally, for $n=0$, this means
\begin{align*}
     \Theta_{K,0}^2 &:= c_P^2 \|\frac{\tilde{\rho}_a^{1}-\tilde{\rho}_a^{0}}{\Delta t^0} + \divergence\left( \tilde{\rho}_a^{1} \nabla c_{h,a}^0 \right) - \Delta \tilde{\rho}_a^{1}\|_{L^2(T)}^2  h_{K_T}^2 \\
     &+ \norm{ (\tilde{\rho}_a^1 - \tilde{\rho}_a^0 )\nabla c_{h,a}^0 - \nabla (\tilde{\rho}_a^{1} - \tilde{\rho}_a^{0})}_{L^2(K)}^2 \\
    &+ c_R^2\sum_{F \in F_K} h_F \norm{\llbracket \nabla \tilde{\rho}_a^{0}\cdot n_{K,F} \rrbracket}_{L^2(F)}^2 
    + \|\frac{\tilde{\rho}_a^{1}-\tilde{\rho}_a^{0}}{\Delta t^0} - \frac{\rho_{K,a}^{1}-\rho_{K,a}^{0}}{\Delta t^0}\|_{L^2(K)}^2 \\
    & + C_{\text{ell}}\big(\left(\norm{\tilde{\rho}_a^{1}}_{L^\infty} + \norm{\tilde{\rho}_a^{0} - \tilde{\rho}_a^{1}}_{L^\infty} \right)\norm{\tilde{\rho}_a^{0} - \tilde{\rho}_a^{1}}_{L^2(K)} \big)\\
    &+ {\tilde{c}_R}^2 \sum_{T \in \mathcal{C}_{\mathcal{S}_{\mathcal{T}_h}}^{K} }\sum_{S \in C_T^{S_h}} h_S \norm{\llbracket \nabla c_{h,a}^{0} \cdot n_{T,S} \rrbracket \tilde{\rho}_a^{1} }_{L^2(S)}^2  \\
    &+ {\tilde{c}_R}^2 \sum_{T \in \mathcal{C}_{\mathcal{S}_{\mathcal{T}_h}}^{K} }\sum_{F \in C_T^{F_h}} h_F \norm{\big(\nabla c_{h,a}^{0} \cdot n_{T,F}\big) \big(\tilde{\rho}_a^{1} - \frac{1}{|F|} \mathcal{C}_F(\rho_{h,a}^{0})\big)}_{L^2(F)}^2  \\
    &+ \norm{\tilde{\rho}_a^{1} - \tilde{\rho}_a^{0}}_{L^\infty} \bigg( \sum_{\substack{T \in \mathcal{S}_{K}}} \bigg( h_T^2\norm{c_{h,a}^0 - \Pi_1 \tilde{\rho}_a^0}_{L^2(T)}^2 + \frac{1}{2}\sum_{F \in S_T} h_F \norm{\llbracket \nabla c_{h,a}^0 \cdot n_F\rrbracket}_{L^2(F)}^2\bigg) \\
    &+ \sum_{\substack{T \in \mathcal{S}_{K}}} \big( h_{T}^2 \norm{\tilde{\rho}_a^0 - \Pi_1\tilde{\rho}_a^0}_{L^2(T)}^2 + \frac43 \norm{\rho_{h,a}^0 - \tilde{\rho}_a^0}_{L^2(T)}^2 \big) \bigg),
\end{align*}
as well as
\begin{align*}
    \Theta_{a,0} &:= 12 C_3(h) C_S \varepsilon \bigg( \sum_{K \in \mathcal{T}_h} \frac{1}{|K|} \bigg)^{1/6}\\
    &+ \tilde{C}^{\text{sz}}\bigg(\frac{(1 - 2 C_4\varepsilon)\| \tilde{\mathbb{M}}^{-1} \|_2}{1 - (2 + k \|\tilde{\mathbb{M}}\|_2\|\tilde{\mathbb{M}}^{-1}\|_2 )C_4\varepsilon} \bigg)^{1/2} \bigg(\frac{C_5(h) \varepsilon}{1-2C_5(h) \varepsilon} \| \tilde{H_2}\|_2 + \| \tilde{H}_2^a -\tilde{H}_2 \|_2\\ 
    &\quad\quad\quad\quad\quad\quad\quad\quad\quad\quad\quad\quad\quad\quad\quad\quad\quad\quad\quad\quad\quad\quad + \bigg(\frac{C_6(h)\varepsilon}{1-C_6(h)\varepsilon}\bigg) \|\tilde{\mathbb{S}}_2\|_2 \|C_a^{0}\|_2 \bigg) \\
    &+ \frac{\max_{K \in \mathcal{T}_h} |K|^{1/2}}{\Delta t^0} \bigg( \frac{C_1(h) \varepsilon}{1-2C_1(h) \varepsilon} \|\tilde{H}_1\|_2 + \| \tilde{H}_1^a -\tilde{H}_1 \|_2 + \frac{C_2(h) \varepsilon}{1-2C_2(h) \varepsilon} \|\tilde{\mathbb{S}}_1\|_2 \| P_a^{0} \|_2 \bigg).
\end{align*}

% \cleardoublepage
% \phantomsection
% \addcontentsline{toc}{chapter}{Bibliography}
%\bibliography{mybib}

\nocite{Hang2015}
\printbibliography

\end{document}